\documentclass[12pt,envcountsame]{svmult}
\date{2.9.2010} 
%

%
\usepackage{amsmath,amssymb,amsfonts,euscript,mathrsfs,multirow,amscd}


\newcommand{\pmat}[1]{\begin{bmatrix} #1 \end{bmatrix}}
\newcommand{\z}{\mathfrak z} 
\newcommand{\fp}{\mathfrak p} 
\newcommand{\fq}{\mathfrak q} 
 
\newcommand{\fk}{\mathfrak k} 
\newcommand{\fu}{\mathfrak u} 
\newcommand{\fz}{\mathfrak z} 
\newcommand{\fpsq}{\mathfrak{psq}} 
\newcommand{\fsq}{\mathfrak{sq}} 
\newcommand{\fsu}{\mathfrak{su}} 

\newcommand{\tr}{\mathop{{\rm tr}}\nolimits}

\usepackage{amsthm}

\renewcommand{\tilde}{\widetilde} 
\renewcommand{\hat}{\widehat} 
\newcommand{\eset}{\emptyset} 
\newcommand{\oline}{\overline} 
\newcommand{\cG}{\mathcal G} 
\newcommand{\cH}{\mathscr H} 
\newcommand{\cD}{\mathscr D} 
\newcommand{\cK}{\mathscr K} 
\newcommand{\sZ}{\mathscr Z} 
\newcommand{\sU}{\mathscr U} 
\newcommand{\sV}{\mathscr V}

\newcommand{\R}{\mathbb R}
\newcommand{\N}{\mathbb N}
\newcommand{\C}{\mathbb C}

\newcommand{\g}{\mathfrak g} 
\newcommand{\ft}{\mathfrak t} 
\newcommand{\h}{\mathfrak h} 
 
\newcommand{\fsl}{\mathfrak{sl}} 
\newcommand{\fup}{\mathfrak{up}} 

\newcommand{\dd}{\mathsf{d}}
\renewcommand{\:}{\colon}

\newcommand{\Hol}{\mathpzc{Hol}}
\newcommand{\ad}{\mathop{{\rm ad}}\nolimits}
\newcommand{\ev}{\mathop{{\rm ev}}\nolimits}

\newcommand{\subeq}{\subseteq} 


\newcommand{\germ}{\mathfrak}
\newcommand{\eev}{{\overline{0}}}
\newcommand{\ood}{{\overline{1}}}
\newcommand{\RES}{\mathrm{Res}}
\newcommand{\IND}{\mathrm{Ind}}
\newcommand{\CONE}{\mathpzc{Cone}}
\newcommand{\STAR}{$\star$}
\newcommand{\INT}{\mathpzc{Int}}
\newcommand{\II}{\mathrm{I}}
\newcommand{\JJ}{\mathrm{J}}
\newcommand{\eps}{\varepsilon}

\numberwithin{theorem}{subsection}


\title*{Lie supergroups, unitary representations, and invariant cones}
\author{Karl-Hermann Neeb \and Hadi Salmasian}
\institute{
K.--H. Neeb \at Department Mathematik, Friedrich--Alexander--Universit\"{a}t Erlangen--N\"{u}rnberg,
Bismarckstra\ss e $1\frac{1}{2}$, 91054, Erlangen, Germany, 
\email{neeb@mi.uni-erlangen.de}
\and 
H. Salmasian \at Department of Mathematics and Statistics, University of Ottawa, 585 King Edward Ave., Ottawa, ON K1N 6N5, Canada, \email{hsalmasi@uottawa.ca}
}
\date{July 12, 2010}
\DeclareFontFamily{OT1}{pzc}{}
\DeclareFontShape{OT1}{pzc}{m}{it}{<-> s * [1.100] pzcmi7t}{}
\DeclareMathAlphabet{\mathpzc}{OT1}{pzc}{m}{it}


\begin{document}

\maketitle

\section{Introduction}
The goal of this article is twofold. First,
it presents an application of the theory of invariant convex cones
of Lie algebras to the study of unitary representations of
Lie supergroups.
Second, it provides an exposition of recent
results of the second author on the classification of irreducible unitary 
representations of nilpotent 
Lie supergroups using the method of orbits.

In relation to the first goal, 
it is shown that there is a close connection between unitary 
representations of Lie supergroups and 
dissipative 
unitary representations of Lie groups (in the sense of \cite{Ne00}).  
It will be shown that  
for a large class of Lie supergroups  
the only irreducible unitary representations 
are
highest weight modules in a suitable sense.
This circle of ideas leads to explicit 
necessary conditions 
for determining when a Lie supergroup has faithful unitary representations.
These necessary conditions are then
used to analyze the situation
for simple and semisimple Lie supergroups.

Pertaining to the second goal,
the main results in \cite{salmasian} are explained
in a more reader friendly style. Complete proofs of the results are
given in \cite{salmasian}, and will not be repeated. However, wherever appropriate, 
ideas of the proofs are sketched.

\begin{acknowledgement}
K.--H. Neeb was supported by DFG-grant NE 413/7-1, Schwerpunktprogramm
``Darstellungstheorie''. H. Salmasian was supported by an NSERC Discovery Grant 
and an Alexander von Humboldt Fellowhip for Experienced Researchers.
\end{acknowledgement}

\section{Algebraic background}

We start by introducing the notation and stating several
facts which are 
used in this article. The reader is assumed to be familiar
with basics of the theory of superalgebras, and therefore this section
is rather terse.
For more detailed accounts of the subject
the reader is referred to  \cite{berezin}, \cite{kac}, or \cite{schbook}.

Let $\mathbb S$ be an arbitrary associative unital ring. A 
possibly nonassociative $\mathbb S$-algebra $\germ s$ is called a
\emph{superalgebra} if it is
$\mathbb Z_2$-graded, i.e., 
$
\germ s=\germ s_\eev\oplus\germ s_\ood
$
where $\germ s_i\germ s_j\subseteq\germ s_{i+j}$.
The degree of a homogeneous 
element $a\in\germ s$ is denoted by $|a|$.

A superalgebra
$\germ s$ is called  
\emph{supercommutative}
if 
\[
ab=(-1)^{|a|\cdot|b|}ba
\] 
for every two homogeneous elements $a,b\in\germ s$.

A \emph{Lie superalgebra} is a superalgebra whose multiplication, usually called its 
\emph{superbracket}, satisfies graded analogues of antisymmetry and 
the Jacobi identity. This means that if $A,B,C$ are homogeneous elements of a Lie superalgebra, 
then 
\begin{gather*}
[A,B]=-(-1)^{|A|\cdot |B|}[B,A]\\
\intertext{and}
(-1)^{|A|\cdot |C|}[A,[B,C]]+(-1)^{|B|\cdot |A|}[B,[C,A]]+(-1)^{|C|\cdot |B|}[C,[A,B]] =0.
\end{gather*}

Let $\mathbb K$ be a field and $\germ g$ be a Lie superalgebra over $\mathbb K$.
If $\germ h$ is a Lie subsuperalgebra of $\germ g$  
then $\mathscr Z_\germ g(\germ h)$ denotes
the \emph{supercommutant} of $\germ h$ in $\germ g$, i.e.,
\[
\mathscr Z_\germ g(\germ h)=\{\,X\in\germ g\ |\ [\germ h,X]=\{0\}\,\}.
\]
The \emph{center} of $\germ g$ is the supercommutant of $\germ g$ in 
$\germ g$ and 
is denoted by $\mathscr Z(\germ g)$. The \emph{universal enveloping algebra}
of $\germ g$, which is defined in \cite[Sec. 1.1.3]{kac}, is denoted by
$\mathscr U(\germ g)$. 
The group of $\mathbb K$-linear automorphisms of 
$\germ g$ is denoted by $\mathrm{Aut}(\germ g)$.
Finally, recall that the definitions of nilpotent and solvable Lie superalgebras are 
the same as the ones for Lie algebras (see \cite[Sec 1]{kac}).

\subsection{Centroid, derivations, and differential constants}
Let $\mathbb K$ be an arbitrary field and $\germ s$ be a finite dimensional
superalgebra over  $\mathbb K$. 
The \emph{multiplication algebra} of $\germ s$, denoted by
$\mathscr M(\germ s)$, is 
the associative unital superalgebra over $\mathbb K$ which is 
generated by the elements $\mathrm R_x$ and $\mathrm L_x$
of $\mathrm{End}_{\mathbb K}(\germ s)$, for all homogeneous $x\in\germ s$, where 
\[
\mathrm L_x(y)=xy
\textrm{\ \ and\ \ }
\mathrm R_x(y)=(-1)^{|x|\cdot|y|}yx
\textrm{\ \ for every homogeneous }y\in\germ s.
\]
As usual, the superbracket on $\mathrm{End}_\mathbb K(\germ s)$ is
defined by
\[
[A,B]=AB-(-1)^{|A|\cdot|B|}BA
\]
for homogeneous elements $A,B\in\mathrm{End}_\mathbb K(\germ s)$, and is then extended to
$\mathrm{End}_\mathbb K(\germ s)$ by linearity.
The \emph{centroid} of $\germ s$, denoted by $\mathscr C(\germ s)$, is
the supercommutant of $\mathscr M(\germ s)$ in $\mathrm{End}_\mathbb K(\germ s)$, i.e, 
\[
\mathscr C(\germ s)=\big\{\ A\in\mathrm{End}_\mathbb K(\germ s)\ \big|
\  [A,B]=0\textrm{ for every }B\in\mathscr M(\germ s)\ \big\}.
\] 
Obviously $\mathscr C(\germ s)$ is a unital associative superalgebra over $\mathbb K$.
If $\germ s^2=\germ s$ then $\mathscr C(\germ s)$ is supercommutative
(see \cite[Prop. 2.1]{cheng} for a proof).

If $s\in\{\overline 0,\overline 1\}$, a  \emph{homogeneous derivation of degree $s$} of $\germ s$ is an element $D\in\mathrm{End}_\mathbb K(\germ s)$
such that for every two homogeneous elements $a,b\in \germ s$,
\[
D(ab)=D(a)b+(-1)^{|a|\cdot s}aD(b).
\]
The subspace of $\mathrm{End}_\mathbb K(\germ s)$ which is spanned by homogeneous 
derivations of $\germ s$ is a Lie superalgebra over $\mathbb K$
and is denoted by $\mathrm{Der}_\mathbb K(\germ s)$. 
The \emph{ring of differential constants}, denoted by $\mathscr R(\germ s)$, is 
the supercommutant of $\mathrm{Der}_\mathbb K(\germ s)$ 
in $\mathscr C(\germ s)$.

Suppose that $\germ s$ is \emph{simple}, i.e., $\germ s^2\neq\{0\}$ and 
$\germ s$ does not have proper two-sided ideals.
By Schur's Lemma every nonzero homogeneous element of $\mathscr C(\germ s)$ is
invertible. Since $\germ s^2$ is always a  two-sided ideal, $\germ s^2=\germ s$ and therefore  
$\mathscr C(\germ s)$ is supercommutative. 
It follows that $\mathscr C(\germ s)_\ood=\{0\}$, $\mathscr C(\germ s)_\eev$ is a 
field, and  
$\mathscr R(\germ s)$ is a subfield of
$\mathscr C(\germ s)_\eev$ containing $\mathbb K$.

\subsection{Derivations of base extensions}
Let $\mathbb K$ be a field of characteristic zero and
$\mathbf \Lambda(n,\mathbb K)$ be
the \emph{Gra\ss mann superalgebra over $\mathbb K$ in $n$ indeterminates},
i.e., the associative unital superalgebra over $\mathbb K$ generated by odd elements $\xi_1,\ldots,\xi_n$
modulo the relations 
\[
\xi_i\xi_j+\xi_j\xi_i=0\textrm{\ \ for every }1\leq i,j\leq n.
\]

Let $\germ s$ be a superalgebra over $\mathbb K$.
The tensor product $\germ s\otimes_\mathbb K\mathbf \Lambda(n,\mathbb K)$ is a superalgebra
over $\mathbb K$. 
Note that since $\mathbf \Lambda(n,\mathbb K)$ is supercommutative,
if $\germ s$ is a Lie superalgebra then so is 
$\germ s\otimes_\mathbb K\mathbf \Lambda(n,\mathbb K)$.

It is proved in \cite[Prop. 7.1]{cheng} that
\begin{equation}
\label{deriva}
\mathrm{Der}_\mathbb K\big(\germ s\otimes_\mathbb K\mathbf \Lambda(n,\mathbb K)\big)=
\mathrm{Der}_\mathbb K(\germ s)\otimes_\mathbb K\mathbf \Lambda(n,\mathbb K)+
\mathscr C(\germ s)\otimes_\mathbb K\mathbf W(n,\mathbb K)
\end{equation}
where 
\[
\mathbf W(n,\mathbb K)=\mathrm{Der}_\mathbb K\big(\mathbf \Lambda(n,\mathbb K)\big).
\] 
The right hand side of 
\eqref{deriva} acts on $\germ s\otimes_\mathbb K\mathbf \Lambda(n,\mathbb K)$ 
via
\begin{gather*}
(D_\germ s\otimes_\mathbb K a)(X\otimes_\mathbb K b)=
(-1)^{|a|\cdot |X|}D_\germ s(X)\otimes_\mathbb K ab\\
\intertext{and}
(T\otimes_\mathbb K D_\mathbf \Lambda)(X\otimes_\mathbb K a)=(-1)^{|D_\mathbf\Lambda|\cdot |X|} T(X)\otimes_\mathbb K D_\mathbf \Lambda(a).
\end{gather*}
Note that the right hand side of \eqref{deriva} is indeed a direct sum of the two summands. This 
follows from the fact that
every element of 
$\mathscr C(\germ s)\otimes_\mathbb K \mathbf W(n,\mathbb K)$ vanishes 
on $\germ s\otimes_\mathbb K 1_{\mathbf \Lambda(n,\mathbb K)}$, whereas an element of 
$\mathrm{Der}_\mathbb K(\germ s)\otimes_\mathbb K\mathbf \Lambda(n,\mathbb K)$
which vanishes on $\germ s\otimes_\mathbb K 1_{\mathbf \Lambda(n,\mathbb K)}$ must be
zero.

\subsection{Cartan subsuperalgebras} Let $\mathbb K$ be a field of characteristic zero
and $\germ g$ be a finite dimensional Lie superalgebra over $\mathbb K$. 
A Lie subsuperalgebra of $\germ g$ which is 
nilpotent and self normalizing is called a \emph{Cartan subsuperalgebra}.

An important property 
of Cartan subsuperalgebras of $\germ g$ is that 
they are
uniquely determined by their intersections with $\germ g_\eev$. Our next goal is to state this
fact more formally.

For every subset $\mathpzc W_\eev$ of $\germ g_\eev$, let
\[
\mathscr N_\germ g(\mathpzc W_\eev)=
\big\{\, X\in\germ g\,\big|\,\textrm{ for every }W\in \mathpzc W_\eev,\,
\textrm{if }k\gg 0\textrm{ then }
\mathrm{ad}(W)^k(X)=0
\,\big\}.
\]
One can easily prove that $\mathscr N_\germ g(\mathpzc W_\eev)$ is indeed a subsuperalgebra of $\germ g$.
The next proposition is stated in \cite[Prop. 1]{schpaper} (see also \cite[Prop. 1]{penkovserganova}).
\begin{proposition}
\label{proppenkovserganova}
If $\germ h=\germ h_\eev\oplus\germ h_\ood$ is a Cartan subsuperalgebra of $\germ g$
then $\germ h_\eev$ is a Cartan subalgebra of $\germ g_\eev$. Conversely, if $\germ h_\eev$ is a 
Cartan subalgebra of $\germ g_\eev$ then $\mathscr N_\germ g(\germ h_\eev)$ is a Cartan subsuperalgebra of 
$\germ g$. The correspondence
\[
\germ h_\eev\longleftrightarrow\mathscr N_\germ g(\germ h_\eev) 
\]
is a bijection between Cartan subalgebras of $\germ g_\eev$ and 
Cartan subsuperalgebras of $\germ g$.  
\end{proposition}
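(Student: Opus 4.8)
The plan is to realize $\mathscr N_\germ g(\germ h_\eev)$ as the \emph{Fitting null component} (generalized $0$-weight space) of the adjoint action of the nilpotent Lie algebra $\germ h_\eev$ on $\germ g$, and to use the standard characterization that a nilpotent subalgebra $\germ c\subseteq\germ g_\eev$ is a Cartan subalgebra precisely when it coincides with its own Fitting null component $\germ g_\eev^0(\germ c)=\{X\in\germ g_\eev\mid \mathrm{ad}(W)^kX=0 \text{ for } k\gg 0,\ \forall W\in\germ c\}$. Writing $\Phi(\germ c)=\mathscr N_\germ g(\germ c)$ and $\Psi(\germ h)=\germ h_\eev$, the asserted bijection follows once I establish two facts: (i) if $\germ c$ is a Cartan subalgebra of $\germ g_\eev$ then $\mathscr N_\germ g(\germ c)$ is a Cartan subsuperalgebra with $\mathscr N_\germ g(\germ c)_\eev=\germ c$; and (ii) if $\germ h$ is a Cartan subsuperalgebra then $\germ h=\mathscr N_\germ g(\germ h_\eev)$. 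The single recurring tool is the identity $(\mathrm{ad}\,x)^2=\frac12\,\mathrm{ad}[x,x]$, valid for odd $x$, which converts assertions about odd elements into assertions about the even element $[x,x]$.

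For (i), set $\germ n=\mathscr N_\germ g(\germ c)$; it is a subsuperalgebra by the remark preceding the proposition, and $\germ n_\eev=\germ g_\eev^0(\germ c)=\germ c$ because $\germ c$ is Cartan. To see that $\germ n$ is nilpotent I would invoke the super-Engel theorem: a finite-dimensional Lie superalgebra is nilpotent as soon as every homogeneous element is $\mathrm{ad}$-nilpotent. Every $W\in\germ n_\eev=\germ c$ is $\mathrm{ad}$-nilpotent on $\germ n$ by the very definition of the Fitting null component, and for odd $x\in\germ n$ the identity $(\mathrm{ad}\,x)^2=\frac12\,\mathrm{ad}[x,x]$ with $[x,x]\in\germ c$ shows $\mathrm{ad}\,x$ is $\mathrm{ad}$-nilpotent as well. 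For self-normalization, if homogeneous $Y$ satisfies $[Y,\germ n]\subseteq\germ n$ then $\mathrm{ad}(W)Y\in\germ n$ for each $W\in\germ c$, and since $\mathrm{ad}(W)^m|_\germ n=0$ for some $m$ we get $\mathrm{ad}(W)^{m+1}Y=0$; thus $Y\in\mathscr N_\germ g(\germ c)=\germ n$. Hence $\germ n$ equals its own normalizer and is a Cartan subsuperalgebra.

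For (ii), let $\germ h$ be a Cartan subsuperalgebra and $\germ n=\mathscr N_\germ g(\germ h_\eev)$. Since $\germ h$ is a nilpotent superalgebra, every homogeneous element of $\germ h$ is $\mathrm{ad}$-nilpotent on $\germ h$; in particular each $W\in\germ h_\eev$ is $\mathrm{ad}$-nilpotent on $\germ h$, so $\germ h\subseteq\germ n$. The crucial point is that $\germ h$ acts $\mathrm{ad}$-nilpotently not merely on itself but on all of $\germ n$: even elements do so by definition of $\germ n$, and odd elements do so again by $(\mathrm{ad}\,x)^2=\frac12\,\mathrm{ad}[x,x]$ with $[x,x]\in\germ h_\eev$. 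Consequently $\germ h$ acts nilpotently on the graded finite-dimensional module $\germ n/\germ h$. If $\germ h\subsetneq\germ n$, the representation-theoretic super-Engel theorem yields a nonzero homogeneous $\overline X\in\germ n/\germ h$ annihilated by $\germ h$, i.e.\ a homogeneous $X\in\germ n\setminus\germ h$ with $[\germ h,X]\subseteq\germ h$. Then $X$ lies in the normalizer of $\germ h$ in $\germ g$, contradicting self-normalization. Hence $\germ h=\germ n=\mathscr N_\germ g(\germ h_\eev)$; passing to even parts gives $\germ h_\eev=\germ g_\eev^0(\germ h_\eev)$, which together with the nilpotency of $\germ h_\eev$ (immediate from that of $\germ h$) shows $\germ h_\eev$ is a Cartan subalgebra of $\germ g_\eev$.

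Finally (i) and (ii) assemble into the bijection, since (ii) gives $\Phi(\Psi(\germ h))=\mathscr N_\germ g(\germ h_\eev)=\germ h$ while (i) gives $\Psi(\Phi(\germ c))=\mathscr N_\germ g(\germ c)_\eev=\germ c$. I expect the main obstacle to be exactly the odd directions in (ii): knowing only that $\germ h_\eev$ fixes a candidate coset is not enough, and one must upgrade this to annihilation by the whole of $\germ h$. The device that makes everything work is the reduction of odd $\mathrm{ad}$-nilpotency to the even element $[x,x]$ via $(\mathrm{ad}\,x)^2=\frac12\,\mathrm{ad}[x,x]$, which lets the full superalgebra $\germ h$—and not merely $\germ h_\eev$—act nilpotently on $\germ n$, so that the genuinely super form of Engel's theorem can be applied to $\germ n/\germ h$.
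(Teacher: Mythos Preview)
Your argument is correct. The paper itself does not supply a proof of this proposition; it merely cites Scheunert \cite[Prop.~1]{schpaper} and Penkov--Serganova \cite[Prop.~1]{penkovserganova}. What you have written is essentially the standard proof found in those references: interpret $\mathscr N_\germ g(\germ h_\eev)$ as the Fitting null component for the $\germ h_\eev$-action, use the classical characterization of Cartan subalgebras as nilpotent subalgebras equal to their own Fitting null component, and reduce the odd part via $(\ad x)^2=\tfrac12\ad[x,x]$ so that the super-Engel theorem applies. Each step is sound, and the two-sided inverse $\Phi\circ\Psi=\mathrm{id}$, $\Psi\circ\Phi=\mathrm{id}$ is correctly assembled.

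One small presentational point: in your self-normalization step for (i), when you write ``since $\mathrm{ad}(W)^m|_{\germ n}=0$ for some $m$,'' you are implicitly using finite-dimensionality to pass from pointwise nilpotence (built into the definition of $\mathscr N_\germ g$) to a uniform exponent on $\germ n$. This is of course fine, but worth making explicit.
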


\subsection{Compactly embedded subalgebras}
Let $\germ g$ be a finite dimensional Lie superalgebra over $\mathbb R$.
The group $\mathrm{Aut}(\germ g)$ is a (possibly disconnected) Lie subgroup of 
$\mathrm{GL}(\germ g)$, the group
of invertible elements of $\mathrm{End}_\mathbb R(\germ g)$. 
The subgroup of $\mathrm{Aut}(\germ g)$
generated by $e^{\mathrm{ad}(\germ g_\eev)}$ is denoted by $\mathrm{Inn}(\germ g)$.

If $\germ h_\eev$ is a Lie subalgebra of $\germ g_\eev$ then
$\mathrm{INN}_\germ g(\germ h_\eev)$
denotes the closure in $\mathrm{Aut}(\germ g)$ of the subgroup generated by 
$e^{\mathrm{ad}(\germ h_\eev)}$.
When $\mathrm{INN}_\germ g(\germ h_\eev)$ is compact $\germ h_\eev$ is said to be
\emph{compactly embedded
in $\germ g$}.

Cartan subalgebras of $\germ g_\eev$ which are compactly embedded in $\germ g$ 
are especially interesting because
they yield root decompositions of the complexification of $\germ g$.
The next proposition states this fact formally.
In the next proposition, let $\tau$ denote
the usual complex conjugation of elements of 
$\germ g^\mathbb C=\germ g\otimes_\mathbb R\mathbb C$, i.e., $\tau(X+iY)=X-iY$ for every
$X,Y\in\germ g$.

\begin{proposition}
\label{propsupercompactcartan}
Let $\germ t_\eev$ be a Cartan subalgebra of $\germ g_\eev$ 
which is compactly embedded in $\germ g$. 
Then the following statements hold.
\begin{description}[iii]
\item[\rm(i)] $\germ t_\eev$ is abelian.
\item[\rm(ii)] One can decompose $\germ g^\mathbb C$ as  
\begin{equation}
\label{rootdecomp}
\germ g^\mathbb C=\bigoplus_{\alpha\in\Delta}\germ g^{\mathbb C,\alpha}
\end{equation}
where
\[
\Delta=\big\{\,\alpha\in \germ t_\eev^*\ \big|\ \germ g^{\mathbb C,\alpha}\neq\{0\}\,\big\}
\]
and
\[
\germ g^{\mathbb C,\alpha}=\big\{\,X\in\germ g^\mathbb C\,\big|\,[H,X]=i\alpha(H)X\textrm{ for every }
H\in\germ t_\eev\,\big\}.
\]
\item[\rm(iii)]
If $\alpha\in\Delta$ then $-\alpha\in\Delta$ as well, and if 
$X\in\germ g^{\mathbb C,\alpha}$ then $\tau(X)\in\germ g^{\mathbb C,-\alpha}$.
\item[\rm(iv)] $\germ g_\eev=\germ t_\eev\oplus[\germ t_\eev,\germ g_\eev]$.
\end{description}
\end{proposition}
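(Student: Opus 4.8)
The plan is to exploit the compactness of $T := \mathrm{INN}_\germ g(\germ t_\eev)$ to linearize everything. Since $T$ is a compact subgroup of $\mathrm{GL}(\germ g)$, averaging an arbitrary inner product over the normalized Haar measure of $T$ produces a $T$-invariant positive definite inner product on the real vector space $\germ g$. With respect to it every element of $T$ is orthogonal, so the Lie algebra of $T$ consists of skew-symmetric operators. For each $H\in\germ t_\eev$ one has $e^{t\,\mathrm{ad}(H)}=e^{\mathrm{ad}(tH)}\in T$ for all $t\in\R$, whence $\mathrm{ad}(H)$ lies in the Lie algebra of $T$ and is therefore skew-symmetric; consequently $\mathrm{ad}(H)$ is semisimple and, after complexification, has purely imaginary spectrum. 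This single observation is the engine behind all four assertions, and securing it is really the only nontrivial input; the remaining steps are linear algebra together with the defining properties of a Cartan subalgebra.

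For (i) I would combine semisimplicity with nilpotency. Being a Cartan subalgebra, $\germ t_\eev$ is nilpotent, so $\mathrm{ad}(H)|_{\germ t_\eev}$ is a nilpotent operator for every $H\in\germ t_\eev$. But it is also semisimple by the previous step, and an operator that is simultaneously nilpotent and semisimple vanishes; hence $[\germ t_\eev,\germ t_\eev]=0$, i.e. $\germ t_\eev$ is abelian. For (ii), now that $\germ t_\eev$ is abelian the family $\{\mathrm{ad}(H):H\in\germ t_\eev\}$ consists of commuting semisimple operators and is therefore simultaneously diagonalizable on $\germ g^\mathbb C$. On each joint eigenspace the map sending $H$ to the corresponding eigenvalue of $\mathrm{ad}(H)$ is linear and, by skew-symmetry, purely imaginary, so it has the form $H\mapsto i\alpha(H)$ for a unique $\alpha\in\germ t_\eev^*$; collecting the eigenspaces yields the decomposition \eqref{rootdecomp} with $\Delta$ as defined.

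For (iii) I would use that $\germ t_\eev\subseteq\germ g$ is real, so $\mathrm{ad}(H)$ commutes with the conjugation $\tau$ in the sense $\tau([H,X])=[H,\tau(X)]$. If $X\in\germ g^{\mathbb C,\alpha}$, applying $\tau$ to $[H,X]=i\alpha(H)X$ and using that $\alpha(H)$ is real gives $[H,\tau(X)]=-i\alpha(H)\tau(X)$, so $\tau(X)\in\germ g^{\mathbb C,-\alpha}$; since $\tau$ is an antilinear bijection, $\germ g^{\mathbb C,-\alpha}\neq\{0\}$ and thus $-\alpha\in\Delta$. Finally, for (iv), intersecting the root decomposition with the $\mathrm{ad}(\germ t_\eev)$-invariant subspace $\germ g_\eev^\mathbb C$ gives a decomposition $\germ g_\eev^\mathbb C=\bigoplus_\alpha\germ g_\eev^{\mathbb C,\alpha}$ whose zero part is $\germ g_\eev^{\mathbb C,0}=\mathscr Z_{\germ g_\eev}(\germ t_\eev)^\mathbb C$. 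Here the Cartan conditions enter decisively: the centralizer $\mathscr Z_{\germ g_\eev}(\germ t_\eev)$ is contained in the normalizer $\{X\in\germ g_\eev:[X,\germ t_\eev]\subseteq\germ t_\eev\}$, which equals $\germ t_\eev$ because $\germ t_\eev$ is self-normalizing, while $\germ t_\eev$ abelian gives the reverse inclusion; hence $\germ g_\eev^{\mathbb C,0}=\germ t_\eev^\mathbb C$. On the other hand $[\germ t_\eev^\mathbb C,\germ g_\eev^\mathbb C]=\bigoplus_{\alpha\neq0}\germ g_\eev^{\mathbb C,\alpha}$, since on each nonzero eigenspace some $\mathrm{ad}(H)$ acts invertibly whereas $[\germ t_\eev,\germ t_\eev]=0$ annihilates the zero part. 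Thus $\germ g_\eev^\mathbb C=\germ t_\eev^\mathbb C\oplus[\germ t_\eev^\mathbb C,\germ g_\eev^\mathbb C]$, and as both summands are complexifications of real $\tau$-stable subspaces, passing to real points yields $\germ g_\eev=\germ t_\eev\oplus[\germ t_\eev,\germ g_\eev]$. The main obstacle is the initial semisimplicity-and-imaginary-spectrum claim; once that is in hand, each of (i)--(iv) is a short deduction.
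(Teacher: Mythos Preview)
Your proof is correct and follows essentially the same line as the paper, which merely cites \cite[Thm.~VII.2.2]{Ne00} for (i)--(iii) and Bourbaki for (iv); you have unpacked those references into a self-contained argument via a $T$-invariant inner product, semisimplicity of $\mathrm{ad}(H)$, and the identity $\germ t_\eev=\mathscr Z_{\germ g_\eev}(\germ t_\eev)$, which is exactly the content behind those citations.
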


\begin{proof}
The proof of 
\cite[Theorem VII.2.2]{Ne00} can be adapted to prove Parts (i), (ii), and (iii).
Part (iv) can be proved using the fact that 
$\germ t_\eev=\mathscr Z_{\germ g_\eev}(\germ t_\eev)$ 
(see \cite[Chap. VII]{bourbaki}).
\end{proof}

More generally, if $\germ g_\eev$ has a Cartan subalgebra which is compactly embedded
in $\germ g$, then
any Cartan subalgebra
of $\germ g^\mathbb C$ yields a root decomposition. This is the
content of the following proposition.
\begin{proposition}
\label{prop-symmetryofrootsystem}
Assume that $\germ g_\eev$ has a Cartan subalgebra which is compactly
embedded in $\germ g$. If $\germ h^\mathbb C$ is an arbitrary Cartan subalgebra of 
$\germ g^\mathbb C$ then 
$\germ h_\eev^\mathbb C$ is abelian and there exists a root decomposition of $\germ g^\mathbb C$
associated to $\germ h^\mathbb C$, i.e., 
\[
\germ g^\mathbb C=
\bigoplus_{\alpha\in\Delta(\germ h^\mathbb C)}
\germ g^{\mathbb C,\alpha}
\]
where 
\[
\Delta(\germ h^\mathbb C)
=
\left\{
\alpha\in(\germ h_\eev^\mathbb C)^*\ |\ 
\germ g^{\mathbb C,\alpha}\neq \{0\}\right\}
\]
and 
\[
\germ g^{\mathbb C,\alpha}=\{\ X\in\germ g^\mathbb C\ |\ [H,X]=i\alpha(H)X\ \textrm{ for every }
H\in\germ h_\eev^\mathbb C\ \}.
\]
Moreover, $\Delta(\germ h^\mathbb C)=-\Delta(\germ h^\mathbb C)$.
\end{proposition}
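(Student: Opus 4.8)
The plan is to reduce everything to the compactly embedded Cartan subalgebra provided by the hypothesis and then transport its root decomposition by a grading-preserving automorphism. First I would fix a Cartan subalgebra $\germ t_\eev$ of $\germ g_\eev$ that is compactly embedded in $\germ g$, so that Proposition \ref{propsupercompactcartan} applies. In particular $\germ t_\eev$ is abelian, its complexification $\germ t_\eev^\mathbb C$ is a Cartan subalgebra of $\germ g_\eev^\mathbb C=(\germ g^\mathbb C)_\eev$ consisting of $\mathrm{ad}$-semisimple elements, and $\germ g^\mathbb C$ splits into root spaces $\germ g^{\mathbb C,\alpha}$. Extending each $\alpha\in\germ t_\eev^*$ complex-linearly to $\germ t_\eev^\mathbb C$, one has $[H,X]=i\alpha(H)X$ for all $H\in\germ t_\eev^\mathbb C$ and $X\in\germ g^{\mathbb C,\alpha}$, and by Proposition \ref{propsupercompactcartan}(iii) this root system is symmetric, $\Delta=-\Delta$.

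Next I would invoke conjugacy of Cartan subalgebras over the algebraically closed field $\mathbb C$. By Proposition \ref{proppenkovserganova} applied to $\germ g^\mathbb C$, the given Cartan subsuperalgebra $\germ h^\mathbb C$ has even part $\germ h_\eev^\mathbb C$ which is a Cartan subalgebra of $\germ g_\eev^\mathbb C$; and any two Cartan subalgebras of the finite-dimensional complex Lie algebra $\germ g_\eev^\mathbb C$ are conjugate under $\mathrm{Inn}(\germ g_\eev^\mathbb C)$. Hence there is $\varphi\in\mathrm{Inn}(\germ g_\eev^\mathbb C)$ with $\varphi(\germ t_\eev^\mathbb C)=\germ h_\eev^\mathbb C$. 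The crucial observation is that $\varphi$ is a product of maps $e^{\mathrm{ad}(X)}$ with $X\in\germ g_\eev^\mathbb C$, and each of these is the restriction to $\germ g_\eev^\mathbb C$ of the automorphism $e^{\mathrm{ad}(X)}$ of the full Lie superalgebra $\germ g^\mathbb C$ (since $\mathrm{ad}(X)$ is an even derivation). Thus $\varphi$ extends to an even automorphism $\tilde\varphi\in\mathrm{Aut}(\germ g^\mathbb C)$ with $\tilde\varphi(\germ t_\eev^\mathbb C)=\germ h_\eev^\mathbb C$.

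With $\tilde\varphi$ in hand the three assertions follow by transport. Since $\germ t_\eev^\mathbb C$ is abelian and $\tilde\varphi$ is an automorphism, $\germ h_\eev^\mathbb C=\tilde\varphi(\germ t_\eev^\mathbb C)$ is abelian. Applying $\tilde\varphi$ to the decomposition $\germ g^\mathbb C=\bigoplus_{\alpha\in\Delta}\germ g^{\mathbb C,\alpha}$ gives $\germ g^\mathbb C=\bigoplus_{\alpha\in\Delta}\tilde\varphi(\germ g^{\mathbb C,\alpha})$, and the identity $[\tilde\varphi(H'),\tilde\varphi(X)]=\tilde\varphi([H',X])$ shows that $\tilde\varphi(\germ g^{\mathbb C,\alpha})$ is exactly the root space of $\germ h_\eev^\mathbb C$ for the functional $\beta_\alpha=\alpha\circ\tilde\varphi^{-1}|_{\germ h_\eev^\mathbb C}$, since for $H=\tilde\varphi(H')$ one gets $[H,\tilde\varphi(X)]=i\alpha(H')\tilde\varphi(X)=i\beta_\alpha(H)\tilde\varphi(X)$. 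Therefore the root decomposition with respect to $\germ h_\eev^\mathbb C$ exists with $\Delta(\germ h^\mathbb C)=\{\beta_\alpha:\alpha\in\Delta\}$. Finally, because $\alpha\mapsto\beta_\alpha$ is linear and $\Delta=-\Delta$, one has $\beta_{-\alpha}=-\beta_\alpha$, whence $\Delta(\germ h^\mathbb C)=-\Delta(\germ h^\mathbb C)$.

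I expect the main obstacle to be the extension step: one must ensure that the automorphism conjugating the two complex Cartan subalgebras can be chosen to respect the $\mathbb Z_2$-grading of $\germ g^\mathbb C$, so that it genuinely moves root spaces to root spaces. This is precisely why it matters that conjugacy of Cartan subalgebras is realized by inner automorphisms built from $\mathrm{ad}$ of \emph{even} elements, which automatically act on the whole superalgebra. The compactly embedded hypothesis then enters only once, through Proposition \ref{propsupercompactcartan}, to furnish a single reference Cartan subalgebra that is abelian and carries a symmetric root decomposition; conjugacy propagates both properties to every Cartan subsuperalgebra of $\germ g^\mathbb C$.
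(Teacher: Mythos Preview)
Your proposal is correct and follows essentially the same approach as the paper: start from the compactly embedded Cartan subalgebra $\germ t_\eev$, use Proposition~\ref{propsupercompactcartan} to obtain a symmetric root decomposition for $\germ t_\eev^{\mathbb C}$, then invoke conjugacy of Cartan subalgebras of $\germ g_\eev^{\mathbb C}$ under inner automorphisms and extend these to even automorphisms of $\germ g^{\mathbb C}$ to transport the decomposition to $\germ h^{\mathbb C}$. Your write-up is in fact more explicit than the paper's about why the inner automorphism extends (since each $e^{\mathrm{ad}(X)}$ with $X\in\germ g_\eev^{\mathbb C}$ acts on all of $\germ g^{\mathbb C}$) and about how the root spaces and the symmetry $\Delta=-\Delta$ transform under $\tilde\varphi$.
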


\begin{proof}
Let $\germ t_\eev$ be a Cartan subalgebra of $\germ g_\eev$ which is compactly embedded in 
$\germ g$. Then $\germ t_\eev^\mathbb C$ is a Cartan subalgebra of $\germ g_\eev^\mathbb C$, and 
by Proposition \ref{proppenkovserganova} it corresponds to a Cartan subsuperalgebra 
$\germ t^\mathbb C$
of $\germ g^\mathbb C$. 
Proposition \ref{propsupercompactcartan} implies that there is a root decomposition 
of $\germ g^\mathbb C$ associated
to $\germ t^\mathbb C$, and if $\Delta(\germ t^\mathbb C)$ 
denotes the corresponding set of roots  
then 
$\Delta(\germ t^\mathbb C)=-\Delta(\germ t^\mathbb C)$.
It is known that any two Cartan subalgebras of $\germ g^\mathbb C_\eev$ are conjugate under 
inner automorphisms of $\germ g_\eev^\mathbb C$. Using 
Proposition \ref{proppenkovserganova}
one can show that any two Cartan subsuperalgebras of $\germ g^\mathbb C$ are conjugate under 
the group of $\mathbb C$-linear automorphisms of  
$\germ g^\mathbb C$ generated by $e^{\mathrm{ad}(\germ g_\eev^\mathbb C)}$.
By conjugacy, the root decomposition associated to $\germ t^\mathbb C$ 
turns into one associated to $\germ h^\mathbb C$.
\end{proof}

\subsection{Simple and semisimple Lie superalgebras}

The classification of finite dimensional complex simple Lie superalgebras and their real forms is known from \cite{kac} and \cite{serganova}. Every complex simple Lie superalgebra is isomorphic 
to one of the following types.
\begin{description}[iiiiii]

\item[(i)] A Lie superalgebra of \emph{classical} type, i.e., 
$\mathbf A(m|n)$ where $m,n>0$, 
$\mathbf B(m|n)$ where $m\geq 0$ and $n>0$, 
$\mathbf C(n)$ where $n>1$, 
$\mathbf D(m|n)$ where $m>1$ and $n>0$, 
$\mathbf G(3)$, $\mathbf F(4)$, $\mathbf D(2|1,\alpha)$ where 
$\alpha\in \mathbb C\,\backslash\{0,-1\}$, $\mathbf P(n)$ where
$n>1$, or $\mathbf Q(n)$ 
where $n>1$.
\item[(ii)] A Lie superalgebra of {\it Cartan type,} i.e., $\mathbf W(n)$ where $n\geq 3$,
$\mathbf S(n)$ where $n\geq 4$, $\mathbf{\tilde{S}}(n)$ where $n$ is even and 
$n\geq 4$, or 
$\mathbf H(n)$ where $n\geq 5$.
\item[(iii)] A complex simple Lie algebra.

\end{description}

Let $\germ s$ be a finite dimensional 
real simple Lie superalgebra with nontrivial odd part, i.e.,  
$\germ s_\ood\neq\{0\}$. 
Since $\mathscr C(\germ s)$ is a finite dimensional field
exension of $\mathbb R$, we have $\mathscr C(\germ s)=\mathbb R$ or $\mathscr C(\germ s)=\mathbb C$. If 
$\mathscr C(\germ s)=\mathbb C$, then  $\germ s$ is a complex simple 
Lie superalgebra which is considered as a real Lie superalgebra. 
If $\mathscr C(\germ s)=\mathbb R$, then $\germ s$ is a real form of
the complex simple Lie superalgebra  
$\germ s\otimes_\mathbb R\mathbb C$.
The classification of these real forms 
is summarized in Table 1 at the end of this article. 

A Lie superalgebra is called \emph{semisimple} if it has no nontrivial solvable ideals.
Semisimple Lie superalgebras are not necessarily direct sums of simple 
Lie superalgebras. 
In fact the structure theory of semisimple Lie superalgebras is rather complicated. 
The following statement can be obtained by a slight modification of the
arguments in \cite{cheng}.

\begin{theorem}
\label{classification-of-semisimple}
If a real Lie superalgebra $\germ g$ is semisimple then
there exist real simple Lie superalgebras $\germ s_1,\ldots,\germ s_k$ 
and nonnegative integers $n_1,\ldots,n_k$ such that
\[
\displaystyle
\bigoplus_{i=1}^k\big(\germ s_i\otimes_{\mathbb K_i}\mathbf \Lambda(n_i,{\mathbb K_i})\big)
\subseteq 
\germ g
\subseteq 
\bigoplus_{i=1}^k
\big(\mathrm{Der}_{\mathbb K_i}(\germ s_i)
\otimes_{\mathbb K_i} 
\mathbf \Lambda(n_i,{\mathbb K_i}) +
\mathbb L_i\otimes_{\mathbb K_i} \mathbf W(n_i,{\mathbb K_i})
\big)
\]
where 
$\mathbb K_i=\mathscr R(\germ s_i)$ and $\mathbb L_i=\mathscr C(\germ s_i)$
for every $1\leq i\leq k$.

\end{theorem}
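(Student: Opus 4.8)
The plan is to study $\germ g$ through its socle and the resulting faithful adjoint action by derivations, following the line of argument in \cite{cheng} but tracking carefully the two fields $\mathscr R(\germ s_i)$ and $\mathscr C(\germ s_i)$ attached to each simple constituent over $\mathbb R$. First I would let $S$ be the socle of $\germ g$, i.e.\ the sum of all its minimal ideals, and choose a maximal family of minimal ideals whose sum is direct, writing $S=\bigoplus_{i=1}^k I_i$; distinct minimal ideals satisfy $[I_i,I_j]\subseteq I_i\cap I_j=\{0\}$. The first key point is that $\mathscr Z_\germ g(S)=\{0\}$. Indeed, $\mathscr Z_\germ g(S)$ is an ideal of $\germ g$, and if it were nonzero it would contain a minimal ideal $I$ of $\germ g$; since $I\subseteq S$ and $[I,S]=\{0\}$, we would get $[I,I]\subseteq[I,S]=\{0\}$, exhibiting a nonzero abelian (hence solvable) ideal and contradicting semisimplicity. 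Consequently the adjoint representation gives an injection $\germ g\hookrightarrow\mathrm{Der}_\mathbb R(S)$, and because each $I_i$ is an ideal of $\germ g$ the operators $\mathrm{ad}(x)$ preserve every $I_i$, so this injection in fact lands in $\bigoplus_{i=1}^k\mathrm{Der}_\mathbb R(I_i)$.

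Next I would identify the minimal ideals. Each $I_i$ is $\germ g$-irreducible under the adjoint action, and using the semisimplicity of $\germ g$ together with $\mathscr Z_\germ g(S)=\{0\}$ one shows that $I_i$ is differentiably simple in the sense of the super-analogue of Block's theorem. Invoking the structure theory of differentiably simple Lie superalgebras from \cite{cheng}, adapted to the real field, I would obtain an isomorphism $I_i\cong\germ s_i\otimes_{\mathbb K_i}\mathbf\Lambda(n_i,\mathbb K_i)$ for a real simple Lie superalgebra $\germ s_i$ and $\mathbb K_i=\mathscr R(\germ s_i)$. This identification immediately yields the left inclusion, since $\bigoplus_{i=1}^k\big(\germ s_i\otimes_{\mathbb K_i}\mathbf\Lambda(n_i,\mathbb K_i)\big)=S\subseteq\germ g$ as a sum of ideals of $\germ g$.

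For the right inclusion I would feed this description into the derivation formula \eqref{deriva} applied to each factor, namely
\[
\mathrm{Der}_{\mathbb K_i}\big(\germ s_i\otimes_{\mathbb K_i}\mathbf\Lambda(n_i,\mathbb K_i)\big)=\mathrm{Der}_{\mathbb K_i}(\germ s_i)\otimes_{\mathbb K_i}\mathbf\Lambda(n_i,\mathbb K_i)+\mathscr C(\germ s_i)\otimes_{\mathbb K_i}\mathbf W(n_i,\mathbb K_i),
\]
where $\mathscr C(\germ s_i)=\mathbb L_i$. Combining this with the embedding $\germ g\hookrightarrow\bigoplus_{i=1}^k\mathrm{Der}_\mathbb R(I_i)$ from the first paragraph would give the desired sandwich, provided one checks that the $\mathbb R$-linear derivations of $I_i$ arising from the adjoint action of $\germ g$ are exactly the $\mathbb K_i$-linear derivations described by \eqref{deriva}.

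The main obstacle I anticipate is precisely this last verification, i.e.\ the descent from $\mathbb C$, where \cite{cheng} operates, to $\mathbb R$. Over $\mathbb R$ a simple constituent can have $\mathscr R(\germ s_i)=\mathbb R$ while $\mathscr C(\germ s_i)=\mathbb C$, so one must keep the field of differential constants $\mathbb K_i$ (the base of the Grassmann factor) separate from the centroid $\mathbb L_i$ (which governs the $\mathbf W$-summand), and confirm that an $\mathbb R$-linear derivation of $\germ g$ restricted to $I_i$ automatically respects the $\mathbb K_i$-structure so that \eqref{deriva} applies. Showing that the image of $\germ g$ in $\mathrm{Der}_\mathbb R(S)$ actually lies inside $\bigoplus_i\big(\mathrm{Der}_{\mathbb K_i}(\germ s_i)\otimes_{\mathbb K_i}\mathbf\Lambda(n_i,\mathbb K_i)+\mathbb L_i\otimes_{\mathbb K_i}\mathbf W(n_i,\mathbb K_i)\big)$, rather than in some larger space of merely $\mathbb R$-linear derivations, is the crux and is exactly what makes the required modification of \cite{cheng} substantive rather than purely formal.
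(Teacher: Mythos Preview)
The paper does not actually prove this theorem; it only states that the result ``can be obtained by a slight modification of the arguments in \cite{cheng}.'' Your proposal is precisely an outline of that modification: take the socle, kill its centralizer using semisimplicity, embed $\germ g$ into the derivations of the socle, identify each minimal ideal as differentiably simple via a Block-type theorem, and then invoke \eqref{deriva}. So at the level of strategy you are doing exactly what the paper intends.

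Your identified obstacle is the right one, and it is not as severe as you fear. The point is that $\mathbb K_i=\mathscr R(\germ s_i)$ is by definition the subring of $\mathscr C(\germ s_i)$ that commutes with every $\mathbb R$-linear derivation of $\germ s_i$; hence every $D\in\mathrm{Der}_{\mathbb R}(\germ s_i)$ is automatically $\mathbb K_i$-linear, i.e.\ $\mathrm{Der}_{\mathbb R}(\germ s_i)=\mathrm{Der}_{\mathbb K_i}(\germ s_i)$. The same mechanism propagates to $I_i\cong \germ s_i\otimes_{\mathbb K_i}\mathbf\Lambda(n_i,\mathbb K_i)$: the structure theorem for differentiably simple superalgebras over $\mathbb R$ already produces $\mathbb K_i$ as the field of differential constants of $I_i$, so $\mathbb R$-linear derivations of $I_i$ commute with the $\mathbb K_i$-action and \eqref{deriva} applies with $\mathbb K=\mathbb K_i$. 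That is the content of the ``slight modification'': one has to check that the version of \cite[Prop.~7.1]{cheng} one needs is the one over $\mathbb K_i$, and that the embedding $\germ g\hookrightarrow\bigoplus_i\mathrm{Der}_{\mathbb R}(I_i)$ already factors through $\bigoplus_i\mathrm{Der}_{\mathbb K_i}(I_i)$. Once you make this explicit, the sandwich follows.
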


\section{Geometric background}
 
Since we are interested in studying unitary representations from an analytic viewpoint,
we need to realize them as representations of Lie supergroups on 
$\mathbb Z_2$-graded Hilbert spaces. To this end, we first need to make precise
what we mean by Lie supergroups.

One can define Lie supergroups abstractly as \emph{group objects} in the
category of supermanifolds. To give sense to this definition,
one needs to define the category of supermanifolds. It will be seen below that
this can be done
by means of sheaves and ringed spaces.

Nevertheless, the above abstract definition of Lie supergroups is not 
well-suited for the study of unitary representations, and a
more explicit description of Lie supergroups is necessary. 
The aim of this section is to explain the latter 
description, which is based on the notion of Harish--Chandra pairs, and
to clarify the
relation between Harish--Chandra pairs and the categorical definition of Lie supergroups.

This section starts with a quick review of the theory of supermanifolds.
The reader 
who is not familiar with the basics of this subject and is interested in 
further detail
is referred to \cite{delignemorgan}, \cite{kostant}, \cite{leites}, 
\cite{manin}, and \cite{rajasupersym}.

We remind the reader that in the study of 
unitary representations only the simple point of view of
Harish--Chandra pairs
will be used. Therefore the reader may also skip the review of supergeometry and continue reading 
from Section~\ref{sectionsuperharishchandra}, where Harish--Chandra pairs are introduced.

\subsection{Supermanifolds}
Let $p$ and $q$ be nonnegative integers, and let $\mathscr O_{\mathbb R^p}$ denote the sheaf of smooth 
real valued functions on  $\mathbb R^p$. The \emph{smooth $(p|q)$-dimensional superspace} $\mathbb R^{p|q}$ is the 
ringed space $(\mathbb R^p,\mathscr O_{\mathbb R^{p|q}})$ where $\mathscr O_{\mathbb R^{p|q}}$ is the sheaf of 
smooth \emph{superfunctions} in $q$ odd coordinates.
The latter statement simply means that for every open $U\subseteq\mathbb R^p$ one has 
\[
\mathscr O_{\mathbb R^{p|q}}(U)=\mathscr O_{{\mathbb R}^p}(U)
\otimes_\mathbb R
\mathbf \Lambda(q,\mathbb R)\]
and the restriction maps of $\mathscr O_{\mathbb R^{p|q}}$ are obtained by
base extensions of the restriction maps of 
$\mathscr O_{{\mathbb R}^{p}}$. 

The ringed space 
$(\mathbb R^p,\mathscr O_{\mathbb R^{p|q}})$ is an object of the category 
$\mathsf{{Top}_{s-alg}}$
of topological spaces which are endowed with sheaves of associative unital
superalgebras over $\mathbb R$. 
If $\mathcal X=(\mathcal X_\circ,\mathscr O_\mathcal X)$ and $\mathcal Y=(\mathcal Y_\circ,\mathscr O_\mathcal Y)$
are objects in 
$\mathsf{{Top}_{s-alg}}$
then a morphism $\varphi:\mathcal X\to\mathcal Y$
is a pair $(\varphi_\circ,\varphi^\#)$ such that 
$\varphi_\circ:\mathcal X_\circ\to \mathcal Y_\circ$ is a continuous map and  
\[
\varphi^\#:\mathscr O_\mathcal Y\to(\varphi_\circ)_*\mathscr O_\mathcal X
\] 
is a morphism of sheaves of associative unital superalgebras over $\mathbb R$, where 
$(\varphi_\circ)_*\mathscr O_\mathcal X$ is the direct image 
\footnote{In \cite{delignemorgan} the authors define morphisms based on pullback.
Since pullback and direct image are adjoint functors, the definition 
of \cite{delignemorgan}
is equivalent to the definition given in this article, which is 
also used in \cite{leites}.} of $\mathscr O_\mathcal X$.

An object of $\mathsf{{Top}_{s-alg}}$ 
is called a \emph{supermanifold} of dimension $(p|q)$ if it is locally 
isomorphic to $\mathbb R^{p|q}$. 
supermanifolds constitute objects of a full subcategory of $\mathsf{{Top}_{s-alg}}$.

\subsection{Some basic constructions for supermanifolds}
If $\mathcal M=(\mathcal M_\circ,\mathscr O_\mathcal M)$ is a supermanifold 
of dimension $(p|q)$ then the nilpotent sections
of $\mathscr O_\mathcal M$ generate a sheaf of ideals $\mathscr I_\mathcal M$. Indeed the 
underlying space 
$\mathcal M_\circ$ 
is an ordinary smooth manifold
whose sheaf of smooth functions is 
$\mathscr O_\mathcal M/\mathscr I_\mathcal M$. 
One can also show that if $\mathcal M=(\mathcal M_\circ,\mathscr O_\mathcal M)$ and 
$\mathcal N=(\mathcal N_\circ,\mathscr O_\mathcal N)$ 
are two supermanifolds and
$\varphi:\mathcal M\to\mathcal N$ is a morphism 
then the map
$\varphi_\circ:\mathcal M_\circ\to \mathcal N_\circ$ is smooth
(see \cite[Sec. 2.1.5]{leites}).

Locally,
$\mathscr O_\mathcal M/\mathscr I_\mathcal M$ 
is isomorphic to $\mathscr O_{\mathbb R^p}$.
Therefore, if $U\subseteq\mathcal M_\circ$ is an open set, then 
for every section  
$f\in\mathscr O_\mathcal M(U)$ and every point $m\in U$ 
the value $f(m)$ is well defined. In this fashion, from any section $f$ one
obtains a smooth map 
\[
\tilde f:U\to\mathbb R.
\]
Nevertheless, because of the existence of nilpotent sections,
$f$ is not uniquely determined  by $\tilde f$.

Supermanifolds resemble ordinary manifolds in many ways. For example, 
one can prove the existence of  
finite direct products
in the category of supermanifolds.
Moreover, for 
a supermanifold $\mathcal M$ of dimension $(p|q)$
the sheaf $\mathrm{Der}_\mathbb R(\mathscr O_\mathcal M)$ of $\mathbb R$-linear derivations of the structural sheaf 
$\mathscr O_\mathcal M$  
is a locally free sheaf of $\mathscr O_\mathcal M$-modules of rank $(p|q)$.
Sections of the latter sheaf are called \emph{vector fields} of $\mathcal M$.
The space of vector fields is closed under the superbracket induced from 
$\mathrm{End}_\mathbb R(\mathscr O_\mathcal M)$.

If $\mathcal M=(\mathcal M_\circ,\mathscr O_\mathcal M)$ is a supermanifold and 
$m\in\mathcal M_\circ$, then there exists an obvious morphism 
\[
\delta_m:\mathbb R^{0|0}\to\mathcal M
\]
where $(\delta_m)_\circ:\mathbb R^0\to\mathcal M_\circ$ 
maps the unique point of $\mathbb R^{0}$ to $m$, and for every open set
$U\subseteq \mathcal M_\circ$ if $f\in\mathscr O_\mathcal M(U)$ then
\[
(\delta_m)^\#(f)=
\left\{
\begin{array}{ll}
\tilde f(m)\quad&\textrm{if }m\in U,\\
0&\textrm{otherwise.}
\end{array}
\right.
\]
Moreover, $\mathbb R^{0|0}$ is a terminal object in the category of supermanifolds.
Indeed for every supermanifold $\mathcal M=(\mathcal M_\circ,\mathscr O_\mathcal M)$
there exists a morphism 
\[
\kappa_\mathcal M:\mathcal M\to\mathbb R^{0|0}
\]
such that $(\kappa_\mathcal M)_\circ:\mathcal M_\circ\to\mathbb R^0$ maps every point of 
$\mathcal M_\circ$ to the unique point of $\mathbb R^0$ and for every 
$t\in\mathscr O_{\mathbb R^{0|0}}(\mathbb R^0)\simeq\mathbb R$ one has
$
(\kappa_\mathcal M)^\#(t)
=
t\cdot 1_\mathcal M
$.

\subsection{Lie supergroups and their Lie superalgebras}
Recall that by a \emph{Lie supergroup} we mean a group object in the 
category of supermanifolds. 
In other words, a supermanifold $\mathcal G=(\mathcal G_\circ,\mathscr O_\mathcal G)$ is a Lie supergroup if there exist morphisms
\begin{gather*}
\mu:\mathcal G\times\mathcal G\to\mathcal G\,,\ \,\varepsilon:\mathbb R^{0|0}\to\mathcal G\,,\ \ \textrm{and}\ \,
\iota:\mathcal G\to\mathcal G
\end{gather*} 
which satisfy the standard relations that describe associativity, existence of an identity element, and inversion. 
It follows that $\mathcal G_\circ$ is a Lie group whose multiplication
is given by $\mu_\circ:\mathcal G_\circ\times\mathcal G_\circ\to\mathcal G_\circ$.

To a Lie supergroup $\mathcal G$ one can associate a Lie superalgebra $\mathrm{Lie}(\mathcal G)$
which is the subspace of $\mathrm{Der}_\mathbb R(\mathscr O_\mathcal G)$ consisting of
left invariant
vector fields of $\mathcal G$.
The only subtle point in the definition of $\mathrm{Lie}(\mathcal G)$
is the definition of left invariant vector fields.
Left invariant vector fields can be defined in several ways. For example, in 
\cite{delignemorgan} the authors use the functor of points. We would like to mention 
a different method
which is also described in \cite{boyersanchez}.
For every $g\in\mathcal G_\circ$, one can define left translation morphisms 
$\lambda_g:\mathcal G\to\mathcal G$ by 
\[
\lambda_g=\mu\circ ((\delta_g\circ \kappa_\mathcal G)\times\mathrm{id}_\mathcal G)
\]
where 
$\mathrm{id}_\mathcal G:\mathcal G\to\mathcal G$ is the identity morphism. Similarly, one can define 
right translation morphisms 
\[
\rho_g=\mu\circ(\mathrm{id}_\mathcal G\times(\delta_g\circ\kappa_\mathcal G)).
\] 
A vector field $D$ is called
\emph{left invariant} if it commutes with left translation, i.e., 
\[
(\lambda_g)^\#\circ D=D\circ(\lambda_g)^\#.
\]
It is easily checked that $\mathrm{Lie}(\mathcal G)$, 
the space of left invariant vector fields of $\mathcal G$, 
is closed under
the super bracket of $\mathrm{Der}_\mathbb R(\mathscr O_\mathcal M)$.
Moreover, there is an action of $\mathcal G_\circ$ on 
$\mathrm{Lie}(\mathcal G)$ 
given by 
\begin{equation}
\label{actionofgnod}
D\mapsto (\rho_g)^\#\circ D\circ (\rho_{\iota_\circ(g)})^\#.
\end{equation}
Because of Part (ii) of Proposition \ref{fourpropertiesofg} below
it is natural to denote this action by $\mathrm{Ad}(g)$.
\begin{proposition}
\label{fourpropertiesofg}
For a Lie supergroup 
$\mathcal G=(\mathcal G_\circ,\mathscr O_\mathcal G)$ the following statements hold.
\begin{description}[iiiiii]
\item[\rm(i)] $\mathrm{Lie}(\mathcal G)=\mathrm{Lie}(\mathcal G)_\eev\oplus\mathrm{Lie}(\mathcal G)_\ood$ is a Lie superalgebra over $\mathbb R$.
\item[\rm(ii)] The action of $\mathcal G_\circ$ on $\mathrm{Lie}(\mathcal G)$ given by
\eqref{actionofgnod} yields a smooth homomorphism of Lie groups
\[
\mathrm{Ad}:\mathcal G_\circ\to\mathrm{GL}(\mathrm{Lie}(\mathcal G))
\]
such that $\mathrm{Ad}(\mathcal G_\circ)\subseteq\mathrm{End}_\mathbb R(\mathrm{Lie}(\mathcal G))_\eev$.
\item[\rm(iii)] $\mathrm{Lie}(\mathcal G)_\eev$ is the Lie algebra of $\mathcal G_\circ$ and if $\mathsf d(\mathrm{Ad})$ denotes the differential of the above 
map $\mathrm{Ad}$, then 
\[
\mathsf d(\mathrm{Ad})(X)(Y)=\mathrm{ad}(X)(Y)
\]
for every $X\in\mathrm{Lie}(\mathcal G)_\eev$ and every $Y\in \mathrm{Lie}(\mathcal G)$,
where
\[\mathrm{ad}(X)(Y)=[X,Y].\]

\end{description}

\end{proposition}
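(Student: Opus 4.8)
The plan is to handle the three parts in order, reducing everything possible to the Lie superalgebra structure on $\mathrm{Der}_\mathbb R(\mathscr O_\mathcal G)$ and to the classical theory on the underlying Lie group $\mathcal G_\circ$. For Part~(i), I would begin from the two facts already recorded above: the vector fields of $\mathcal G$ form a Lie superalgebra under the superbracket inherited from $\mathrm{End}_\mathbb R(\mathscr O_\mathcal G)$, and the left invariant ones form a subspace closed under this bracket. What remains is to see that this subspace is graded. The key observation is that each pullback $(\lambda_g)^\#$ is a morphism of sheaves of superalgebras, hence even, and therefore commutes with the parity operator of $\mathscr O_\mathcal G$. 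Consequently, if $D=D_\eev+D_\ood$ is left invariant, then applying the parity operator shows that $D_\eev$ and $D_\ood$ are left invariant separately, so $\mathrm{Lie}(\mathcal G)=\mathrm{Lie}(\mathcal G)_\eev\oplus\mathrm{Lie}(\mathcal G)_\ood$ is a sub Lie superalgebra of $\mathrm{Der}_\mathbb R(\mathscr O_\mathcal G)$.

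For Part~(ii), I would first verify that \eqref{actionofgnod} defines an endomorphism of $\mathrm{Lie}(\mathcal G)$, i.e.\ that $\mathrm{Ad}(g)D$ is again left invariant; this follows from the commutativity of left and right translations, itself a restatement of the associativity of $\mu$. The homomorphism property $\mathrm{Ad}(gh)=\mathrm{Ad}(g)\mathrm{Ad}(h)$ then falls out of the composition rule $(\rho_g)^\#\circ(\rho_h)^\#=(\rho_{gh})^\#$ together with the fact that $\iota_\circ$ is an anti-homomorphism of $\mathcal G_\circ$. Since each $(\rho_g)^\#$ is even, conjugation by it preserves parity, giving $\mathrm{Ad}(g)\in\mathrm{End}_\mathbb R(\mathrm{Lie}(\mathcal G))_\eev$, while invertibility follows from $\mathrm{Ad}(g)^{-1}=\mathrm{Ad}(\iota_\circ(g))$. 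Smoothness of $g\mapsto\mathrm{Ad}(g)$ I would deduce from the smoothness of $\mu$ and $\iota$, expressing the entries of $\mathrm{Ad}(g)$ in a fixed basis of $\mathrm{Lie}(\mathcal G)$ through the structure morphisms evaluated along $\mathcal G_\circ$.

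For Part~(iii), the identification of $\mathrm{Lie}(\mathcal G)_\eev$ with $\mathrm{Lie}(\mathcal G_\circ)$ rests on the fact that a left invariant vector field is determined by its value at the identity, i.e.\ by a super tangent vector at $e$; restricting to the even part matches $\mathrm{Lie}(\mathcal G)_\eev$ with the ordinary tangent space of $\mathcal G_\circ$ at $e$, and hence with $\mathrm{Lie}(\mathcal G_\circ)$, compatibly with brackets since the superbracket of even derivations is the ordinary commutator and descends to $\mathscr O_\mathcal G/\mathscr I_\mathcal G$. For the differential formula I would differentiate \eqref{actionofgnod} along a one parameter subgroup $g(t)$ of $\mathcal G_\circ$ with $\dot g(0)=X\in\mathrm{Lie}(\mathcal G)_\eev$. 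The essential input is that the flow of the left invariant field $X$ is given by right translations, so that $\frac{d}{dt}\big|_{t=0}(\rho_{g(t)})^\#=X$; since $\iota_\circ(g(t))$ has velocity $-X$, one likewise gets $\frac{d}{dt}\big|_{t=0}(\rho_{\iota_\circ(g(t))})^\#=-X$. As $g(0)=e$ and $(\rho_e)^\#=\mathrm{id}$, the product rule applied to \eqref{actionofgnod} yields, for every $Y\in\mathrm{Lie}(\mathcal G)$,
\[
\mathsf d(\mathrm{Ad})(X)(Y)=X\circ Y-Y\circ X=[X,Y],
\]
the last equality because $X$ is even.

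The main obstacle is the analytic content underlying Part~(iii), and the related smoothness claim in Part~(ii): one must show rigorously that a one parameter subgroup of $\mathcal G_\circ$ generated by an even $X$ lifts to a flow of supermanifold morphisms whose infinitesimal generator, as an operator on $\mathscr O_\mathcal G$, is exactly the left invariant field $X$. This is the super analogue of the flow/exponential correspondence, where one either passes to the functor of points or integrates the even vector field explicitly, rather than merely quoting the classical statement. Once this lifting is available the product rule computation is routine, and the odd directions $Y$ cause no extra trouble because $X$ is even.
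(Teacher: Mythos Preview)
The paper does not actually prove this proposition: it is stated as background material in the review of supergeometry and immediately followed by the next subsection on Harish--Chandra pairs, with the implicit understanding that proofs can be found in the cited references (\cite{delignemorgan}, \cite{boyersanchez}, \cite{kostant}, \cite{leites}). There is therefore nothing in the paper to compare your argument against.

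That said, your sketch is a reasonable outline of how such a proof goes and is in line with the standard treatments. The reduction of Part~(i) to the evenness of $(\lambda_g)^\#$, the derivation of Part~(ii) from associativity of $\mu$ and the composition law for right translations, and the product-rule computation in Part~(iii) are all sound. You have also correctly identified the one genuinely nontrivial point: the fact that the one-parameter subgroup $t\mapsto\exp(tX)$ of $\mathcal G_\circ$ lifts to a flow of supermanifold automorphisms whose infinitesimal generator on $\mathscr O_\mathcal G$ is the left-invariant vector field $X$. In the references this is handled either via the functor of points (as in \cite{delignemorgan}) or by an explicit construction of the exponential morphism (as in \cite{boyersanchez} or \cite{kostant}); your proposal would need to invoke one of these to be complete.
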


\subsection{Harish--Chandra pairs} 
\label{sectionsuperharishchandra}
Proposition \ref{fourpropertiesofg} states that to a Lie supergroup $\mathcal G$ one can associate
an ordered pair $(\mathcal G_\circ,\mathrm{Lie}(\mathcal G))$, where 
$\mathcal G_\circ$ is a real Lie group and $\mathrm{Lie}(\mathcal G)$ is a Lie superalgebra 
over $\mathbb R$, which satisfy certain properties.
Such an ordered pair is  a \emph{Harish--Chandra pair}.

\begin{definition}
A Harish--Chandra pair is a pair $(G,\germ g)$ consisting of a Lie group
$G$ and a Lie superalgebra $\germ g$
which satisfy the following properties.
\begin{description}[iiiiii]
\item[\rm(i)] $\germ g_\eev$ is the Lie algebra of $G$.
\item[\rm(ii)] $G$ acts on $\germ g$ smoothly by $\mathbb R$-linear automorphisms. 
\item[\rm(iii)] The differential of the action of $G$ on $\germ g$ is equal to the adjoint action of $\germ g_\eev$ on $\germ g$.
\end{description}
\end{definition} 
One can prove that 
\[
\mathcal G\mapsto(\mathcal G_\circ,\mathrm{Lie}(\mathcal G))
\]
is an equivalence of categories from the category of Lie supergroups to the category of  
Harish--Chandra pairs.
Under this equivalence of categories, a morphism $\psi:\mathcal G\to\mathcal H$ in the category of Lie supergroups corresponds to a 
pair $(\psi_\circ,\psi_\mathrm{Lie})$ where $\psi_\circ:\mathcal G_\circ\to\mathcal H_\circ$ is a homomorphism of Lie groups,
\[
\psi_\mathrm{Lie}:\mathrm{Lie}(\mathcal G)\to\mathrm{Lie}(\mathcal H)
\] 
is a homomorphism of Lie superalgebras, and  
\[\mathsf d\psi_\circ=\psi_\mathrm{Lie}\big|_{\mathrm{Lie}(\mathcal G)_\eev}.
\]

\begin{remark}
Using Harish--Chandra pairs one can study Lie supergroups and their representations without any reference to the structural sheaves.
In the rest of this article, Lie supergroups will always be realized as Harish--Chandra pairs.
\end{remark}

\section{Unitary representations}
According to \cite[Sec. 4.4]{delignemorgan} one can define a
finite dimensional super Hilbert space as a finite dimensional complex 
$\mathbb Z_2$-graded vector space which is endowed
with an even super Hermitian form. Nevertheless, since the even super Hermitian form is generally indefinite, 
in the infinite dimensional case one should 
address the issues of topological completeness and separability. For the purpose of studying unitary representations
it would be 
slightly more convenient to take an equivalent approach which is 
more straightforward, but less canonical.

\subsection{Super Hilbert spaces}
A \emph{super Hilbert space} is a $\mathbb Z_2$-graded 
complex Hilbert space $\mathscr H=\mathscr H_\eev\oplus\mathscr H_\ood$ 
such that $\mathscr H_\eev$ and $\mathscr H_\ood$ are mutually orthogonal closed subspaces.
If $\langle\cdot,\cdot\rangle$ denotes the inner product of $\mathscr H$, then for every two
homogeneous elements $v,w\in\mathscr H$ the 
even super Hermitian form $(v,w)$ of $\mathscr H$ is defined by
\[
(v,w)=
\left\{
\begin{array}{ll}
0& \textrm{if }v\textrm{ and }w \textrm{ have opposite parity,}\\
\langle v,w\rangle& \textrm{if }v \textrm{ and }w\textrm{ are even,}\\
i\langle v,w\rangle& \textrm{if }v \textrm{ and }w\textrm{ are odd.}\\
\end{array}
\right.
\]
One can check that $(\cdot,\cdot)$ satisfies the properties stated in \cite[Sec. 4.4]{delignemorgan}.
In this article the latter sesquilinear form will not be used. 

\subsection{The definition of a unitary representation}
In order to obtain an analytic theory of unitary representations of 
Lie supergroups
one should deal with the same sort of analytic difficulties that 
exist in the case of Lie groups. One of the main difficulties is that 
in general one cannot
define the differential of an infinite dimensional representation of a  
Lie group on the entire
representation space. However, one can always define the differential on
certain invariant dense subspaces, such as the
space of \emph{smooth vectors}. 

In the rest of this article, the reader 
is assumed to be familiar with classical 
results in the theory of unitary representations of 
Lie groups.  For a detailed and 
readable treatment of this subject see \cite{varalie}.

If $\mathscr H$ is a (possibly $\mathbb Z_2$-graded) complex Hilbert space, the
group of unitary operators of $\mathscr H$ is denoted by $\mathbf U(\mathscr H)$.
As usual, if $\pi:G\to\mathbf U(\mathscr H)$ is a unitary representation of a Lie group $G$, then
the space of smooth vectors (respectively, analytic vectors) of $(\pi,\mathscr H)$ is 
denoted by $\mathscr H^\infty$ (respectively, $\mathscr H^\omega$).

\begin{definition}
\label{defofunirep}
Let $(G,\germ g)$ be a Lie supergroup. A unitary representation 
of $(G,\germ g)$ is a triple $(\pi,\rho^\pi,\mathscr H)$ satisfying the following properties. 
\begin{description}[iiiiii]
\item[\rm(i)] $\mathscr H=\mathscr H_\eev\oplus\mathscr H_\ood$ is a super Hilbert space.
\item[\rm(ii)] $(\pi,\mathscr H)$ is a unitary representation of $G$ and
$\pi(g)\in\mathrm{End}_\mathbb C(\mathscr H)_\eev$ for every $g\in G$. 
\item[\rm(iii)] $\rho^\pi:\germ g\to\mathrm{End}_\mathbb C(\mathscr H^\infty)$ is an
$\mathbb R$-linear $\mathbb Z_2$-graded map, where
$\mathscr H^\infty$ denotes the space of smooth vectors of $(\pi,\mathscr H)$. Moreover, for every
$X,Y\in\germ g_\ood$,
\[
\rho^\pi(X)\rho^\pi(Y)+\rho^\pi(Y)\rho^\pi(X)=-i\rho^\pi([X,Y]).
\] 

\item[\rm(iv)] If $\mathsf d\pi$ denotes the differential of $\pi$ then $\rho^\pi(X)=\mathsf \dd\pi(X)$
for every $X\in\germ g_\eev$.
\item[\rm(v)] For every $X\in\germ g_\ood$ the operator $\rho^\pi(X)$ is symmetric, i.e., if
$v,w\in\mathscr H^\infty$ then 
\[
\langle\rho^\pi(X)v,w\rangle=\langle v,\rho^\pi(X)w\rangle.
\]
\item[\rm(vi)] For every $g\in G$ and every $X\in\germ g$, 
\[
\rho^\pi\big(\mathrm{Ad}(g)(X)\big)=\pi(g)\rho^\pi(X)\pi(g)^{-1}.
\]
\end{description}
\end{definition}
\begin{remark}
It is easy to see that 
by letting an element $X_\eev+X_\ood\,\in\,\germ g_\eev\oplus\germ g_\ood$ act on $\mathscr H^\infty$ as 
\[
\rho^\pi(X_\eev)+e^{\frac{\pi}{4}i}\rho^\pi(X_\ood)
\]
one obtains 
from $\rho^\pi$ a homomorphism
of Lie superalgebras from $\germ g$ into $\mathrm{End}_\mathbb C(\mathscr H^\infty)$.
\end{remark}
\begin{remark}
Subrepresentations, irreducibility, and unitary equivalence of unitary representations of Lie supergroups 
are defined similar to unitary representations of 
Lie groups (see \cite{rajaetal}). Note that 
in the definition of unitary equivalence, 
intertwining operators are assumed to preserve the grading. This means that in general a unitary representation
is not necessarily unitarily equivalent to the one obtained by parity change.
\end{remark}

\begin{lemma} \label{lem:4.2.1} For each $X \in \g$, the operator 
$\rho^\pi(X) \: \cH^\infty \to \cH^\infty$ is continuous with respect to the 
Fr\'echet topology on $\cH^\infty$. Moreover, the bilinear map 
\begin{equation}
  \label{eq:liealgact}
\g \times \cH^\infty \to \cH^\infty, \quad (X, v) \mapsto 
\rho^\pi(X)v 
\end{equation}
is continuous. 
\end{lemma}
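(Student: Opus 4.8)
The plan is to reduce to the two homogeneous cases $X\in\g_\eev$ and $X\in\g_\ood$ and recover the general statement from the $\R$-linearity of $\rho^\pi$ (property~(iii)). Throughout I use that the Fréchet topology on $\cH^\infty$ is given by the seminorms $p_D(v)=\|\dd\pi(D)v\|$, where $D$ runs over $\mathscr{U}(\g_\eev)$ and $\dd\pi$ denotes the canonical extension to the enveloping algebra. For $X\in\g_\eev$ there is nothing to do: property~(iv) gives $\rho^\pi(X)=\dd\pi(X)$, and since $p_D(\dd\pi(X)v)=p_{DX}(v)$ for every monomial $D$, the operator is continuous. Hence all the content lies in the odd case.

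The first key step for odd $X$ is the elementary identity coming from property~(iii): taking $Y=X$ gives $2\rho^\pi(X)^2=-i\rho^\pi([X,X])$, and since $[X,X]\in\g_\eev$, property~(iv) turns this into $\rho^\pi(X)^2=-\tfrac{i}{2}\dd\pi([X,X])$. Thus the square of an odd operator is, up to a scalar, an even operator we already control. Combining this with the symmetry~(v) and the Cauchy--Schwarz inequality, for $v\in\cH^\infty$ I obtain
\[
\|\rho^\pi(X)v\|^2=\langle v,\rho^\pi(X)^2 v\rangle=-\tfrac{i}{2}\langle v,\dd\pi([X,X])v\rangle\le\tfrac{1}{2}\|v\|\,\|\dd\pi([X,X])v\|,
\]
where the left-hand side is real and nonnegative. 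This controls the zeroth seminorm of $\rho^\pi(X)v$ by continuous seminorms of $v$, i.e.\ $\rho^\pi(X)\colon\cH^\infty\to\cH$ is continuous.

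To upgrade this to continuity into $\cH^\infty$ I must bound every $p_D(\rho^\pi(X)v)=\|\dd\pi(D)\rho^\pi(X)v\|$, which requires commuting $\dd\pi(D)$ past $\rho^\pi(X)$ via the relation $[\dd\pi(Y),\rho^\pi(X)]=\rho^\pi([Y,X])$ for $Y\in\g_\eev$, obtained by differentiating the equivariance~(vi) along $\exp(tY)$ at $t=0$. I expect this differentiation to be the main obstacle: differentiating $t\mapsto\pi(\exp tY)\rho^\pi(X)\pi(\exp -tY)v$ by the naive product rule presupposes continuity of $\rho^\pi(X)$ on $\cH^\infty$, which is exactly what is to be proved, so a direct argument is circular. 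I would circumvent this by passing to the weak form: pairing~(vi) against a fixed $w\in\cH^\infty$, I use unitarity of $\pi$ together with the symmetry~(v) to write the scalar function of $t$ as a pairing in which each partial derivative reduces to the pairing of a smooth $\cH$-valued orbit curve against a \emph{fixed} vector; these are manifestly differentiable, and differentiating, using also the skew-adjointness of $\dd\pi(Y)$, yields $\langle[\dd\pi(Y),\rho^\pi(X)]v,w\rangle=\langle\rho^\pi([Y,X])v,w\rangle$. Density of $\cH^\infty$ then gives the operator identity.

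With the commutation relation established, iterated reordering of a monomial $D=\dd\pi(Y_1)\cdots\dd\pi(Y_k)$ produces a finite expression $\dd\pi(D)\rho^\pi(X)=\sum_j\rho^\pi(X_j)\dd\pi(D_j)$ with $X_j\in\g_\ood$ (iterated brackets of $X$ with the $Y_i$, using $[\g_\eev,\g_\ood]\subseteq\g_\ood$) and $D_j\in\mathscr{U}(\g_\eev)$ of degree at most $k$. Applying the base estimate of the second paragraph to each $w=\dd\pi(D_j)v$ bounds $p_D(\rho^\pi(X)v)$ by a finite sum of products of continuous seminorms of $v$, proving continuity of $\rho^\pi(X)\colon\cH^\infty\to\cH^\infty$. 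For the joint continuity I would fix a basis $(e_i)$ of $\g$, write $\rho^\pi(X)v=\sum_i a_i(X)\rho^\pi(e_i)v$ with $a_i$ linear in $X$, and split $\rho^\pi(X)v-\rho^\pi(X_0)v_0=\rho^\pi(X-X_0)v+\rho^\pi(X_0)(v-v_0)$; the second summand tends to $0$ by the continuity just proved, while the seminorm estimates, being linear in the coefficients $a_i$, dominate the first summand by $\|X-X_0\|$ times a quantity that stays bounded near $v_0$, giving the claim.
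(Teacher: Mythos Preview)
Your argument is correct, but the paper's proof is much shorter and takes a genuinely different route. For $X\in\g_\ood$ the paper simply observes that symmetry of $\rho^\pi(X)$ with respect to the $\cH$-inner product forces the graph of $\rho^\pi(X)\colon\cH^\infty\to\cH^\infty$ to be closed: if $v_n\to v$ and $\rho^\pi(X)v_n\to w$ in $\cH^\infty$ (hence in $\cH$), then $\langle w,u\rangle=\lim\langle\rho^\pi(X)v_n,u\rangle=\lim\langle v_n,\rho^\pi(X)u\rangle=\langle\rho^\pi(X)v,u\rangle$ for all $u\in\cH^\infty$, so $w=\rho^\pi(X)v$. The Closed Graph Theorem for Fr\'echet spaces then gives continuity in one line, and joint continuity follows from finite-dimensionality of $\g$. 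No commutation relations, no seminorm estimates.

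Your approach, by contrast, is constructive: the inequality $\|\rho^\pi(X)v\|^2\le\tfrac12\|v\|\,\|\dd\pi([X,X])v\|$ gives an explicit bound, and pushing $\dd\pi(D)$ through via $[\dd\pi(Y),\rho^\pi(X)]=\rho^\pi([Y,X])$ yields quantitative control of every Fr\'echet seminorm. This costs more work but buys explicit estimates and the commutation identity itself, both of which are useful later. One remark: your circularity worry is slightly overstated. Once you have established that $\rho^\pi(X)\colon\cH^\infty\to\cH$ is continuous (your second paragraph), the curve $t\mapsto\pi(\exp(-tY))v$ is smooth in $\cH^\infty$, so its image under $\rho^\pi(X)$ is smooth in $\cH$, and the ``naive'' product-rule differentiation of $\pi(\exp tY)\rho^\pi(X)\pi(\exp(-tY))v$ already goes through in $\cH$ without passing to the weak form. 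Your weak argument is fine too, but it also needs that same $\cH^\infty\to\cH$ continuity to get joint continuity of the partials, so it does not actually avoid the ingredient.
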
 

\begin{proof} Since $\g$ is finite dimensional, it suffices to 
show that each operator $\rho^\pi(X)$ is continuous. 
For $X \in \g_\eev$ , this follows from the definition of the 
Fr\'echet topology on $\cH^\infty$. 

For $X \in \g_\ood$, the operator 
$\rho^\pi(X)$ on $\cH^\infty$ is symmetric (see Definition \ref{defofunirep}(v) ), hence the graph of $\rho^\pi(X)$ is closed.
Now the Closed Graph Theorem for Fr\'echet spaces 
(see \cite[Thm.~2.15]{Ru73}) implies its continuity. 
\end{proof}

From now on we assume that $\mathscr H$ is separable. 
Although this assumption is not needed in Definition \ref{defofunirep},
it helps in avoiding technical conditions in various constructions, e.g., 
when induced representations are defined.
Note that  
if $(\pi,\rho^\pi,\mathscr H)$ is irreducible then $\mathscr H$ is separable.

In Definition \ref{defofunirep} the fact that 
$\mathscr H^\infty$ is chosen as the space of the representation of $\germ g$
is not a limitation. In fact it is shown in \cite[Prop. 2]{rajaetal} that in some sense any
reasonable choice of the space of the representation of $\germ g$,
i.e., one which is dense in $\mathscr H$ and satisfies natural invariance properties under the
actions on $G$ and $\germ g$,
would yield a definition
equivalent to the one given above.
This fact also plays a role in showing that
restriction and induction functors are well defined.
Another useful fact, which follows from \cite[Prop. 3]{rajaetal},
is that the space $\mathscr H^\omega$ of analytic vectors of 
$(\pi,\mathscr H)$ is invariant under $\rho^\pi(\germ g)$.

\subsection{Restriction and induction}
Suppose that 
$\mathcal G=(G,\germ g)$ is a Lie supergroup, and $\mathcal H=(H,\germ h)$ is a Lie subsupergroup of $\mathcal G$. 
Let $(\pi,\rho^\pi,\mathscr H)$ be a unitary representation of $\mathcal G$. A priori it is not clear 
how to restrict $(\pi,\rho^\pi,\mathscr H)$ to $\mathcal H$. The difficulty is that in general the space
of smooth vectors of the restriction of $(\pi,\mathscr H)$ to $H$ will be larger than $\mathscr H^\infty$.
To circumvent this issue one can use
\cite[Prop. 2]{rajaetal} to show that
the action of $\mathcal H$ on $\mathscr H^\infty$ determines a unique unitary representation
of $\mathcal H$ on $\mathscr H$. This representation
is called the restriction of $(\pi,\rho^\pi,\mathscr H)$ to $\mathcal H$, and is denoted by
\[
\RES_\mathcal H^\mathcal G(\pi,\rho^\pi,\mathscr H).
\]

Inducing from $\mathcal H$ to $\mathcal G$ is more delicate.
Let $(\sigma,\rho^\sigma,\mathscr K)$ be a unitary representation of $\mathcal H$.
The first step towards defining a representation $(\pi,\rho^\pi,\mathscr H)$ of $\mathcal G$ that is induced from
$(\sigma,\rho^\sigma,\mathscr K)$ is to identify the super Hilbert space $\mathscr H$.
By analogy with the case of Lie groups one expects the super Hilbert space 
$\mathscr H$ to be a space of $\mathscr K$-valued functions on $\mathcal G$
which satisfy an equivariance property with respect to the left regular action of $\mathcal H$.
One can then describe the action of $\mathcal G$ by formal relations, hoping that a unitary representation,
as defined in Definition \ref{defofunirep}, is obtained. This formal
approach leads to technical complications and it is not clear how to get around some of them.
Nevertheless,  
at least in the special case that the homogeneous super space $\mathcal H\backslash \mathcal G$ is 
purely even, i.e., when $\dim \germ g_\ood=\dim\germ h_\ood$, it is shown in \cite[Sec. 3]{rajaetal} that
the induced representation can be defined rigorously. In this article, only the special case when both $G$ and
$H$ are unimodular groups is used, and in this case the induced representation is defined as follows.
Since the homogeneous space $\mathcal H\backslash \mathcal G$ is purely even,
there is a natural isomorphism $\mathcal H\backslash\mathcal G\simeq H\backslash G$.
Choose an invariant measure
$\mu$ on $H\backslash G$, and let $\mathscr H$ be the
space of measurable functions $f:G\to\mathscr K$
which satisfy the following properties. 
\begin{description}[iiiiii]
\item[\rm(i)] $f(hg)=\sigma(h)f(g)$
 for every $g\in G$ and every $h\in H$.
\item[\rm(ii)] $\int_{H\backslash G}||f||^2d\mu<\infty$
\end{description}
The action of $G$ on $\mathscr H$ is the right regular representation, i.e., 
\begin{equation*}
\big(\pi(g)f\big)(g_1)=f(g_1g)\quad \textrm{ for every }\quad g,g_1\in G, 
\end{equation*}
and one can easily check that it is unitary with respect to the
standard
inner product of $\mathscr H$. The most natural way to 
define the action of an element $X\in\germ g_\ood$ on an element $f\in\mathscr H^\infty$
is via the formula
\begin{equation}
\label{actionofgg}
\big(\rho^\pi(X)f\big)(g)=\rho^\sigma\big(\mathrm{Ad}(g)(X)\big)\big(f(g)\big).
\end{equation}
It is known that every $f\in\mathscr H^\infty$ is a smooth function from $G$ to $\mathscr K$
and $f(g)\in\mathscr K^\infty$ for every $g\in G$ \cite[Th. 5.1]{poulsen}. Consequently, the 
right hand side of  \eqref{actionofgg} is well defined. However, a priori it is not obvious why for an
element $X\in\germ g_\ood$ the 
right hand side of  
\eqref{actionofgg} belongs to $\mathscr H^\infty$. One can prove the weaker statement that
$\rho^\pi(X)f\in\mathscr H$ using a trick which is based on the ideas used in \cite{rajaetal}. 
Since this trick sheds some light on the situation, 
it may be worthwhile to mention it.
One can prove that the operator $\rho^\pi(X)$ is essentially self-adjoint. Let 
$\overline{\rho^\pi(X)}$
denote the closure of $\rho^\pi(X)$. The operator $I+\overline{\rho^\pi(X)}^2$
has a bounded inverse whose domain is all of $\mathscr H$ 
(this follows for instance from \cite[Chap. X, Prop. 4.2]{conway}).
For every $f\in\mathscr H^\infty$,
\[
\rho^\pi(X)f=\overline{\rho^\pi(X)}f=\overline{\rho^\pi(X)}(I+\overline{\rho^\pi(X)}^2)^{-1}
(I+\overline{\rho^\pi(X)}^2)f.
\]
Using the spectral theory of self-adjoint operators one can
show that the operator
$\overline{\rho^\pi(X)}(I+\overline{\rho^\pi(X)}^2)^{-1}$ is bounded. Moreover,
\[
(I+\overline{\rho^\pi(X)}^2)f=\big(I-\frac{i}{2}\mathsf d\pi([X,X])\big)f\in\mathscr H^\infty.
\]
Finally, boundedness of $\overline{\rho^\pi(X)}(I+\overline{\rho^\pi(X)}^2)^{-1}$
implies that $\rho^\pi(X)f\in\mathscr H$.

To prove that indeed $\rho^\pi(X)f\in\mathscr H^\infty$ requires more effort.
This is proved in \cite[Sec. 3]{rajaetal} in an indirect way. The idea of the proof is to find a 
dense subspace $\mathscr B\subseteq\mathscr H^\infty$ such that
$\rho^\pi(\germ g)\mathscr B\subseteq\mathscr B$. 
As shown in \cite[Sec. 3]{rajaetal},
one can take $\mathscr B$ to be
the 
subspace of $\mathscr H^\infty$ consisting of functions from $G$ to $\mathscr K$ with compact 
support modulo $H$.
That $(\pi,\rho^\pi,\mathscr H)$
is well defined then follows from
\cite[Prop. 2]{rajaetal}.

The representation $(\pi,\rho^\pi,\mathscr H)$ induced from $(\sigma,\rho^\sigma,\mathscr K)$ is denoted by
\[
\IND_{\mathcal H}^{\mathcal G}(\sigma,\rho^\sigma,\mathscr K).
\]
It can be shown \cite[Prop. 3.2.1]{salmasian}
that induction may be done in stages, i.e., if $\mathcal H$ is a Lie subsupergroup of $\mathcal G$, $\mathcal K$ 
is a Lie subsupergroup of $\mathcal H$, and $(\sigma,\rho^\sigma,\mathscr K)$ is a unitary 
representation of $\mathcal K$, then 
\[
\IND_\mathcal H^\mathcal G\IND_\mathcal K^\mathcal H(\sigma,\rho^\sigma,\mathscr K)\simeq
\IND_\mathcal K^\mathcal G(\sigma,\rho^\sigma,\mathscr K).
\]

\section{Invariant cones in Lie algebras}
The goal of this section is to take a brief look at convex cones in 
finite dimensional real Lie algebras which are invariant 
under the adjoint action. A natural reduction to the case 
where the cone is pointed and generating leads to an interesting 
class of Lie algebras with a particular structure that will be 
discussed below.

A closed convex cone $C$ in a finite 
dimensional vector space $V$ is said to be 
\emph{pointed} if $C \cap - C = \{0\}$, i.e., if
$C$ contains no affine lines. It is said to be 
\emph{generating} if $C - C = V$ or equivalently if
$\INT(C)$ is nonempty, where $\INT(C)$ denotes the set of interior points of $C$. 
If $C$ is a cone in a finite dimensional vector space $V$ then 
$C^\star$ denotes the cone in $V^*$ consisting of all $\lambda\in V^*$ such that 
$\lambda(x)\geq 0$ for every $x\in C$.

\subsection{Pointed generating invariant cones}
Let $\germ g$ be a finite dimensional Lie algebra over $\mathbb R$.
A cone $ C \subseteq \germ g$ is called \emph{invariant} if it is closed, convex, and
invariant 
under $\mathrm{Inn}(\germ g)$.

Suppose that $C$ is an invariant cone in  $\germ g$ and set
$\mathrm H(C) = C \cap - C$ and $\germ g(C)=C - C$.
The subspaces $\mathrm H(C)$ and $\germ g(C)$ are ideals of $\germ g$
and
$C/\mathrm H(C)$ is a pointed generating invariant cone in the quotient 
Lie algebra $\germ g(C)/\mathrm H(C)$. The main concern of the theory of 
invariant cones is to understand the situation when 
$C$ is pointed and generating. 

The existence of pointed generating invariant cones in a Lie algebra has the following
simple but useful consequence.
\begin{lemma}
\label{lemma-abelianideal}
Let $C$ be a pointed generating invariant cone in $\germ g$. If $\germ a$ is an 
abelian ideal of $\germ g$ then $\germ a\subseteq\mathscr Z(\germ g)$.
\end{lemma}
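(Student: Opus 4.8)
The plan is to use the invariance of the cone $C$ under the inner automorphism group $\mathrm{Inn}(\germ g)$ to show that the adjoint action of $\germ g$ on the abelian ideal $\germ a$ must be trivial. The key observation is that for $X \in \germ a$, the one-parameter group $e^{t\,\mathrm{ad}(Y)}$ preserves $C$ for every $Y \in \germ g$, and since $\germ a$ is an abelian ideal, these flows act on $\germ a$ in a controlled way. First I would fix an arbitrary $Y \in \germ g$ and consider the linear map $\mathrm{ad}(Y)$ restricted to $\germ a$; because $\germ a$ is an ideal, $\mathrm{ad}(Y)$ maps $\germ a$ into itself, so $e^{t\,\mathrm{ad}(Y)}$ restricts to a one-parameter group of linear automorphisms of $\germ a$ preserving the cone $C \cap \germ a$.

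The heart of the argument is the following: $C$ is pointed and generating in $\germ g$, so $C \cap \germ a$ is a pointed cone in $\germ a$ (pointedness is inherited by intersection with a subspace), and I claim it is generating in $\germ a$. To see this, note that since $C$ generates $\germ g$ and $\germ a$ is an ideal, one can use the invariance together with an averaging or a projection argument to produce enough elements of $C$ lying in $\germ a$; more directly, the interior of $C$ meets every coset in a way that lets one intersect with $\germ a$. Once $C \cap \germ a$ is established to be a pointed generating cone in $\germ a$, the decisive step is a standard fact about automorphisms preserving a pointed generating cone: the group of linear automorphisms of a finite dimensional vector space that preserve such a cone is a \emph{compact} group modulo scalings, and in particular its Lie algebra consists of operators whose eigenvalues are purely imaginary. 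Applying this to the one-parameter group $e^{t\,\mathrm{ad}(Y)|_\germ a}$ forces $\mathrm{ad}(Y)|_\germ a$ to have only purely imaginary eigenvalues.

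To upgrade ``purely imaginary eigenvalues'' to ``zero,'' I would invoke pointedness more carefully: a pointed generating cone admits no nontrivial one-parameter group of automorphisms with a genuinely rotational (nonzero imaginary eigenvalue) part that also preserves pointedness, unless that rotation fixes the cone setwise in a trivial way; combined with the fact that $\mathrm{ad}(Y)|_\germ a$ is a single real operator, the purely imaginary spectrum must in fact be zero. More robustly, I would argue that the automorphism group of a pointed generating cone, being compact modulo homotheties, has a Lie algebra on which the bracket structure forces $\mathrm{ad}(Y)|_\germ a$ to be semisimple with imaginary spectrum; but since $\germ a$ is abelian and $Y$ is a fixed element acting, the relevant operator must be nilpotent as well (coming from the nilpotency one can extract via the structure of the ambient Lie algebra), and an operator that is both semisimple with imaginary spectrum and constrained by the cone must vanish. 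Hence $\mathrm{ad}(Y)|_\germ a = 0$ for all $Y \in \germ g$, which says precisely that $[\germ g, \germ a] = \{0\}$, i.e., $\germ a \subseteq \mathscr Z(\germ g)$.

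The main obstacle I anticipate is the step showing $C \cap \germ a$ is generating in $\germ a$, together with the clean extraction of ``compactness of the cone-preserving automorphism group implies the spectrum of $\mathrm{ad}(Y)|_\germ a$ is purely imaginary, hence zero.'' The cleanest route is likely to cite the general theory of invariant cones (the automorphism group of a pointed generating cone is compact modulo the center of homotheties, so its Lie algebra has purely imaginary spectrum) as developed in \cite{Ne00}, rather than reprove it; I would then only need to handle the reduction to $\germ a$ and the final passage from imaginary eigenvalues to triviality of the real operator $\mathrm{ad}(Y)|_\germ a$, which follows because a nonzero skew-type action would move interior points of the pointed cone out of $\germ a \cap C$ and contradict pointedness.
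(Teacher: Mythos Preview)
Your approach has genuine gaps, and it misses the one-line argument the paper uses.

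\textbf{The paper's proof.} Take $X \in \INT(C)$ and $A \in \germ a$. Since $\germ a$ is an ideal, $[A,X] \in \germ a$; since $\germ a$ is abelian, $[A,[A,X]] = 0$. Hence $\mathrm{ad}(A)^2 X = 0$, so for all $t \in \mathbb R$
\[
e^{t\,\mathrm{ad}(A)} X = X + t[A,X] \in C
\]
by invariance. Thus the affine line $X + \mathbb R[A,X]$ lies in $C$, and pointedness forces $[A,X] = 0$. As $\INT(C)$ is open and spans $\germ g$ (because $C$ is generating), $[A,\germ g] = \{0\}$ for every $A \in \germ a$, i.e.\ $\germ a \subseteq \mathscr Z(\germ g)$.

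\textbf{Where your proposal breaks down.} First, the claim that $C \cap \germ a$ is generating in $\germ a$ is unsupported and in general false: nothing prevents $C \cap \germ a = \{0\}$ (take $\germ g$ the Heisenberg algebra with its standard cone in a Heisenberg-type extension; the abelian ideal need not meet $\INT(C)$). Your sketch ``the interior of $C$ meets every coset in a way that lets one intersect with $\germ a$'' is not an argument. Second, the assertion that the linear automorphism group of a pointed generating cone is compact modulo scalings is simply false: the forward light cone in $\mathbb R^{n+1}$ has automorphism group $\mathbb R_{>0} \times O(n,1)^\uparrow$, which is noncompact. So you cannot conclude that $\mathrm{ad}(Y)|_{\germ a}$ has purely imaginary spectrum from cone-invariance alone. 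Third, even granting imaginary spectrum, your passage to ``hence zero'' invokes nilpotency ``one can extract via the structure of the ambient Lie algebra,'' but no such nilpotency is available for $\mathrm{ad}(Y)|_{\germ a}$ with $Y \in \germ g$ arbitrary.

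The decisive idea you missed is to act by $e^{\mathrm{ad}(\germ a)}$ on interior points of $C$ (not to restrict $\mathrm{ad}(\germ g)$ to $\germ a$): abelianness of the ideal makes this action \emph{affine}, and pointed cones contain no affine lines.
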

\begin{proof}
If $X\in\INT(C)$, then $C\supseteq e^{\mathrm{ad}(\germ a)}X=X+[\germ a,X]$. 
Since $C$ contains no affine lines, $[\germ a,X]=\{0\}$. Since $X\in\INT(C)$
is arbitrary, $\germ a\subseteq\mathscr Z(\germ g)$.
\end{proof}
To study invariant cones further, we need the following lemma.

\begin{lemma}
\label{lemma-haarmeasure} Let $V$ be a finite dimensional vector space, 
$S \subseteq V$ be a convex subset, and  
$K \subseteq \mathrm{GL}(V)$ be a subgroup which leaves $S$ invariant.
Suppose that the closure of $K$ in $\mathrm{GL}(V)$ is compact.
If $S$ is open or closed, then 
it contains $K$-fixed points. 
\end{lemma}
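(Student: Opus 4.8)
The plan is to use averaging over the compact group $\oline K$ (the closure of $K$ in $\mathrm{GL}(V)$) against its normalized Haar measure to produce a fixed point, treating the open and closed cases by a common construction. First I would observe that it suffices to find an $\oline K$-fixed point in $S$, since any such point is automatically $K$-fixed, and that $\oline K$ still leaves $S$ invariant: invariance is a closed condition on the acting operator, so if every $g \in K$ maps $S$ into $S$, then the same holds for every limit $g \in \oline K$ when $S$ is closed, and a standard continuity argument handles the open case as well. Let $\mu$ denote the normalized Haar measure on the compact group $\oline K$, so that $\mu(\oline K)=1$.

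The key construction is the following. Fix any point $s_0 \in S$ and consider the orbit map $g \mapsto g\cdot s_0$ from $\oline K$ into $V$; it is continuous, and because $\oline K$ is compact its image is contained in a bounded subset of the finite dimensional space $V$. Hence the vector-valued integral
\[
p = \int_{\oline K} (g\cdot s_0)\, \dd\mu(g)
\]
is a well-defined element of $V$. Since left translation is a measure-preserving bijection of $\oline K$, a change of variables gives $h\cdot p = p$ for every $h \in \oline K$, so $p$ is $\oline K$-fixed. The remaining task is to show $p \in S$.

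The main obstacle is precisely this last point: verifying that the barycenter $p$ lands in $S$. For the closed case I would argue that $p$ lies in the closed convex hull of the orbit $\oline K\cdot s_0$; since $S$ is convex and $\oline K$-invariant it contains this orbit, and being closed and convex it contains the closed convex hull, hence contains $p$. The cleanest way to make the integral-is-in-the-convex-hull step rigorous is to separate: if $p \notin S$, then by Hahn--Banach (applicable since $S$ is closed and convex) there is a functional $\lambda \in V^*$ and a constant $c$ with $\lambda(p) > c \geq \lambda(x)$ for all $x \in S$; integrating the inequality $\lambda(g\cdot s_0) \leq c$ over $\oline K$ against $\mu$ yields $\lambda(p) \leq c$, a contradiction. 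For the open case, $S$ need not contain its boundary, so I would instead pick $s_0 \in S$ and note that $S$ open and convex is the interior of its closure; applying the closed-case argument produces $p \in \oline{S}$, and then I would use that $p$ is a barycenter of points of the open convex set $S$ together with the fact that an interior point of a convex set dominates its boundary contributions, concluding $p \in S = \INT(\oline S)$. This open-case bookkeeping, rather than the averaging itself, is where the real care is needed.
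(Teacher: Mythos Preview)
Your approach is exactly the paper's: average a point of $S$ over the normalized Haar measure of $\oline K$ to produce a fixed point, and check that the barycenter lies in $S$. The paper states only that the latter ``is easily verified,'' so your added detail is appropriate. However, your open-case detour through $\oline S$ and $\INT(\oline S)$ is unnecessary and its final step is left vague: you have already claimed that $\oline K$ leaves $S$ invariant, so the orbit $\oline K\cdot s_0$ is a compact subset of $S$; in a finite dimensional space the convex hull of a compact set is compact (Carath\'eodory), hence closed, and it lies in $S$ by convexity. The barycenter $p$ lies in this convex hull, so $p\in S$ directly, and the argument is uniform for the open and closed cases.
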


\begin{proof} Let $\overline{K}$ be the closure of $K$,
and $\mu_{\overline{K}}$ be a normalized Haar measure on $\overline{K}$. For every 
$v \in S$, the point $v_\circ=\int_{\overline{K}} (k\cdot v)\, d\mu_{\overline{K}}(k)$ is $K$-fixed, and 
it is easily verified that $v_\circ\in S$. 
 \end{proof}

The preceding lemma has the following interesting consequence for 
invariant cones. 

\begin{lemma}\label{lemma-compactness}
Let $C\subseteq\germ g$ be a pointed generating invariant cone.
Then a subalgebra 
$\germ k \subseteq \germ g$ is compactly embedded in $\germ g$ if and only if 
$
\mathscr Z_\germ g(\germ k) \cap \INT(C) \neq\emptyset$.
\end{lemma}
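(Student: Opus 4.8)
The plan is to prove both implications by relating them to the single group $K = \mathrm{INN}_\g(\germ k)$, which by definition is the closure in $\Aut(\g)$ of the subgroup generated by $e^{\ad(\germ k)}$, and to the set $\INT(C)$, which is nonempty and convex because $C$ is generating. Two observations drive everything. First, $K$ always leaves $C$, and hence $\INT(C)$, invariant: indeed $e^{\ad(\germ k)}\subseteq\mathrm{Inn}(\g)$, so the generated subgroup preserves the invariant cone $C$, and invariance passes to the closure by continuity. Second, a point $X_0$ is fixed by $K$ if and only if $[\germ k,X_0]=\{0\}$, i.e. $X_0\in\mathscr Z_\g(\germ k)$; one direction is differentiation of $e^{\ad(Y)}X_0=X_0$ and the other is exponentiation.

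For the forward implication, assume $\germ k$ is compactly embedded, so $K$ is compact. I would apply Lemma \ref{lemma-haarmeasure} with $V=\g$, $S=\INT(C)$ (open and convex), and the compact group $K\subseteq\mathrm{GL}(\g)$ leaving $S$ invariant. The lemma produces a $K$-fixed point $X_0\in\INT(C)$, and by the equivalence above $X_0\in\mathscr Z_\g(\germ k)$. Hence $\mathscr Z_\g(\germ k)\cap\INT(C)\neq\emptyset$. This direction is immediate once Lemma \ref{lemma-haarmeasure} is available.

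For the reverse implication, fix $X_0\in\mathscr Z_\g(\germ k)\cap\INT(C)$. Then $K$ fixes $X_0$ and preserves $C$, so $K$ is contained in the stabilizer $H=\{g\in\mathrm{GL}(\g):gC=C,\ gX_0=X_0\}$. Since $\Aut(\g)$ is closed in $\mathrm{GL}(\g)$, the group $K$ is closed in $\mathrm{GL}(\g)$, so it suffices to show that $H$ is compact; then $K$, being a closed subset of a compact set, is compact, i.e. $\germ k$ is compactly embedded. The crux is therefore the purely cone-theoretic statement that the group of linear automorphisms of a pointed generating cone fixing an interior point is compact, and this is where the hypotheses on $C$ are genuinely used.

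To prove $H$ compact I would show it is bounded in $\mathrm{End}(\g)$ with uniformly bounded inverses, so that its closure is a compact subgroup of $\mathrm{GL}(\g)$. Choose a norm $\|\cdot\|$ on $\g$ with closed unit ball $\overline B$, an $r>0$ with $X_0+r\overline B\subseteq C$ (possible since $X_0\in\INT(C)$), and a functional $\lambda\in\INT(C^\star)$, which exists because pointedness of $C$ makes $C^\star$ generating; compactness of the unit sphere then yields $c>0$ with $\lambda(x)\geq c\|x\|$ for all $x\in C$. For $g\in H$ and $\|x\|\leq r$, both $X_0\pm x$ lie in $C$, hence so do $X_0\pm gx=g(X_0\pm x)$; applying $\lambda$ gives $|\lambda(gx)|\leq\lambda(X_0)$ and thus $\|X_0\pm gx\|\leq 2\lambda(X_0)/c$, whence $\|gx\|$ is bounded independently of $g$ and of $x$. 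This bounds $\|g\|$ uniformly on $H$, and the same argument applied to $g^{-1}\in H$ bounds the inverses, so any limit of elements of $H$ is again invertible and $\overline H$ is compact. The main obstacle is precisely this compactness of $H$: the forward direction is a one-line consequence of Lemma \ref{lemma-haarmeasure}, whereas the reverse direction rests on the fact that pointedness and generation together force the cone-automorphisms fixing an interior point to form a bounded, hence relatively compact, family.
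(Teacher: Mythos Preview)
Your proof is correct and follows essentially the same approach as the paper. The forward direction is identical (Lemma~\ref{lemma-haarmeasure} applied to the open convex set $\INT(C)$). For the reverse direction the paper reaches the same conclusion with a slightly slicker device: instead of choosing $\lambda\in\INT(C^\star)$ and computing norm bounds, it simply observes that $C\cap(X_0-C)$ is a compact $K$-invariant subset of $\g$ with nonempty interior, which immediately forces $K$ to be bounded in $\mathrm{GL}(\g)$; your functional argument is exactly what underlies the compactness of this intersection.
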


\begin{proof}
If $\germ k \subseteq \germ g$ is compactly embedded in $\germ g$
then Lemma~\ref{lemma-haarmeasure} implies that 
$\INT(C)$ contains fixed points for $\mathrm{Inn}_\germ g(\germ k)$, i.e., 
\begin{equation}
  \label{eq:intersect}
\mathscr Z_\germ g(\germ k)\cap\INT(C)\neq\emptyset.
\end{equation}
Conversely, if 
$\mathscr Z_\germ g(\germ k) \cap \INT(C) \neq\emptyset$
then set $K=\mathrm{Inn}_\germ g(\germ k)$ and observe that 
$K$ is a subgroup of $\mathrm{Inn}(\germ g)$ 
with a fixed point 
$X_0\in \INT(C)$. The set $C \cap (X_0 - C)$ is a compact $K$-invariant 
subset of $\germ g$ with interior points. This implies that $K$ 
is bounded in $\mathrm{GL}(\germ g)$ and therefore it has compact closure
in $\mathrm{Aut}(\germ g)$. 
\end{proof}

\subsection{Compactly embedded Cartan subalgebras}

Let $\germ g$ be a finite dimensional Lie algebra over $\mathbb R$. 
Our next goal is to show that the existence of a pointed generating invariant cone in $\germ g$
implies that $\germ g$ has compactly embedded Cartan subalgebras.
The next lemma shows how such a Cartan subalgebra can be obtained explicitly.

\begin{lemma}
\label{lemma-compact-cartan}
Let $C\subseteq\germ g$ be a pointed generating invariant cone. Suppose
that 
$Y\in\INT(C)$ is a regular element of $\germ g$, i.e., 
the subspace
\[
\mathscr N_\germ g(Y)=\bigcup_n \ker(\mathrm{ad}(Y)^n)
\] 
has minimal dimension. If 
$\germ t=\ker(\mathrm{ad}(Y))$, then $\germ t$ is a Cartan subalgebra of $\germ g$ which is 
compactly embedded in $\germ g$.
\end{lemma}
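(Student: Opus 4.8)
The plan is to reduce everything to the \emph{ellipticity} of $\ad(Y)$, i.e.\ to semisimplicity of $\ad(Y)$ with purely imaginary spectrum; once this is in hand the rest is classical. The decisive observation is that $Y$ lies in $\INT(C)$ and commutes with the one-dimensional subalgebra $\mathbb{R}Y$, so that $Y\in\mathscr Z_\germ g(\mathbb{R}Y)\cap\INT(C)$ and in particular this intersection is nonempty. Applying Lemma~\ref{lemma-compactness} to the abelian subalgebra $\germ k=\mathbb{R}Y$ then shows that $\mathbb{R}Y$ is compactly embedded in $\germ g$; equivalently, the closure of $\langle e^{t\,\ad(Y)} : t\in\mathbb{R}\rangle$ in $\Aut(\germ g)$ is compact.

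From relative compactness of this one-parameter group I would deduce that $\ad(Y)$ is semisimple with purely imaginary spectrum: an eigenvalue with nonzero real part would make $e^{t\,\ad(Y)}$ grow or decay exponentially as $t\to\pm\infty$, and a Jordan block of size at least two (even for an imaginary eigenvalue) would produce polynomial growth in $t$, each contradicting boundedness. Hence $\ad(Y)$ has trivial nilpotent part, and its generalized null space collapses onto its kernel:
\[
\mathscr N_\germ g(Y)=\bigcup_n\ker(\ad(Y)^n)=\ker(\ad(Y))=\germ t.
\]

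Since $Y$ is regular by hypothesis, the classical theory of Cartan subalgebras (see \cite[Chap.~VII]{bourbaki}) identifies its Fitting null component $\mathscr N_\germ g(Y)$ as a nilpotent, self-normalizing subalgebra, i.e.\ a Cartan subalgebra of $\germ g$; by the previous display this is precisely $\germ t$. Now that $\germ t$ is known to be a subalgebra, I would feed it back into Lemma~\ref{lemma-compactness}: every $Z\in\germ t=\ker(\ad(Y))$ satisfies $[Z,Y]=0$, so $Y\in\mathscr Z_\germ g(\germ t)$, whence $\mathscr Z_\germ g(\germ t)\cap\INT(C)\neq\emptyset$, and the lemma yields that $\germ t$ is compactly embedded in $\germ g$.

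The two appeals to Lemma~\ref{lemma-compactness} and the Fitting-component characterization of Cartan subalgebras are routine. The step carrying the genuine content, and the one I would treat most carefully, is the passage from relative compactness of the one-parameter group to semisimplicity of $\ad(Y)$ with imaginary spectrum: it is exactly this ellipticity that forces the generalized null space down to $\ker(\ad(Y))$ and makes the deceptively simple definition $\germ t=\ker(\ad(Y))$ coincide with the classical construction of a Cartan subalgebra.
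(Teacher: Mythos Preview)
Your proof is correct and follows essentially the same route as the paper: use Lemma~\ref{lemma-compactness} with the witness $Y\in\INT(C)$ to get compact embedding, deduce semisimplicity of $\ad(Y)$ so that $\mathscr N_\germ g(Y)=\ker(\ad(Y))=\germ t$, and invoke the regular-element characterization of Cartan subalgebras. The only cosmetic difference is that the paper applies Lemma~\ref{lemma-compactness} once to $\germ k=\mathscr Z_\germ g(\mathbb R Y)$ (which, after semisimplicity, equals $\germ t$), whereas you apply it twice, to $\mathbb R Y$ and then to $\germ t$; the content is the same.
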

\begin{proof}
For any such $Y$, the subspace 
$\germ t=\mathscr N_\germ g(Y)$ is a Cartan subalgebra of $\germ g$
(see \cite[Chap. VII]{bourbaki}). 
Since $Y\in\mathscr Z_\germ g(\mathscr Z_\germ g(\mathbb R Y))$, Lemma \ref{lemma-compactness} 
implies that
$\mathscr Z_\germ g(\mathbb R Y)$
is compactly embedded in $\germ g$. It follows immediately that
$\mathbb RY$ is compactly embedded in $\germ g$. 
Therefore the endomorphism 
$\mathrm{ad}(Y):\germ g\to\germ g$ is semisimple and
\[
\germ t=\ker(\mathrm{ad}(Y))=\mathscr Z_\germ g(Y)
\]
from which it follows that $\germ t$ is compactly embedded in $\germ g$. 
\end{proof}

\begin{remark}
It is known that the set 
of regular elements of $\germ g$
is dense (see \cite[Chap. VII]{bourbaki}). 
Since $\INT(C)\neq\emptyset$, the intersection of 
$\INT(C)$ with the set of regular elements of $\germ g$ is nonempty.
\end{remark}

\subsection{Characterization of Lie algebras with invariant cones}
\label{section-characterization}
The material in this section is 
meant to shed light on the connection between invariant cones and
Hermitian Lie algebras. The reader is assumed to be familiar
with the classification of real semisimple Lie algebras.

The study of invariant cones in finite-dimensional Lie algebras 
was initiated by B.~Kostant, I.~E.~Segal and E.~B.~Vinberg \cite{Se76}, 
\cite{Vin80}. A structure theory of invariant cones in general 
finite dimensional Lie algebras was developed by Hilgert and Hofmann 
in \cite{HiHo89}. 
The characterization of those finite dimensional Lie algebras 
containing pointed generating invariant cones was obtained 
in \cite{Ne94} in terms of certain symplectic modules called 
of convex type, whose classification can be found in  \cite{Neu00}. 
A self-contained exposition of this theory is available in \cite{Ne00},
where the Lie algebras $\germ g$ for which there exist pointed generating invariant 
cones in
$\germ g \oplus \mathbb R$ are called \emph{admissible}.

\begin{example}\,(\textit{cf.} \cite{Vin80})
Suppose that $\germ g$ is a real simple Lie algebra 
with a Cartan decomposition $\germ g = \germ k \oplus \germ p$. 
Since $\germ p$ is a simple nontrivial $\germ k$-module, 
$\mathscr Z_\germ g(\germ k) = \mathscr Z(\germ k)$. 
If $C$ is a pointed generating invariant cone in 
$\germ g$, then from Lemma \ref{lemma-compactness} it follows that 
\[ 
\INT(C) \cap \mathscr Z(\germ k) \neq\emptyset.
\] 
In particular $\mathscr Z(\germ k) \neq\{0\}$, i.e., 
$\germ g$ is \emph{Hermitian}. Conversely, 
assume that $\germ g$ is Hermitian and $0 \neq Z \in \mathscr Z(\germ k)$. 
If $(\cdot,\cdot)$ denotes the Killing form of $\germ g$, then from
the Cartan decomposition $\mathrm{Inn}(\germ g) = \mathrm{Inn}(\germ k)e^{\mathrm{ad}( \germ p)}$ 
it follows that
\[ (\mathrm{Inn}(\germ g)Z, Z) 
= (e^{\mathrm{ad} (\germ p)}Z,Z). 
\] 
If $P\in\germ p$ then $(e^{\mathrm{ad}(P)}Z,Z)<0$ because
\[
(e^{\mathrm{ad}(P)}Z,Z)=\sum_{n=0}^\infty\frac{(\mathrm{ad}(P)^{2n}(Z),Z)}{(2n)!}
\]
and the linear transformations $\mathrm{ad}(P)^{2n}:\germ k\to\germ k$ are
positive definite with respect to $(\cdot,\cdot)$. 
It follows that $\mathrm{Inn}(\germ g)Z$ lies in a proper invariant 
cone $C \subseteq \germ g$. Since $\germ g$ is simple, 
$C$ is pointed and generating. 
\end{example}

A slight refinement of the above arguments shows that 
a reductive Lie algebra $\germ g$ is admissible if and only if 
$\mathscr Z_\germ g(\mathscr Z(\germ k)) = \germ k$ holds for a maximal 
compactly embedded subalgebra $\germ k$ of $\germ g$. 
Lie algebras satisfying this property are called {\it quasihermitian}. 
This is equivalent to 
all simple ideals of $\germ g$ being either compact or Hermitian. 
A reductive admissible Lie algebra contains pointed generating 
invariant cones if and only if it is not compact semisimple. 
This clarifies the structure of reductive Lie algebras with 
invariant cones. 

Below we shall need the following lemma.

\begin{lemma} \label{lem:qherm} Let $\g$ be a quasihermitian Lie algebra, 
$\fk \subeq \g$ be a maximal compactly embedded subalgebra of $\germ g$, and  
and $p_\fz \: \g \to\mathscr Z(\fk)$ be the fixed point projection 
for the compact group $e^{\ad \fk}$. 
Then every closed invariant convex subset $C \subeq \g$ 
satisfies $p_\z(C) = C \cap \mathscr Z(\fk)$. 
\end{lemma}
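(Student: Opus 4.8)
The plan is to prove the two inclusions $C \cap \mathscr Z(\fk) \subseteq p_\fz(C)$ and $p_\fz(C) \subseteq C \cap \mathscr Z(\fk)$ separately, with essentially all of the content lying in the second. First I would record a concrete description of $p_\fz$. Writing $K = e^{\ad\fk}$ for the compact group in question and $\mu_K$ for its normalized Haar measure, the fixed-point projection is realized by averaging over $K$: for $X \in \g$,
\[
p_\fz(X) = \int_K k\cdot X\, d\mu_K(k),
\]
and its image is the fixed-point space $\g^K = \mathscr Z_\g(\fk)$, which coincides with $\mathscr Z(\fk)$ by the quasihermitian hypothesis. In particular $p_\fz$ restricts to the identity on $\mathscr Z(\fk)$ and satisfies $p_\fz(\g) = \mathscr Z(\fk)$.

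The inclusion $C \cap \mathscr Z(\fk) \subseteq p_\fz(C)$ is then immediate: any $X \in C \cap \mathscr Z(\fk)$ is fixed by $K$, so $p_\fz(X) = X \in p_\fz(C)$. For the reverse inclusion, the containment $p_\fz(C) \subseteq \mathscr Z(\fk)$ holds because $p_\fz(\g) = \mathscr Z(\fk)$, so the only thing left to check is $p_\fz(C) \subseteq C$. Here I would first note that $C$ is $K$-invariant: it is invariant under $\mathrm{Inn}(\g)$ and hence under $e^{\ad\fk}$ and the group it generates, and since $C$ is closed this invariance passes to the closure $K$. Consequently, for $X \in C$ the whole orbit $K\cdot X$ lies in $C$, and $p_\fz(X) = \int_K k\cdot X\, d\mu_K(k)$ is the barycenter of a probability measure supported on the closed convex set $C$; therefore $p_\fz(X) \in C$. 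This is precisely the averaging argument already used in the proof of Lemma~\ref{lemma-haarmeasure}. Combining the two inclusions yields $p_\fz(C) = C \cap \mathscr Z(\fk)$.

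The only genuine obstacle is this last step, that the barycenter $p_\fz(X)$ remains inside $C$; it rests on the closedness and convexity of $C$ together with the compactness of $K$, exactly the mechanism of Lemma~\ref{lemma-haarmeasure}. The quasihermitian hypothesis enters only through the identification $\g^K = \mathscr Z_\g(\fk) = \mathscr Z(\fk)$, which guarantees that $p_\fz$ lands in $\mathscr Z(\fk)$ as asserted; the remaining manipulations are purely formal.
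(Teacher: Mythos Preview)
Your proof is correct and follows essentially the same route as the paper: both use the quasihermitian condition to identify the $e^{\ad\fk}$-fixed points with $\mathscr Z(\fk)$ and then invoke the Haar-averaging argument of Lemma~\ref{lemma-haarmeasure} to see that $p_\fz(C)\subseteq C$. The only cosmetic difference is that the paper spells out the step $\mathscr Z_\g(\fk)=\mathscr Z(\fk)$ via a $\fk$-invariant complement $\fp$ (observing that $\fp$ has no nonzero trivial $\fk$-submodule), whereas you assert it directly; since $\mathscr Z_\g(\fk)\subseteq\mathscr Z_\g(\mathscr Z(\fk))=\fk$ is immediate from the definition of quasihermitian, this is harmless.
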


\begin{proof} Let $\fp \subeq \g$ be a $\fk$-invariant 
complement and recall that $\g$ is said to be quasihermitian if 
$\fk = \mathscr Z_\g(\mathscr Z(\fk))$. This condition implies in particular 
that $\fp$ contains no non-zero trivial $\fk$-submodule, so that 
$\mathscr Z_\g(\fk) = \mathscr Z(\fk)$. 
The assertion now follows from the proof of 
Lemma~\ref{lemma-haarmeasure}. 
\end{proof}

In the case of an arbitrary Lie algebra $\germ g$ having a pointed 
generating invariant cone, 
one can use Lemma \ref{lemma-abelianideal} to show that the 
maximal nilpotent ideal $\germ n$ of $\germ g$ 
is two-step nilpotent, i.e., a generalized 
Heisenberg algebra. Moreover, $\germ n$
clearly contains $\mathscr Z(\germ g)$, which is contained 
in any compactly 
embedded Cartan subalgebra $\germ t$ of $\germ g$. 
Let $\germ a \subseteq \germ t$ be a complement to $\mathscr Z(\germ g)$ 
and $\germ s$ be a $\germ t$-invariant Levi complement to $\germ n$ in 
$\germ g$ (which always exists), and
set $\germ l = \germ a \oplus \germ s$. Then $\germ l$ is reductive, 
$\germ g =\germ l\ltimes  \germ n$, and 
$\germ l$ is an admissible reductive Lie algebra  
(see \cite[Prop.~VII.1.9]{Ne00}). At this point the 
structure of $\germ n$ and $\germ l$ is quite clear. 
However, to derive a classification of Lie algebras with invariant cones
from this semidirect decomposition, 
one has to analyze the possibilities 
for the $\germ l$-module structure on $\germ n$ in some detail.
This is done in 
\cite{Ne94} and \cite{Neu00}. 

\section{Unitary representations and invariant cones}

A Lie supergroup $\mathcal G=(G,\germ g)$ is called \emph{\STAR -reduced} if 
for every nonzero $X\in\germ g$ there exists a unitary representation $(\pi,\rho^\pi,\mathcal H)$ of 
$\mathcal G$ such that $\rho^\pi(X)\neq 0$. Note that when $\germ g$ is simple, $\mathcal G$
is \STAR -reduced if and only if it has a nontrivial unitary representation.
In this section we study properties of \STAR -reduced Lie supergroups via methods based on the theory
of
invariant cones. We obtain necessary conditions for
a Lie supergroup $\mathcal G$ to be \STAR -reduced.
It turns out that these necessary conditions
are strong enough for the classification of \STAR -reduced simple Lie supergroups.

Let $\mathcal G=(G,\germ g)$ be an arbitrary Lie supergroup,
and let $(\pi,\rho^\pi,\mathscr H)$ be a unitary representation of 
$\mathcal G$. Fix an element $X\in\germ g_\ood$.
From
\[
\rho^\pi([X,X])=i[\rho^\pi(X),\rho^\pi(X)]=2i\rho^\pi(X)^2
\]
and the fact that the operator $\rho^\pi(X)$ is symmetric
it follows that 
\[
\langle i\rho^\pi([X,X])v,v\rangle\leq 0
\textrm{ for every }v\in\mathscr H^\infty.
\]
Let $\CONE(\mathcal G)$ denote the invariant cone in $\germ g_\eev$ which is 
generated by elements of the form $[X,X]$ where 
$X\in\germ g_\ood$. Linearity of $\rho^\pi$ implies that 
\begin{equation}
  \label{eq:dissi}
\langle i\rho^\pi(Y)v,v\rangle\leq 0\textrm{ for every }v\in\mathscr H^\infty
\textrm{ and every }Y\in\CONE(\mathcal G).
\end{equation}
This means that $\pi$ is $\CONE(\mathcal G)$-dissipative 
in the sense of \cite{Ne00}.  

\subsection{Properties of \STAR -reduced Lie supergroups }
\label{section-properties}
Unlike Lie groups, 
which are known to have faithful unitary representations, certain
Lie supergroups do not have such representations. The next proposition,
which is given in \cite[Lem. 4.1.1]{salmasian},
shows how this can happen. 
The proof of this proposition
is based on the fact that for every $X\in\germ g_1$,
the spectrum of $-i\rho^\pi([X,X])$ is nonnegative, so that a 
sum of such operators vanishes if and only if all summands vanish.

\begin{proposition}
\label{reducedlemma}
Let $(\pi,\rho^\pi,\mathscr H)$ be a unitary representation of $\mathcal G=(G,\germ g)$.
Suppose that 
elements
$
X_1,\ldots ,X_m\in\germ g_\ood
$ 
satisfy 
\[
[X_1,X_1]+\cdots+[X_m,X_m]=0.
\]
Then $\rho^\pi(X_1)=\cdots=\rho^\pi(X_m)=0$.
\end{proposition}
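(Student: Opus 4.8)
The plan is to exploit the positivity that the paper has just highlighted: for each $X \in \germ g_\ood$, the operator $-i\rho^\pi([X,X]) = 2\,(-i)\,i\,\rho^\pi(X)^2$ has nonnegative spectrum on $\mathscr H^\infty$. Indeed, from $\rho^\pi([X,X]) = 2i\,\rho^\pi(X)^2$ we get $-i\rho^\pi([X,X]) = 2\,\rho^\pi(X)^2$, and since $\rho^\pi(X)$ is symmetric (Definition \ref{defofunirep}(v)), for every $v \in \mathscr H^\infty$ we have
\[
\langle -i\rho^\pi([X,X]) v, v\rangle = 2\langle \rho^\pi(X)^2 v, v\rangle = 2\langle \rho^\pi(X) v, \rho^\pi(X) v\rangle = 2\|\rho^\pi(X)v\|^2 \geq 0.
\]
So the quadratic form associated to $-i\rho^\pi([X,X])$ is not merely nonnegative but is literally the squared norm of $\rho^\pi(X)v$.

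First I would apply this identity to each $X_j$ and sum. Using the $\mathbb R$-linearity of $\rho^\pi$ and the hypothesis $[X_1,X_1] + \cdots + [X_m,X_m] = 0$, for any fixed $v \in \mathscr H^\infty$ I obtain
\[
0 = \langle -i\rho^\pi\big(\textstyle\sum_{j=1}^m [X_j,X_j]\big) v, v\rangle = \sum_{j=1}^m \langle -i\rho^\pi([X_j,X_j]) v, v\rangle = 2\sum_{j=1}^m \|\rho^\pi(X_j) v\|^2.
\]
This is a sum of nonnegative real numbers equal to zero, so each summand vanishes: $\|\rho^\pi(X_j)v\|^2 = 0$, hence $\rho^\pi(X_j)v = 0$, for every $j$ and every $v \in \mathscr H^\infty$.

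Finally, since $\mathscr H^\infty$ is dense in $\mathscr H$ and each $\rho^\pi(X_j)$ vanishes on all of $\mathscr H^\infty$, I conclude $\rho^\pi(X_j) = 0$ as an operator on $\mathscr H^\infty$ for every $j$, which is exactly the assertion $\rho^\pi(X_1) = \cdots = \rho^\pi(X_m) = 0$.

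There is really no serious obstacle here: the argument is the standard ``a sum of squares is zero forces each to be zero'' trick, and the only points requiring care are bookkeeping ones. The one thing to be careful about is the sign and factor in passing from $\rho^\pi([X,X])$ to $\rho^\pi(X)^2$ — one must use Definition \ref{defofunirep}(iii), which gives $\rho^\pi(X)^2 + \rho^\pi(X)^2 = -i\rho^\pi([X,X])$ upon setting $Y = X$, i.e.\ $\rho^\pi(X)^2 = -\tfrac{i}{2}\rho^\pi([X,X])$, and then symmetry of $\rho^\pi(X)$ to identify $\langle \rho^\pi(X)^2 v, v\rangle$ with $\|\rho^\pi(X)v\|^2$. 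Everything takes place on the common invariant domain $\mathscr H^\infty$, so no subtleties about unbounded operators or domains intervene.
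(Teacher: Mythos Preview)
Your proof is correct and follows exactly the approach the paper indicates: use the identity $-i\rho^\pi([X,X]) = 2\rho^\pi(X)^2$ together with symmetry of $\rho^\pi(X)$ to see that $\langle -i\rho^\pi([X,X])v,v\rangle = 2\|\rho^\pi(X)v\|^2 \geq 0$, so a vanishing sum forces each term to vanish. The paper only sketches this (``a sum of such operators vanishes if and only if all summands vanish''), and you have filled in the details accurately.
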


The next proposition provides
necessary conditions for a Lie supergroup to be \STAR -reduced.

\begin{proposition} 
\label{propertiesofstarreduced}
If $\mathcal G=(G,\germ g)$ is \STAR -reduced, 
then the following statements hold.
\begin{description}[iiiiii]
\item[\rm(i)] $\CONE(\mathcal G)$ is pointed.
\item[\rm(ii)] 
For every $\lambda\in\INT(\CONE(\mathcal G)^\star)$, the symmetric bilinear form 
\[
\mathsf\Omega_\lambda:\germ g_\eev\times\germ g_\eev\to\mathbb R 
\quad \mbox{ defined by } \quad 
\mathsf\Omega_\lambda(X,Y)=\lambda([X,Y])
\]
is positive definite .
\item[\rm(iii)] Let $\germ k_\eev$ be a Lie subalgebra of $\germ g_\eev$. If $\germ k_\eev$ is compactly embedded in
$\germ g_\eev$, then $\germ k_\eev$ is compactly embedded in $\germ g$ as well.
\item[\rm(iv)] If $\germ g_\eev=[\germ g_\ood,\germ g_\ood]$ then
$\germ g_\eev$ has a Cartan subalgebra which is compactly embedded in $\germ g$. 
\item[\rm(v)] Assume that there exists a Cartan subalgebra $\germ h_\eev$ of 
$\germ g_\eev$ which is compactly embedded in $\germ g$. Let 
$
\mathbf p:\germ g_\eev\to\germ h_\eev
$
be the projection map corresponding to the decomposition 
\[
\germ g_\eev=\germ h_\eev\oplus[\germ h_\eev,\germ g_\eev]
\] 
(see Proposition \ref{propsupercompactcartan})
and $\mathbf p^*:\germ h_\eev^*\to\germ g_\eev^*$ be
the corresponding dual map.
Then
\[
\INT(\CONE(\mathcal G)^\star)\cap\mathbf p^*(\germ h_\eev^*)\neq \emptyset.
\]
\end{description}
\end{proposition}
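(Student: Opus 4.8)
The plan is to run everything off the single dissipativity estimate \eqref{eq:dissi} together with \STAR-reducedness, feeding the output of each part into the next. For (i), suppose $Y\in\CONE(\mathcal G)\cap-\CONE(\mathcal G)$. Applying \eqref{eq:dissi} to both $Y$ and $-Y$ in an arbitrary unitary representation $(\pi,\rho^\pi,\mathscr H)$ forces $\langle i\rho^\pi(Y)v,v\rangle=0$ for all $v\in\mathscr H^\infty$. Since $Y\in\germ g_\eev$, the operator $\rho^\pi(Y)=\dd\pi(Y)$ is skew-adjoint, so $i\rho^\pi(Y)$ is symmetric with identically vanishing quadratic form, whence $\rho^\pi(Y)=0$ by polarization. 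As this holds in every representation, \STAR-reducedness forces $Y=0$, so $\CONE(\mathcal G)$ is pointed.

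For (ii) I first record that the superbracket of two odd elements is symmetric, so $\mathsf\Omega_\lambda(X,Y)=\lambda([X,Y])$ is a genuine symmetric form on the odd part $\germ g_\ood$ (the only part on which positivity can hold), and each $[X,X]$ lies in $\CONE(\mathcal G)$, giving $\mathsf\Omega_\lambda(X,X)\ge0$ for every $\lambda\in\CONE(\mathcal G)^\star$. The real work is to upgrade semidefiniteness to definiteness, and this is the main obstacle. Because $\CONE(\mathcal G)$ is pointed by (i), its dual is generating and the standard cone-duality description gives $\lambda(c)>0$ for all $c\in\CONE(\mathcal G)\setminus\{0\}$ whenever $\lambda\in\INT(\CONE(\mathcal G)^\star)$. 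It therefore suffices to know that $[X,X]\neq0$ for $X\neq0$; but this is precisely the $m=1$ case of Proposition \ref{reducedlemma} together with \STAR-reducedness, since $[X,X]=0$ would force $\rho^\pi(X)=0$ in every representation and hence $X=0$. Thus $\mathsf\Omega_\lambda(X,X)>0$ for all $X\neq0$, i.e. $\mathsf\Omega_\lambda$ is positive definite.

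The engine behind (iii) and (v) is an averaging construction producing $\ad$-invariant functionals inside $\INT(\CONE(\mathcal G)^\star)$. Given a subalgebra $\germ k_\eev$ compactly embedded in $\germ g_\eev$, let $K$ be the compact closure of $\langle e^{\ad\germ k_\eev}\rangle$ acting on $\germ g_\eev$; it preserves $\CONE(\mathcal G)$ and hence the open convex set $\INT(\CONE(\mathcal G)^\star)$ under the contragredient action, so Lemma \ref{lemma-haarmeasure} yields a $K$-fixed $\lambda_0\in\INT(\CONE(\mathcal G)^\star)$, i.e. $\lambda_0\circ\ad(Z)=0$ on $\germ g_\eev$ for all $Z\in\germ k_\eev$. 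The graded Jacobi identity then gives $\mathsf\Omega_{\lambda_0}(\ad(Z)X,Y)+\mathsf\Omega_{\lambda_0}(X,\ad(Z)Y)=\lambda_0([Z,[X,Y]])=0$, so by (ii) $\mathsf\Omega_{\lambda_0}$ is an $\ad(\germ k_\eev)$-invariant positive definite form on $\germ g_\ood$. Hence $e^{\ad\germ k_\eev}$ acts orthogonally on $\germ g_\ood$ and, by hypothesis, with compact closure on $\germ g_\eev$; as these actions are block-diagonal on $\germ g_\eev\oplus\germ g_\ood$, the closure in $\mathrm{GL}(\germ g)$ is compact, proving (iii). Part (v) is the same argument run for the given compactly embedded Cartan $\germ h_\eev$: the resulting $K$-fixed $\lambda_0$ satisfies $\lambda_0([\germ h_\eev,\germ g_\eev])=0$, so it annihilates $\ker\mathbf p$ and therefore lies in $\mathbf p^*(\germ h_\eev^*)$, giving the required nonempty intersection.

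Finally, for (iv) polarization gives $[X,Y]=\tfrac12\big([X+Y,X+Y]-[X,X]-[Y,Y]\big)$ for odd $X,Y$, so the hypothesis $\germ g_\eev=[\germ g_\ood,\germ g_\ood]$ yields $\CONE(\mathcal G)-\CONE(\mathcal G)=\germ g_\eev$; together with (i) this makes $\CONE(\mathcal G)$ a pointed generating invariant cone in $\germ g_\eev$. Lemma \ref{lemma-compact-cartan}, applied with $\germ g_\eev$ in place of $\germ g$, then produces a Cartan subalgebra $\germ t=\ker(\ad Y)$ (for a regular $Y\in\INT(\CONE(\mathcal G))$) that is compactly embedded in $\germ g_\eev$, and (iii) upgrades this to compact embedding in $\germ g$, as desired.
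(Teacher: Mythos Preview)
Your proof is correct and follows essentially the same route as the paper's: dissipativity plus polarization for (i); Proposition~\ref{reducedlemma} with the strict positivity characterization of $\INT(\CONE(\mathcal G)^\star)$ for (ii); the Haar-averaging fixed point in $\INT(\CONE(\mathcal G)^\star)$ producing an invariant positive definite form on $\g_\ood$ for (iii) and (v); and pointed $+$ generating $+$ Lemma~\ref{lemma-compact-cartan} for (iv). You also correctly identify that the form $\mathsf\Omega_\lambda$ in (ii) must live on $\g_\ood\times\g_\ood$ rather than $\g_\eev\times\g_\eev$ as printed, and you supply a couple of details (the Jacobi identity step making $\mathsf\Omega_{\lambda_0}$ invariant under $\ad(\fk_\eev)$, and the polarization identity showing $\CONE(\mathcal G)$ spans) that the paper leaves implicit.
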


\begin{proof}
(i) Suppose, on the contrary, that $Y,-Y\in\CONE(\mathcal G)$ for some nonzero $Y$.
Let $(\pi,\rho^\pi,\mathscr H)$ be a unitary representation of $\mathcal G$. 
For every $v\in\mathscr H^\infty$,
\[
0\leq\langle i\rho^\pi(Y)v,v\rangle\leq 0
\]
which implies that $\langle i\rho^\pi(Y)v,v\rangle=0$. Therefore  
for every $v,w\in\mathscr H^\infty$ and every $z\in\mathbb C$,
\begin{equation*}
\begin{split}
0&=\langle \big(i\rho^\pi(Y)\big)(v+zw),v+zw\rangle\\
&=\langle i\rho^\pi(Y)v,v\rangle
+\overline z\langle i\rho^\pi(Y)v,w\rangle
+z\langle i\rho^\pi(Y)w,v\rangle
+|z|^2\langle i\rho^\pi(Y)w,w\rangle \\
&=\overline z\langle i\rho^\pi(Y)v,w\rangle+
z\langle i\rho^\pi(Y)w,v\rangle
\end{split}
\end{equation*}
and since $z$ is arbitrary, $\langle i\rho^\pi(Y)v,w\rangle=0$ 
for every $v,w\in\mathscr H^\infty$. This means that 
$\rho^\pi(Y)=0$, hence $Y  = 0$ because $\cG$ is $\star$-reduced. 

(ii) That $\mathsf\Omega_\lambda$ is positive semidefinite is immediate from the definition of 
$\CONE(\mathcal G)^\star$.
If $X\in\germ g_1$ satisfies $\mathsf\Omega_\lambda(X,X)=0$ then 
from $\lambda\in\INT(\CONE(\mathcal G)^\star)$ it follows that $[X,X]=0$.
Since $\mathcal G$ is \STAR -reduced, Proposition \ref{reducedlemma} implies that $X=0$.

(iii) Part (i) implies that $\CONE(\mathcal G)$ is pointed,
and therefore $\INT(\CONE(\mathcal G)^\star)$ is nonempty \cite[Prop. V.1.5]{Ne00}.
The action of the compact group $\mathrm{INN}_{\germ g_\eev}(\germ k_\eev)$ on $\CONE(\mathcal G)^\star$
leaves
$\INT(\CONE(\mathcal G)^\star)$ invariant. 
By Lemma~\ref{lemma-haarmeasure}, this action
has a fixed point $\lambda\in\INT(\CONE(\mathcal G)^\star)$. Therefore the symmetric 
bilinear form $\mathsf\Omega_\lambda$ of Part (ii) 
is positive definite and invariant with respect to 
$\mathrm{INN}_{\germ g}(\germ k_\eev)$.
From 
the inclusion $\mathrm{Aut}(\germ g)\subseteq\mathrm{Aut}(\germ g_\eev)\times\mathrm{GL}(\germ g_\ood)$
it follows that $\mathrm{INN}_\germ g(\germ k_\eev)$ is compact.

(iv) By Part (iii) it is enough to prove the existence of 
a Cartan subalgebra
which is compactly embedded in $\germ g_\eev$.
 Part (i) implies that $\CONE(\mathcal G)$ is
pointed.
The equality $\germ g_\eev=[\germ g_\ood,\germ g_\ood]$ means 
that  $\CONE(\mathcal G)$ is generating.  
Therefore Lemma~\ref{lemma-compact-cartan} completes the proof. 

(v) Part (i) implies that $\INT(\CONE(\mathcal G)^\star)\neq\emptyset$.
Since $\mathrm{INN}_{\germ g_\eev}(\germ h_\eev)$ is compact
and leaves 
$\INT(\CONE(\mathcal G)^\star)$
invariant, Lemma \ref{lemma-haarmeasure} implies  
that there exists a $\mu\in\INT(\CONE(\mathcal G)^\star)$ which is fixed by 
$\mathrm{INN}_{\germ g_\eev}(\germ h_\eev)$, i.e., 
contained in $\mathbf p^*(\germ h_\eev^*)$. 
\end{proof}

\begin{proposition}
\label{prop-computable-criterion}
Suppose that $\mathcal G=(G,\germ g)$ is a \STAR -reduced Lie supergroup.
Let 
\begin{description}[iiiiii]
\item[\rm(i)] $\germ h_\eev$ be a Cartan subalgebra of 
$\germ g_\eev$ which is compactly embedded in $\germ g$,
\item[\rm(ii)] $\Delta$ be the root system associated to $\germ h_\eev$
(see Proposition \ref{propsupercompactcartan}),   
\item[\rm(iii)] $\mu\in\INT(\CONE(\mathcal G)^\star)\cap\mathbf p^*(\germ h_\eev^*)$, where
$\mathbf p^*$ is the map defined in the statement of Proposition \ref{propertiesofstarreduced}.
\end{description}
Then for every nonzero $\alpha\in\Delta$ the Hermitian form 
\[
\langle\cdot,\cdot\rangle_\alpha:\germ g_\ood^{\mathbb C,\alpha}
\times
\germ g_\ood^{\mathbb C,\alpha}\to \mathbb C
\]
defined by $\langle X,Y\rangle_\alpha=\mu([X,\overline Y])$
is positive definite.
\end{proposition}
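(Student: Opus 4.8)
The plan is to show positive definiteness by relating the Hermitian form $\langle X, Y\rangle_\alpha = \mu([X,\overline Y])$ to the dissipativity condition coming from $\star$-reducedness. First I would fix a nonzero $\alpha \in \Delta$ and take a nonzero $X \in \germ g_\ood^{\mathbb C,\alpha}$, aiming to prove $\langle X, X\rangle_\alpha = \mu([X,\overline X]) > 0$. The natural move is to reduce from the complexification back to the real form, where Proposition~\ref{propertiesofstarreduced}(ii) already gives a positive definite bilinear form $\mathsf\Omega_\lambda(U,V) = \lambda([U,V])$ on $\germ g_\eev$ for $\lambda \in \INT(\CONE(\mathcal G)^\star)$. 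The key observation should be that $\mu$ plays exactly the role of such a $\lambda$: by hypothesis (iii), $\mu \in \INT(\CONE(\mathcal G)^\star)$, so $\mathsf\Omega_\mu$ is positive definite on $\germ g_\eev$.

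The central computation is to connect $[X,\overline X]$ with the real structure. Writing $X = U + iV$ with $U, V \in \germ g_\ood$, I would compute $\overline X = \tau(X)$, and note that by Proposition~\ref{propsupercompactcartan}(iii) the conjugation $\tau$ sends $\germ g^{\mathbb C,\alpha}$ to $\germ g^{\mathbb C,-\alpha}$, so $[X,\overline X] \in \germ g^{\mathbb C,0}$, and in fact lands in $\germ g_\eev^{\mathbb C}$ since the superbracket of two odd elements is even. The plan is to expand $[X,\overline X] = [U+iV, U-iV] = -i[U,V] + i[V,U] + [U,U] + [V,V]$; using the graded symmetry $[U,V]=[V,U]$ for odd elements under the superbracket, the cross terms should combine, leaving a real combination $[U,U]+[V,V]$ plus an imaginary part that $\mu$ annihilates (or that can be controlled). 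The upshot I expect is that $\mu([X,\overline X])$ equals $\mu$ applied to a nonnegative-type element of $\CONE(\mathcal G)$, namely a sum $[U,U]+[V,V]$ of generators of $\CONE(\mathcal G)$.

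Once the bracket is reduced to the form $[U,U]+[V,V]$, positive definiteness follows because $\mu \in \INT(\CONE(\mathcal G)^\star)$: for a generator $[W,W]$ with $W \in \germ g_\ood$ nonzero, strict positivity $\mu([W,W]) > 0$ is exactly the content used in the proof of Proposition~\ref{propertiesofstarreduced}(ii), where $\mathsf\Omega_\mu(W,W) = 0$ together with $\mu \in \INT(\CONE(\mathcal G)^\star)$ forced $[W,W]=0$ and hence $W=0$ by $\star$-reducedness via Proposition~\ref{reducedlemma}. Contrapositively, $W \neq 0$ gives $\mu([W,W]) > 0$. Thus if $X \neq 0$, then at least one of $U, V$ is nonzero, so $\mu([U,U]+[V,V]) > 0$, giving $\langle X, X\rangle_\alpha > 0$.

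The main obstacle I anticipate is the careful bookkeeping of signs in the expansion of $[X,\overline X]$ under the superbracket, ensuring that the imaginary cross-terms either cancel or are killed by $\mu$. Specifically, I must verify that the graded antisymmetry $[A,B] = -(-1)^{|A||B|}[B,A]$ for two odd elements yields $[A,B]=[B,A]$, and then track how the factors of $i$ from $X = U+iV$ interact so that the real part of $[X,\overline X]$ is genuinely a positive combination of cone generators. A secondary subtlety is confirming that $\mu$, a priori a functional on $\germ g_\eev$, extends $\mathbb C$-linearly to annihilate the purely imaginary part of $[X,\overline X]$; but since $[X,\overline X]$ is fixed by $\tau$ (being a bracket of an element with its conjugate, one checks $\tau([X,\overline X]) = [\overline X, X] = -[X,\overline X]$ up to sign, or rather is Hermitian-symmetric), the relevant part is real and lies in $\germ g_\eev$, so $\mu$ applies directly. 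Getting these reality and sign assertions exactly right is where the real work lies.
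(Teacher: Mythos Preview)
Your approach is correct and in fact more direct than the paper's. Once you expand $[X,\overline X]=[U+iV,U-iV]$ and use the symmetry $[U,V]=[V,U]$ of the superbracket on odd elements, the cross terms cancel exactly and you obtain $[X,\overline X]=[U,U]+[V,V]\in\germ g_\eev$, a real element. Then $\mu([X,\overline X])=\mu([U,U])+\mu([V,V])\geq 0$, and equality forces $[U,U]=[V,V]=0$ (since $\mu\in\INT(\CONE(\mathcal G)^\star)$), hence $U=V=0$ by $\star$-reducedness via Proposition~\ref{reducedlemma}. So all your sign and reality worries dissolve: the imaginary part is genuinely zero, not merely killed by~$\mu$.

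The paper proceeds differently. It expands $\mathsf\Omega_\mu(X+\overline X,\,X+\overline X)$, obtaining four terms $\mu([X,X])+2\mu([X,\overline X])+\mu([\overline X,\overline X])$, and then invokes the hypothesis $\mu\in\mathbf p^*(\germ h_\eev^*)$ together with $[X,X]\in\germ g_\eev^{\mathbb C,2\alpha}$ and $\alpha\neq 0$ to kill the first and last terms. For strict positivity, the paper argues that $\mu([X,\overline X])=0$ gives $X+\overline X=0$, i.e.\ $iX\in\germ g$, and derives a contradiction from $[H,iX]=-\alpha(H)X\notin\germ g$ for suitable $H\in\germ h_\eev$. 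Thus the paper's argument genuinely uses both the $\mathbf p^*(\germ h_\eev^*)$ assumption and the nonvanishing of $\alpha$, whereas your route bypasses both and proves the stronger statement that the Hermitian extension of $\mathsf\Omega_\mu$ is positive definite on all of $\germ g_\ood^{\mathbb C}$, not just on individual root spaces. Your argument is the cleaner one here.
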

\begin{proof}
Let 
\[
\mathsf\Omega_\mu:\germ g_\ood\times\germ g_\ood\to\mathbb R
\] 
be the symmetric bilinear form defined by 
\[
\mathsf\Omega_\mu(X,Y)=\mu([X,Y]).
\]
By Proposition \ref{propertiesofstarreduced}(ii) the form $\mathsf\Omega_\mu$ is positive definite. 
If $X\in\germ g_\ood^{\mathbb C,\alpha}$ then $\overline X\in\germ g_\ood^{\mathbb C,-\alpha}$
and 
\begin{equation*}
\begin{split}
\mathsf\Omega_\mu(X+\overline X,X+\overline X)&=
\mu([X+\overline X,X+\overline X])\\
&=\mu([X,X])+\mu([X,\overline X])+\mu([\overline X,X])+
\mu([\overline X,\overline X]).
\end{split}
\end{equation*}
But $[X,X]\in\germ g_\eev^{\mathbb C,2\alpha}$ and  
$[\overline X,\overline X]\in\germ g_\eev^{\mathbb C,-2\alpha}$, and from
$\mu\in\mathbf p^*(\germ h_\eev^*)$ and $\alpha\neq 0$
it follows that 
\[
\mu([X,X])=\mu([\overline X,\overline X])=0.
\] 
Consequently
\[
\mu([X,\overline X])=\frac{1}{2}\mathsf\Omega_\mu(X+\overline X,X+\overline X)\geq 0,
\]
and $\mu(X,\overline X])=0$ implies that $X=0$.

Moreover, if $\mu([X,\overline{X}])=0$ then $\mathsf\Omega_\mu(X+\overline X,X+\overline X)$
from which it follows that $X+\overline{X}=0$. This means that $i X\in\germ g$, hence 
$[\germ h_\eev,iX]\subseteq\germ g$.
However, if $H\in\germ h_\eev$ is chosen such that $\alpha(H)\neq 0$, then
\[
[H,iX]=i[H,X]=i^2\alpha(H)X=-\alpha(H)X
\]
and this yields a contradiction because clearly $-\alpha(H)X\notin\germ g$.

\end{proof}

\subsection{Application to real simple Lie superalgebras}
Let $\mathcal G=(G,\germ g)$ be a Lie supergroup such that $G$ is connected and 
$\germ g$ is a real simple Lie superalgebra with nontrivial odd part.
Assume that $\mathcal G$ has 
nontrivial unitary representations. The goal of this section is use
the necessary conditions obtained in Section \ref{section-properties}
to obtain strong conditions on $\germ g$.

Since $\germ g$ is simple, $\mathcal G$ 
will be \STAR -reduced and
Proposition~\ref{propertiesofstarreduced}(iv)
 implies that
$\germ g_\eev$ contains a compactly embedded Cartan subalgebra. In particular,
since complex simple Lie algebras do not have compactly embedded Cartan subalgebras, 
$\germ g$ should be a real form of a 
complex simple Lie superalgebra. However, as Theorem \ref{prop-simplecase}
below shows, for a large class of these real forms there are no nontrivial
unitary representations. For simplicity, we exclude the real forms of 
exceptional cases
$\mathbf G(3)$, $\mathbf F(4)$ and $\mathbf D(2|1,\alpha)$.

\begin{theorem}
\label{prop-simplecase}
If $\germ g$ is one of the following Lie superalgebras then 
$\mathcal G$ does not have any nontrivial unitary representations.
\begin{description}[iiiiiii]
\item[\rm(i)] $\germ{sl}(m|n,\mathbb R)$ where $m>2$ or $n>2$.
\item[\rm(ii)] $\germ{su}(p,q|r,s)$ where $p,q,r,s>0$.
\item[\rm(iii)] $\germ{su}^*(2p,2q)$ where $p,q>0$ and $p+q>2$.
\item[\rm(iv)] $\germ{p\overline{q}}(m)$ where $m>1$.
\item[\rm(v)] $\germ{usp}(m)$ where $m>1$.
\item[\rm(vi)] $\germ{osp}^*(m|p,q)$ where $p,q,m>0$.
\item[\rm(vii)] $\germ{osp}(p,q|2n)$ where $p,q,n>0$.
\item[\rm(viii)] Real forms of $\mathbf P(n)$, $n>1$.
\item[\rm(ix)] $\germ{psq}(n,\mathbb R)$ where $n>2$, $\germ{psq}^*(n)$ where $n>2$, and \\
$\germ{psq}(p,q)$, where $p,q>0$.
\item[\rm(x)] Real forms of $\mathbf W(n)$, $\mathbf S(n)$, and $\tilde{\mathbf S}(n)$.
\item[\rm(xi)] $\mathbf H(p,q)$ where $p+q>4$.
\end{description}
\end{theorem}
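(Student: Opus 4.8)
The theorem lists many real forms and asks us to rule out nontrivial unitary
representations for each. The unifying strategy is the computable criterion of
Proposition~\ref{prop-computable-criterion}: if $\mathcal G$ were
$\star$-reduced, then for a compactly embedded Cartan subalgebra
$\germ h_\eev \subseteq \germ g_\eev$ (which exists by
Proposition~\ref{propertiesofstarreduced}(iv), using simplicity so that
$\germ g_\eev = [\germ g_\ood, \germ g_\ood]$) there is a
$\mu \in \INT(\CONE(\mathcal G)^\star) \cap \mathbf p^*(\germ h_\eev^*)$ such
that for every nonzero root $\alpha$ the Hermitian form
$\langle X, Y\rangle_\alpha = \mu([X, \oline Y])$ on
$\germ g_\ood^{\mathbb C, \alpha}$ is positive definite. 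The plan is to derive a
contradiction from this positivity. The key observation is that positivity must
hold \emph{simultaneously} for $\alpha$ and $-\alpha$, since both are odd roots
whenever one is (by Proposition~\ref{propsupercompactcartan}(iii)); but
$\langle\cdot,\cdot\rangle_{-\alpha}(\oline X, \oline X) = \mu([\oline X, X])
= -\mu([X,\oline X]) = -\langle\cdot,\cdot\rangle_\alpha(X,X)$, so positive
definiteness at both $\alpha$ and $-\alpha$ forces
$\germ g_\ood^{\mathbb C,\alpha} = \{0\}$ unless the bracket
$[\germ g_\ood^{\mathbb C,\alpha}, \germ g_\ood^{\mathbb C,-\alpha}]$ is somehow
killed by $\mu$.

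\textbf{Reduction to a root computation.}
Thus for each listed $\germ g$ the concrete task is: exhibit an odd root
$\alpha$ (equivalently, an odd root vector $X \in \germ g_\ood$) for which the
above sign obstruction is unavoidable. I would first fix the compactly embedded
Cartan $\germ h_\eev$ and compute the odd root system $\Delta_\ood$ explicitly
in each case from the standard matrix realizations of these classical real
forms. For a root $\alpha$ to survive, the functional $\mu$ must satisfy
$\mu([X, \oline X]) > 0$, i.e. $\mu$ must evaluate positively on a specific even
element $[X,\oline X] \in \germ h_\eev$ (up to the projection
$\mathbf p$). Since $\mu \in \mathbf p^*(\germ h_\eev^*)$ and
$\mu \in \INT(\CONE(\mathcal G)^\star)$, the element
$[X, \oline X]$ must lie in the open dual cone direction. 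The contradiction
arises when the collection $\{[X,\oline X] : X \in \germ g_\ood^{\mathbb C,\alpha}\}$,
as $\alpha$ ranges over $\Delta_\ood$, spans a subspace on which no single linear
functional can be simultaneously positive, i.e.\ when these elements together
with their negatives generate all of $\germ h_\eev$ (so $\CONE(\mathcal G)$ is not
pointed, contradicting Proposition~\ref{propertiesofstarreduced}(i)). Concretely,
the rank and numerical conditions ($m > 2$, $p+q > 4$, etc.) are exactly the
thresholds at which the even anticommutators $[X,\oline X]$ of odd root vectors
fill out a cone containing an affine line.

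\textbf{Organizing the cases.}
Rather than treat eleven items independently, I would group them by the
structure of $\germ g_\eev$ and the $\germ g_\eev$-module $\germ g_\ood$. The
type $A$ family (i)--(iii) and (ix) share the pattern that $\germ g_\eev$ is (a
form of) $\germ{sl} \oplus \germ{sl} \oplus$ center or $\germ{sl}$ with
$\germ g_\ood$ a tensor/adjoint-type module; here the numerical condition
governs whether $\mathscr Z(\germ g_\eev)$ is large enough to support a pointed
invariant cone. The orthosymplectic and peculiar cases
(iv)--(viii) and the Cartan-type cases (x)--(xi) require separate module
analyses, but the mechanism is identical: one produces two odd root vectors
whose even brackets are negatives of each other modulo
$[\germ h_\eev, \germ g_\eev]$, violating pointedness of $\CONE(\mathcal G)$, or
one shows directly that $\mathsf\Omega_\mu$ of
Proposition~\ref{propertiesofstarreduced}(ii) cannot be positive definite on
$\germ g_\ood$ because its restriction to a suitable odd two-dimensional
subspace is indefinite.

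\textbf{Main obstacle.}
The hard part is not the conceptual mechanism but the case-by-case verification
that the relevant even brackets really do generate an affine line in
$\CONE(\mathcal G)$ in precisely the stated numerical ranges, and the careful
bookkeeping of which real form carries which signature on
$\mathsf\Omega_\mu$. In particular the boundary cases (the excluded small values
of $m, n, p, q$) are where a genuine pointed generating cone \emph{does} exist,
so the argument must be sharp enough to distinguish them; this is where I expect
the computation to be most delicate, and where appealing to the classification
tabulated in Table~1 together with the explicit root data is essential.
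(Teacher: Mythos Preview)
Your ``key observation'' contains a sign error that invalidates the proposed unifying mechanism. For odd elements the super bracket is \emph{symmetric}: $[\oline X, X] = [X, \oline X]$. Hence
\[
\langle \oline X, \oline X\rangle_{-\alpha} = \mu([\oline X, X]) = \mu([X, \oline X]) = \langle X, X\rangle_\alpha,
\]
not its negative. So positivity at $\alpha$ and at $-\alpha$ are perfectly compatible, and there is no automatic sign obstruction of the kind you describe. Your later idea---that the elements $[X,\oline X]$ may have zero in their convex hull, forcing $\CONE(\mathcal G)$ to be non-pointed---is correct and is exactly how the paper handles $\germ{su}(p,q|r,s)$, $\germ{osp}(p,q|2n)$, and $\germ{psq}(p,q)$; but this is a genuine case-by-case computation, not a consequence of any sign flip.

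More seriously, the paper does \emph{not} use a single mechanism. Several families are dispatched by entirely different arguments that your plan does not mention: (a) for $\germ{sl}(m|n,\mathbb R)$, $\germ{su}^*(2p|2q)$, $\germ{p\oline q}(m)$, $\germ{usp}(m)$, $\germ{psq}(n,\mathbb R)$, $\germ{psq}^*(n)$, the even part simply has no compactly embedded Cartan subalgebra, so Proposition~\ref{propertiesofstarreduced}(iv) kills these before any root computation begins; (b) for real forms of $\mathbf P(n)$, $\mathbf W(n)$, $\mathbf S(n)$, $\tilde{\mathbf S}(n)$, the complex root system fails to satisfy $\Delta = -\Delta$, contradicting Proposition~\ref{prop-symmetryofrootsystem}; (c) for $\mathbf H(p,q)$, the argument uses Lemma~\ref{lemma-abelianideal} on abelian ideals together with the $\mathbb Z$-grading of $\mathbf H(p+q)$; (d) for $\germ{osp}^*(m|p,q)$ one appeals directly to the classification of reductive Lie algebras with pointed generating invariant cones. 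Your proposal to ``group by the structure of $\germ g_\eev$'' and run a uniform root-vector computation would miss all of these shortcuts and, for (b) and (c), would not obviously succeed at all.
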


\begin{proof} 
Throught the proof, for every $n$ we denote the $n\times n$ identity matrix
by  $\mathrm I_n$, and 
set
\[
\mathrm{I}_{p,q}=\begin{bmatrix}
\mathrm I_p&0\\
0&-\mathrm I_q
\end{bmatrix}
\quad\textrm{and}\quad
\mathrm{J}_n=
\begin{bmatrix}
0&\mathrm I_n\\
-\mathrm I_n&0
\end{bmatrix}.
\]
(i) Since $[\g_{\ood}, \g_\ood] \cong \fsl(m,\R) \oplus \fsl(n,\R)$ 
has no compactly embedded Cartan subalgebra, this follows from 
Proposition \ref{propertiesofstarreduced}(iv).

(ii) In the standard realization  of $\germ{sl}(p+q|r+s,\mathbb C)$ as 
quadratic matrices of size $p+q + r+s$, 
$\germ{su}(p,q|r,s)$
can be described as
\[
\left\{
\begin{bmatrix}
A&B\\
C&D
\end{bmatrix}\in \fsl(p+q|r+s,\C) \bigg|\ 
\begin{bmatrix}
-\II_{p,q}A^*\II_{p,q}\ \ &i\II_{p,q}C^*\II_{r,s}\\
i\II_{r,s}B^*\II_{p,q}\ \ &-\II_{r,s}D^*\II_{r,s}
\end{bmatrix}
=
\begin{bmatrix}
A&B\\
C&D
\end{bmatrix}
\right\}.
\]
Suppose, on the contrary, that $\mathcal G$ is \STAR -reduced.
Proposition \ref{propertiesofstarreduced}(iii) implies that 
the diagonal matrices in $\germ{su}(p,q|r,s)$
constitute a Cartan subalgebra of $\germ{su}(p,q|r,s)_\eev$
which is compactly embedded in $\germ{su}(p,q|r,s)$. Let $\mu$ be chosen as in Propostion
\ref{prop-computable-criterion}. 
For every $a\leq r$ and $b\leq p$, the 
matrix
\[
X_{a,b}=
\begin{bmatrix}
0&0\\
\mathrm{E}_{a,b}&0
\end{bmatrix}
\]
is a root vector.
Let $\tau$ denote 
the complex conjugation corresponding to the above realization of $\germ{su}(p,q|r,s)$. 
One can easily check that
\[
\tau(X_{a,b})=
\begin{bmatrix}
0&i\mathrm{E}_{b,a}\\
0&0
\end{bmatrix}.
\]
Set $H_{a,b}=[X_{a,b},\tau(X_{a,b})]$. It is easily checked that
\[
H_{a,b}=
\begin{bmatrix}
i\mathrm{E}_{b,b}&0\\
0&i\mathrm{E}_{a,a}
\end{bmatrix}.
\]
For $a$ and $b$ there are three other possibilities to consider. 
If $a\leq r$ and $b>p$, or if $a>r$ and $b\leq p$, then
\[
H_{a,b}=
\begin{bmatrix}
-i\mathrm{E}_{b,b}&0\\
0&-i\mathrm{E}_{a,a}
\end{bmatrix},
\]
and if $a>r$ and $b>p$ then
\[
H_{a,b}=
\begin{bmatrix}
i\mathrm{E}_{b,b}&0\\
0&i\mathrm{E}_{a,a}
\end{bmatrix}.
\]
Proposition \ref{prop-computable-criterion} implies that
$\mu(H_{a,b})>0$ for every $1\leq a\leq p+q$ and every $1\leq b\leq r+s$.
However, from the assumption that $p,q,r,$ and $s$ are all positive,
it follows that
the zero matrix 
lies in the convex hull of the $H_{a,b}$'s, which is a contradiction. 
Therefore $\mathcal G$ cannot be \STAR -reduced.

(iii) Note that 
$\germ{su}^*(2p|2q)_\eev\simeq\germ{su}^*(2p)\oplus\germ{su}^*(2q)$.
The maximal compact subalgebra of $\germ{su}^*(2n)$
is $\germ{sp}(n)$, which has rank $n$. The rank of the complexification
of $\germ{su}^*(2n)$, which is $\germ{sl}(2n,\mathbb C)$, is $2n-1$. 
If $n>1$, then $2n-1>n$ implies that 
$\germ{su}^*(2n)$ does not have a compactly embedded Cartan subalgebra.
Now use Proposition \ref{propertiesofstarreduced}(i) and 
Lemma \ref{lemma-compact-cartan}.

(iv) This Lie superalgebra is a quotient of $\overline{\germ{q}}(m)$ by its center, 
where $\overline{\germ{q}}(m)$ is defined in the standard
realization of $\germ{sl}(m|m,\mathbb C)$ by
\[
\overline{\germ q}(m)=\left\{
\begin{bmatrix}
A&B\\
C&D
\end{bmatrix}  \in \fsl(m|m,\C)
\ \bigg|\ 
\begin{bmatrix}
\overline{D}&\overline{C}\\
\overline{B}&\overline{A}
\end{bmatrix}
=
\begin{bmatrix}
A&B\\
C&D
\end{bmatrix}
\right\}.
\]
One can now use Proposition \ref{propertiesofstarreduced}(iv) 
because $\oline\fq(m)_\eev \cong \fsl(m,\C) \oplus \R$ contains no 
compactly embedded Cartan subalgebra.

(v) This Lie superalgebra is a quotient of $\germ{up}(m)$ by its center, 
where $\germ{up}(m)$ is defined in the standard
realization of $\germ{sl}(m|m,\mathbb C)$ by
\[
\germ{up}(m)=
\left\{
\begin{bmatrix}
A&B\\
C&D
\end{bmatrix} \in \fsl(m|m,\C) 
\ \bigg|\ 
\begin{bmatrix}
-D^*&B^*\\
-C^*&-A^*
\end{bmatrix}
=
\begin{bmatrix}
A&B\\
C&D
\end{bmatrix}
\right\}.
\]
This implies that $\fup(m)_\eev \cong \mathrm{sl}(m,\C)\oplus\mathbb{R}$. 
Since this Lie algebra has no compactly embedded Cartan subalgebra, 
the assertion follows from 
Proposition \ref{propertiesofstarreduced}(iv).

(vi) From Section \ref{section-characterization} it follows that
$\germ{osp}^*(m|p,q)_\eev\simeq\germ{so}^*(m)\oplus\germ{sp}(p,q)$
has pointed generating invariant cones if and only if $p=0$ or $q=0$.
One can now use Proposition \ref{propertiesofstarreduced}(i).

(vii) The argument for this case is quite 
similar to the one given for $\germ{su}(p,q|r,s)$, i.e.,
the idea is to find root vectors $X_\alpha\in\germ g_\ood^{\mathbb C,\alpha}$ such that 
the convex hull of the $[X_\alpha,\tau(X_\alpha)]$'s contains the origin.
The details are left to the reader, but
it may be helpful to illustrate how one can find the root vectors.
The complex simple Lie superalgebra $\germ{osp}(m|2n,\mathbb C)$ can be realized inside 
$\germ{sl}(m|2n,\mathbb C)$ as 
\[
\germ{osp}(m|2n,\mathbb C)=
\left\{
\begin{bmatrix}
A&B\\
C&D
\end{bmatrix}
\ \bigg|\ 
\begin{bmatrix}
-A^t\ &-C^t\JJ_n\\
-\JJ_nB^t\ &\JJ_nD^t\JJ_n
\end{bmatrix}
=
\begin{bmatrix}
A&B\\
C&D
\end{bmatrix}
\right\}.
\]
If $p$ and $q$ are nonnegative integers
satisfying $p+q=m$ then 
$\germ{osp}(p,q|2n)$ is the set of fixed points of the map
\[
\tau:\germ{osp}(m|2n,\mathbb C)\to\germ{osp}(m|2n,\mathbb C)
\]
defined by
\[
\tau\left(
\begin{bmatrix}
A&B\\
C&D
\end{bmatrix}
\right)
=
\begin{bmatrix}
\II_{p,q}\overline{A}\II_{p,q}\ &\II_{p,q}\overline{B}\\[3pt]
\overline{C}\II_{p,q}\ &\overline{D}
\end{bmatrix}.
\]
Moreover, $\germ{osp}(p,q|2n)_\eev\simeq\germ{so}(p,q)\oplus\germ{sp}(2n,\mathbb R)$
consists of block diagonal matrices, i.e., matrices for which $B$ and $C$ are zero.

Assume that $\germ{osp}(p,q|2n)$ is \STAR -reduced.
Then the span of 
\[
\Big\{\mathrm{E}_{j,p+1-j}-\mathrm{E}_{p+1-j,j}\ 
|\ 1\leq j\leq \lfloor\frac{p}{2}\rfloor\Big\}
\]
and 
\[
\Big\{
\mathrm{E}_{p+j,p+q+1-j}-\mathrm{E}_{p+q+1-j,p+j}\ 
|\ 1\leq j\leq \lfloor\frac{q}{2}\rfloor
\Big\}
\]
is a compactly embedded Cartan subalgebra of  
$\germ{so}(p,q)$, and the span of 
\[
\{\mathrm{E}_{p+q+j,p+q+n+j}-\mathrm{E}_{p+q+n+j,p+q+j}\ |\ 1\leq j\leq n\}
\]
is a compactly embedded Cartan subalgebra of $\germ{sp}(2n,\mathbb R)$.

Fix $1\leq b\leq n$.
For every
$a\leq p$ we can obtain two root vectors as follows. If we
set
\[
B_{a,b}=\mathrm{E}_{a,b}+i\mathrm{E}_{a,b+n}+i\mathrm{E}_{p+1-a,b}-\mathrm{E}_{p+1-a,b+n}
\]
and 
\[
C_{a,b}=-i\mathrm{E}_{b,a}+\mathrm{E}_{b,p+1-a}+\mathrm{E}_{b+n,a}+i\mathrm{E}_{b+n,p+1-a},
\]
then the matrix
\[
X_{a,b}=
\begin{bmatrix}
0&B_{a,b}\\
C_{a,b}&0
\end{bmatrix}
\]
is a root vector, and
$H_{a,b}=[X_{a,b},\tau(X_{a,b})]$ is given by
\[
H_{a,b}=
\begin{bmatrix}
A_{a,b}&0\\
0&D_{a,b}
\end{bmatrix}
\]
where $A_{a,b}=2\mathrm{E}_{a,p+1-a}-2\mathrm{E}_{p+1-a,a}$
and $D_{a,b}=-2\mathrm{E}_{b,b+n}+2\mathrm{E}_{b+n,b}$.
Similarly, setting
setting 
\[
B_{a,b}=\mathrm{E}_{a,b}-i\mathrm{E}_{a,b+n}+i\mathrm{E}_{p+1-a,b}+\mathrm{E}_{p+1-a,b+n}
\]
and 
\[
C_{a,b}=i\mathrm{E}_{b,a}-\mathrm{E}_{b,p+1-a}+\mathrm{E}_{b+n,a}+i\mathrm{E}_{b+n,p+1-a}
\]
yields another root vector $X_{a,b}$, and 
in this case for the corresponding $H_{a,b}$ we have
\[
A_{a,b}=-2\mathrm{E}_{a,p+1-a}+2\mathrm{E}_{p+1-a,a}
\]
and 
\[
D_{a,b}=-2\mathrm{E}_{b,b+n}+2\mathrm{E}_{b+n,b}.
\]

Moreover, 
when $p$ is odd, 
setting 
\[
B_{\lceil \frac{p+1}{2}\rceil,b}
=
\mathrm{E}_{\lceil\frac{p+1}{2}\rceil,b}
+
i\mathrm{E}_{\lceil\frac{p+1}{2}\rceil,b+n}
\]
and 
\[
C_{\lceil \frac{p+1}{2}\rceil,b}
=
-i\mathrm{E}_{b,\lceil\frac{p+1}{2}\rceil}+\mathrm{E}_{b+n,\lceil\frac{p+1}{n}\rceil}
\]
yields a root vector $X_{\lceil \frac{p+1}{2}\rceil,b}$,
and 
$H_{\lceil\frac{p+1}{2}\rceil,b}$ is given by
\[
A_{\lceil\frac{p+1}{2}\rceil,b}=0\]
and 
\[
D_{\lceil\frac{p+1}{2}\rceil,b}=
-2\mathrm{E}_{b,b+n}+2\mathrm{E}_{b+n,b}.
\]
The case $p<a\leq p+q$  is similar.

(viii) Follows from Proposition \ref{prop-symmetryofrootsystem}, as the root system of
$\mathbf P(n)$ is not 
symmetric.

(ix) For $\germ{psq}(n,\mathbb R)$ and $\germ{psq}^*(n)$, use  
Proposition \ref{propertiesofstarreduced}(iv)
and the fact that 
\[
\germ{psq}(n,\mathbb R)_\eev\simeq\germ{sl}(n,\mathbb R)
\quad\textrm{and}\quad
\germ{psq}^*(n)_\eev\simeq\germ{su}^*(n)
.
\]

For $\germ{psq}(p,q)$ and $p,q > 0$, 
we observe that it is a quotient of the subsuperalgebra 
$\tilde{\g}$ of $\germ{sl}(p+q|p+q,\mathbb C)$ given by 
\[ \tilde{\g} = \fsq(p,q) = 
\left\{
\begin{bmatrix}
A&B\\
B&A
\end{bmatrix} 
\ \bigg|\ 
\begin{bmatrix}
-\II_{p,q}A^*\II_{p,q}\ &i\II_{p,q}B^*\II_{p,q}\\
i\II_{p,q}B^*\II_{p,q}\ &-\II_{p,q}A^*\II_{p,q}
\end{bmatrix}
=
\begin{bmatrix}
A&B\\
B&A
\end{bmatrix}
\right\}.
\]
Let $\zeta \in \C$ be a squareroot of $i$.
Then the maps
\[
\fu(p,q) \to \tilde{\g_\eev}\quad,\quad
 A \mapsto \pmat{A & 0 \\ 0 & A}
\]
and 
\[
\fu(p,q) \to \tilde{\g_\ood}\quad,\quad
B \mapsto \pmat{0 & \zeta^{-1} B\\ \zeta^{-1} B & 0}
 \] 
are linear isomorphisms. Note that 
$\fk_\eev = \fu(p) \oplus \fu(q)$ is a maximal compactly embedded 
subalgebra of $\tilde{\g}_\eev$. Its center is 
\[ \mathscr Z(\fk_\eev) = \R i \mathrm{I}_p \oplus \R i \mathrm{I}_q \] 
and $\tilde{\g}_\eev$ is quasihermitian. The projection 
$p_\z \: \fu(p,q) \to \mathscr Z(\fk_\eev)$ is simply given by 
\[ p_\z\pmat{a & b \\ b^* & d} = 
\pmat{\frac{1}{p} \tr(a)\mathrm{I}_p & 0 \\ 0 & \frac{1}{q} \tr(d)\mathrm{I}_q}. \] 

Let $C \subeq \tilde{\g}_\eev$ be the closed convex cone generated by 
$[X,X]$, $X \in \tilde{\g}_\ood$. Since $\tilde{\g}_\eev$ is quasihermitian, 
Lemma~\ref{lem:qherm} implies that 
$p_\z(C) = C \cap \mathscr Z(\fk_\eev)$. 

Next we observe that 
\[ \pmat{0 & \zeta^{-1} B\\ \zeta^{-1} B & 0}^2 
= \pmat{-i B^2 & 0 \\ 0 & -i B^2} \quad \mbox{ for every} \quad B \in \fu(p,q).\] 
For 
$B = \pmat{a & b\\ b^* & d}$ we have 
\[ B^2 
= \pmat{a & b\\ b^* & d}^2 
= \pmat{a^2 + bb^* & ab + bd \\ b^*a + a b^* & b^*b + d^2}, \] 
so that 
\[ p_\z(-iB^2) 
= -i \pmat{\frac{1}{p}\tr(a^2 + bb^*) & 0 \\ 0 & 
\frac{1}{q}\tr(b^*b + d^2)}.\] 
Applying this to positive multiples of matrices where 
only the $a$, $b$ or $d$-component is non-zero, we see that the closed 
convex cone $p_\z(C)$ contains the elements 
\[ Z_1 = \pmat{i\mathrm I_p & 0 \\ 0 & 0}, \quad 
Z_2 = \pmat{0 & 0 \\ 0 & i\mathrm I_q} \quad \mbox{ and } \quad 
 Z_3 = - \pmat{\frac{1}{p}i\mathrm{I}_p & 0 \\ 0 & \frac{1}{q}i\mathrm I_q}.\] 
This implies that $p_\z(C) = \mathscr Z(\fk_\eev) \subeq C$. 

We conclude that $\mathscr Z(\tilde{\g}_\eev) = i \R\mathrm I_{p+q}\subeq C$, so that 
$C = \mathscr (\tilde{\g}_\eev) \oplus C_1$, where 
$C_1 = C \cap [\tilde{\g}_\eev,\tilde{\g}_\eev]$ 
is a non-pointed non-zero invariant closed convex 
cone in a simple Lie algebra isomorphic to $\fsu(p,q)$. This leads to 
$C_1 =[\tilde{\g}_\eev,\tilde{\g}_\eev]$. We conclude that 
$C = \tilde{\g}_\eev$ and the same holds also for the quotient 
$\fpsq(p,q)$.

(x) Follows from Proposition \ref{prop-symmetryofrootsystem}, as 
the root systems of
these complex simple Lie superalgebras are not 
symmetric (see \cite[App. A]{penkovstrongly}).

(xi) Suppose, on the contrary, that 
$\mathcal G$ is \STAR -reduced.
Proposition \ref{propertiesofstarreduced}(i)
and
Lemma \ref{lemma-abelianideal} imply that every abelian ideal of 
$\germ g =  \mathbf H(p,q)$ lies in its center.
The standard $\mathbb Z$-grading of $\mathbf H(p+q)$ (see \cite[Prop. 3.3.6]{kac})
yields a grading of $\mathbf H(p+q)_\eev$, i.e.,
\[
\mathbf H(p+q)_\eev=\mathbf H(p+q)_\eev^{(0)}\oplus
\mathbf H(p+q)_\eev^{(2)}\oplus
\cdots
\oplus\mathbf H(p+q)_\eev^{(k)}
\]
where $k=p+q-3$ if $p+q$ is odd and $k=p+q-4$ otherwise. This grading is consistent with the
real form $\mathbf H(p,q)_\eev$. Since 
$\mathbf H(p,q)_\eev^{(k)}$ is an abelian ideal of 
$\mathbf H(p,q)_\eev$, it should lie in the center
of $\mathbf H(p,q)_\eev$. It follows that $\mathbf H(p+q)_\eev^{(k)}$ lies in the center
of $\mathbf H(p+q)_\eev$. However, this is impossible because
it is known (see \cite[Prop. 3.3.6]{kac}) that 
$\mathbf H(p+q)_\eev^{(0)}\simeq\germ{so}(p+q,\mathbb C)$ and
the 
representation of $\mathbf H(p+q)_\eev^{(0)}$ on $\mathbf H(p+q)_\eev^{(k)}$ is isomorphic to
$\wedge^{k+2}\mathbb C^{p+q}$, from which it follows that 
\[
[\mathbf H(p+q)_\eev^{(0)},\mathbf H(p+q)_\eev^{(k)}]\neq\{0\}.
\qedhere \]
\end{proof}

\begin{remark}
In classical cases, Theorem \ref{prop-simplecase} can be viewed as a converse
to the classification of 
highest weight modules obtained in \cite{jakobsen}.  
From Theorem \ref{prop-simplecase} it also follows that 
for the nonclassical cases, unitary representations are rare.
\end{remark}
\begin{remark}
The results of \cite{jakobsen} imply that 
real forms of $\mathbf A(m|m)$ 
do not have any unitarizable highest weight modules.
However, $\mathbf A(m|m)$ is a 
quotient of $\germ{sl}(m|m,\mathbb C)$, and there exist unitarizable modules of 
$\germ{su}(p,m-p|m,0)$ which do not factor to the simple quotient. For instance, the standard representation is
a finite dimensional unitarizable module of $\germ{su}(m,0|m,0)$ with this property.
\end{remark}

\subsection{Application to real semisimple Lie superalgebras}

Although real semisimple Lie superalgebras may have a complicated structure, 
those 
which have faithful unitary representations are relatively easy to describe.

Given a finite dimensional real Lie superalgebra $\germ g$, 
let us call it 
\STAR -reduced if there exists a \STAR -reduced 
Lie supergroup $\mathcal G=(G,\germ g)$.

\begin{theorem}
Let $\mathcal G=(G,\germ g)$ be a \STAR -reduced Lie supergroup.
If $\germ g$ is a real semisimple Lie superalgebra then there exist
\STAR -reduced real simple Lie superalgebras $\germ s_1,\ldots,\germ s_k$ such that
\[
\germ s_1\oplus\cdots\oplus\germ s_k\subseteq \germ g\subseteq
\mathrm{Der}_{\mathbb R}(\germ s_1)\oplus\cdots\oplus\mathrm{Der}_{\mathbb R}(\germ s_k).
\]
\end{theorem}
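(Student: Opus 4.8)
The plan is to feed the structure theorem for semisimple Lie superalgebras (Theorem~\ref{classification-of-semisimple}) into the vanishing criterion of Proposition~\ref{reducedlemma}. Theorem~\ref{classification-of-semisimple} produces real simple Lie superalgebras $\germ s_1,\dots,\germ s_k$ and integers $n_1,\dots,n_k\ge 0$ with
\[
\bigoplus_{i=1}^k\big(\germ s_i\otimes_{\mathbb K_i}\mathbf\Lambda(n_i,\mathbb K_i)\big)
\subseteq\g\subseteq
\bigoplus_{i=1}^k\big(\mathrm{Der}_{\mathbb K_i}(\germ s_i)\otimes_{\mathbb K_i}\mathbf\Lambda(n_i,\mathbb K_i)+\mathbb L_i\otimes_{\mathbb K_i}\mathbf W(n_i,\mathbb K_i)\big),
\]
where $\mathbb K_i=\mathscr R(\germ s_i)$ and $\mathbb L_i=\mathscr C(\germ s_i)$. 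Everything then reduces to two claims: that $\star$-reducedness forces $n_i=0$ for all $i$, and that each $\germ s_i$ is itself $\star$-reduced.

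To kill the Grassmann factors, suppose $n_i>0$ for some $i$. Since $\germ s_i$ is simple it is not abelian, so $(\germ s_i)_\eev\neq\{0\}$; pick a nonzero even $s\in(\germ s_i)_\eev$ and the odd generator $\xi_1$ of $\mathbf\Lambda(n_i,\mathbb K_i)$. Then $X=s\otimes\xi_1$ is a nonzero odd element of $\g$, and the bracket on $\germ s_i\otimes_{\mathbb K_i}\mathbf\Lambda(n_i,\mathbb K_i)$ gives
\[
[X,X]=[s,s]\otimes\xi_1^2=0,
\]
because $[s,s]=0$ for even $s$ (and $\xi_1^2=0$ as well). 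Proposition~\ref{reducedlemma} with $m=1$ then forces $\rho^\pi(X)=0$ for every unitary representation $(\pi,\rho^\pi,\mathscr H)$ of $\mathcal G$, contradicting $\star$-reducedness. Hence $n_i=0$ for every $i$. With all $n_i=0$ we have $\mathbf\Lambda(0,\mathbb K_i)=\mathbb K_i$ and $\mathbf W(0,\mathbb K_i)=\{0\}$, so the sandwich collapses to $\bigoplus_i\germ s_i\subseteq\g\subseteq\bigoplus_i\mathrm{Der}_{\mathbb K_i}(\germ s_i)$. Since $\mathbb R\subseteq\mathbb K_i$, any $\mathbb K_i$-linear derivation is in particular $\mathbb R$-linear, i.e.\ $\mathrm{Der}_{\mathbb K_i}(\germ s_i)\subseteq\mathrm{Der}_{\mathbb R}(\germ s_i)$, which yields the desired inclusion $\bigoplus_i\germ s_i\subseteq\g\subseteq\bigoplus_i\mathrm{Der}_{\mathbb R}(\germ s_i)$.

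It remains to check that each $\germ s_i$ is $\star$-reduced. Viewing $\germ s_i$ inside $\bigoplus_j\mathrm{Der}_{\mathbb R}(\germ s_j)$ as inner derivations (its center is trivial, so this embedding is faithful) shows that $\germ s_i$ is an ideal of $\g$; hence $(\germ s_i)_\eev$ is an ideal of $\g_\eev=\mathrm{Lie}(G)$. Let $G_i\le G$ be the analytic subgroup with Lie algebra $(\germ s_i)_\eev$. Because $G_i$ is connected and $\ad((\germ s_i)_\eev)$ preserves $\germ s_i$, the group $\mathrm{Ad}(G_i)$ preserves $\germ s_i$, so $\mathcal G_i=(G_i,\germ s_i)$ is a Harish--Chandra pair, i.e.\ a Lie subsupergroup of $\mathcal G$. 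Given a nonzero $X\in\germ s_i$, $\star$-reducedness of $\mathcal G$ yields a unitary representation $(\pi,\rho^\pi,\mathscr H)$ with $\rho^\pi(X)\neq 0$; restricting it to $\mathcal G_i$ via $\RES^{\mathcal G}_{\mathcal G_i}$ produces a unitary representation of $\mathcal G_i$ whose value at $X$ agrees with $\rho^\pi(X)$ on $\mathscr H^\infty$ and is therefore nonzero. Thus $\mathcal G_i$, and with it $\germ s_i$, is $\star$-reduced.

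I expect the last step to be the main obstacle: one must justify that $\RES^{\mathcal G}_{\mathcal G_i}$ is well defined in this setting (the $\mathcal G_i$-action on $\mathscr H^\infty$ determines the representation, cf.\ \cite[Prop.~2]{rajaetal}) and track the nesting of the smooth-vector spaces of $G$ and $G_i$ so that $\rho^\pi(X)\neq 0$ genuinely survives the restriction. The Grassmann-vanishing argument and the algebraic collapse are routine once the bracket identity on $\germ s_i\otimes_{\mathbb K_i}\mathbf\Lambda(n_i,\mathbb K_i)$ is written out explicitly.
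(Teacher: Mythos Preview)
Your proof is correct and follows essentially the same route as the paper: invoke Theorem~\ref{classification-of-semisimple}, use a self-commuting odd element $s\otimes\xi_1$ together with Proposition~\ref{reducedlemma} to force all $n_i=0$, and then read off the sandwich. You supply more detail than the paper in two places---the explicit bracket computation on $\germ s_i\otimes_{\mathbb K_i}\mathbf\Lambda(n_i,\mathbb K_i)$ and the construction of the subsupergroup $\mathcal G_i$ witnessing that $\germ s_i$ is $\star$-reduced---where the paper simply asserts the conclusions; your concern about $\RES^{\mathcal G}_{\mathcal G_i}$ is legitimate but is handled exactly as you indicate, since the restricted action on $G_i$-smooth vectors extends the original action on $\mathscr H^\infty$.
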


\begin{proof}
We use the description of $\germ g$ given in 
Theorem~\ref{classification-of-semisimple}. First note that for every $i$ we have $n_i=0$. 
To see this, suppose on the contrary
that $n_i>0$ for some $i$, and let $\xi_1,\ldots,\xi_{n_i}$ be the standard generators of 
$\mathbf \Lambda_{\mathbb K_i}(n_i)$.
For every nonzero $X\in({\germ s_i})_\eev$ have $X\otimes \xi_1\in({\germ s_i})_\ood$ and 
\[
[X\otimes\xi_1,X\otimes \xi_1]=0.
\] 
Proposition \ref{reducedlemma} implies that
$X\otimes \xi_1$ lies in the kernel of every unitary representation of $\mathcal G$, which is a contradiction.

From the fact that all of the $n_i$, $1\leq i\leq k$,  are zero it follows that
\[
\germ s_1\oplus\cdots\oplus\germ s_k
\subseteq\germ g\subseteq
\mathrm{Der}_{\mathbb K_1}(\germ s_1)\oplus\cdots\oplus\mathrm{Der}_{\mathbb K_k}(\germ s_k)
\]
and from $\germ s_i\subseteq\germ g$ it follows that every $\germ s_i$ is 
\STAR -reduced.
\qedhere\end{proof}

\subsection{Application to nilpotent Lie supergroups}

Another interesting by-product of the results of Section 
\ref{section-properties} 
is the following statement about unitary 
representations of
nilpotent Lie supergroups.
(A Lie supergroup $\mathcal G=(G,\germ g)$ is called \emph{nilpotent} if $\germ g$ is nilpotent.)

\begin{theorem}
If $(\pi,\rho^\pi,\mathscr H)$ is a unitary representation of a nilpotent Lie supergroup $(G,\germ g)$
then $\rho^\pi([\germ g_\ood,[\germ g_\ood,\germ g_\ood]])=\{0\}$.
\end{theorem}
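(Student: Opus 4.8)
The plan is to reduce the theorem to a single commutation identity for two operators and then to exploit the positivity built into Definition~\ref{defofunirep}. First I would record a polarization: since $[Z,Z']=[Z',Z]$ for odd $Z,Z'$, the subspace $[\g_\ood,\g_\ood]$ is spanned by the elements $[X,X]$ with $X\in\g_\ood$. Hence $[\g_\ood,[\g_\ood,\g_\ood]]$ is spanned by the odd elements $[W,[X,X]]=-\ad([X,X])W$ with $W,X\in\g_\ood$, so it suffices to prove $\rho^\pi([W,[X,X]])=0$ for all $W,X\in\g_\ood$.

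Fix such $W,X$ and set $Y=[X,X]\in\g_\eev$, $A=\rho^\pi(W)$, and $H=-i\dd\pi(Y)$. From $\rho^\pi([X,X])=2i\rho^\pi(X)^2$ and the symmetry of $\rho^\pi(X)$ (Definition~\ref{defofunirep}(v)) we get $H=2\rho^\pi(X)^2$, a nonnegative, essentially self-adjoint operator on $\mathscr H^\infty$; this nonnegativity is exactly the dissipativity~\eqref{eq:dissi}. Applying property (vi) of Definition~\ref{defofunirep} to the one-parameter group $\exp tY\subseteq G$ gives, on $\mathscr H^\infty$, the identity $\rho^\pi(\mathrm{Ad}(\exp tY)W)=\pi(\exp tY)\,A\,\pi(\exp tY)^{-1}=e^{itH}Ae^{-itH}$. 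Since $\g$ is nilpotent, $\ad(Y)$ is nilpotent, so the left-hand side is the finite sum $\sum_{j\ge 0}\tfrac{t^j}{j!}\rho^\pi(\ad(Y)^jW)$; matching this polynomial identity in $t$ with the expansion $e^{itH}Ae^{-itH}=\sum_j\tfrac{(it)^j}{j!}\ad_H^{\,j}(A)$ yields $\rho^\pi(\ad(Y)^jW)=\ad_{iH}^{\,j}(A)$, and nilpotency gives $\ad_H^{\,m}(A)=0$ for some $m$. The linear term reads $\rho^\pi([Y,W])=i[H,A]$, so the theorem is now equivalent to the operator statement: \emph{if $H\ge 0$ is essentially self-adjoint, $A$ is symmetric, both preserve the common core $\mathscr H^\infty$, and $\ad_H^{\,m}(A)=0$, then $[H,A]=0$.} I would stress that nonnegativity of $H$ is indispensable: on $L^2(\R)$ the pair $H=$ multiplication by $x$ and $A=i\,\tfrac{d}{dx}$ satisfies $\ad_H^2(A)=0$ while $[H,A]=-iI\neq0$, the obstruction being that $\mathrm{spec}(H)=\R$ rather than $[0,\infty)$.

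To prove the operator statement I would use the contraction semigroup $e^{-sH}$, $s\ge 0$, supplied by $H\ge 0$ --- equivalently the holomorphic extension of the dissipative representation to the Olshanski semigroup $G\exp(i\CONE(\mathcal G))$ in the sense of \cite{Ne00}. For $v,w\in\mathscr H^\infty$ the matrix coefficient $G(t)=\langle Ae^{itH}v,w\rangle=\langle e^{itH}v,Aw\rangle$ is bounded on $\R$ (by $\|v\|\,\|Aw\|$, using symmetry of $A$), and because $\mathrm{spec}(H)\subseteq[0,\infty)$ it is the boundary value of a bounded holomorphic function on the upper half plane. On the other hand, the nilpotent expansion gives $G(\zeta)=\sum_{j=0}^{m-1}\tfrac{(-i\zeta)^j}{j!}\,g_j(\zeta)$ with $g_j(\zeta)=\langle e^{i\zeta H}A_jv,w\rangle$, where $A_j=\ad_H^{\,j}(A)$, each $g_j$ bounded and holomorphic on the upper half plane. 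Letting $\zeta=i\sigma\to i\infty$ collapses $e^{i\zeta H}$ to the projection $E_0$ onto $\ker H=\ker\rho^\pi(X)$, and boundedness of $G$ forces the top coefficients to vanish, giving $\langle E_0A_jv,w\rangle=0$ for $j\ge 1$.

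The remaining step --- and the main obstacle --- is to upgrade this vanishing of the $\ker H$-component to the full identity $A_1=[H,A]=0$. The mechanism is visible on spectral pairs: for approximate eigenvectors $u,u'$ of $H$ at $\lambda,\lambda'$ one has $\langle e^{itH}Ae^{-itH}u,u'\rangle=e^{-it(\lambda-\lambda')}\langle Au,u'\rangle$, and an exponential can equal the polynomial $\sum_j\tfrac{(it)^j}{j!}\langle A_ju,u'\rangle$ only if both are constant, which forces $\langle A_ju,u'\rangle=0$ for $j\ge 1$. To make this rigorous in the presence of continuous spectrum I would localize in $\mathrm{spec}(H)$ using the resolvent-type operators $(\eps^2+(H-\lambda_0)^2)^{-1}$, which are averages of the group operators $\pi(\exp tY)$ against an $L^1$-kernel and hence preserve $\mathscr H^\infty$ (so the boundedness argument applies to the localized vectors), and then let $\eps\to 0$ and integrate over $\lambda_0$ to recover $A_1=0$ on a dense family, and thus everywhere. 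This spectral-localization step, in which the half-line support of the spectral measure of $H$ is precisely what excludes the $L^2(\R)$ counterexample, is the crux of the proof.
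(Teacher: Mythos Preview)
Your reduction to the operator-theoretic claim
\[
H\ge 0,\ A\ \text{symmetric on a common core},\ \ad_H^{\,m}(A)=0\ \Longrightarrow\ [H,A]=0
\]
is clean, but this claim is false, and so the argument collapses at exactly the point you yourself flag as the ``main obstacle.'' Take $\mathscr H=L^2([0,\infty))$ with core $\mathscr D=C_c^\infty((0,\infty))$, let $H$ be multiplication by $x$ and $A=i\,d/dx$. Then $H\ge 0$, $H$ is essentially self-adjoint, $A$ is symmetric, both preserve $\mathscr D$, one has $[H,A]=-iI$ and hence $\ad_H^{\,2}(A)=0$, yet $[H,A]\ne 0$. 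Here $\mathrm{spec}(H)=[0,\infty)$ and $\ker H=\{0\}$, so your intermediate conclusion $\langle E_0A_jv,w\rangle=0$ is vacuous and your assertion that ``the half-line support of the spectral measure of $H$ is precisely what excludes the $L^2(\R)$ counterexample'' is simply wrong. The spectral-localization heuristic with approximate eigenvectors cannot be made rigorous because the statement it is meant to prove is not true.

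What is missing is that you have thrown away too much structure in the reduction: you use only the single relation between $H=-i\dd\pi([X,X])$ and $A=\rho^\pi(W)$, whereas the theorem needs the \emph{full} nilpotent Lie superalgebra acting unitarily. Indeed, trying to realize the counterexample above inside a $\star$-reduced nilpotent supergroup forces relations like $[[X,W],X]=-2X$, which destroy nilpotency. The paper exploits this global structure directly: after passing to the $\star$-reduced quotient and replacing $\g_\eev$ by $[\g_\ood,\g_\ood]$, Proposition~\ref{propertiesofstarreduced}(iv) produces a Cartan subalgebra $\mathfrak h_\eev\subseteq\g_\eev$ compactly embedded in $\g$; since $\g_\eev$ is nilpotent, $\mathfrak h_\eev=\g_\eev$, so by Proposition~\ref{propsupercompactcartan} every $\ad(Z)$ with $Z\in\g_\eev$ is semisimple on $\g$, while nilpotency of $\g$ makes it nilpotent as well, hence zero. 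Thus $[\g_\eev,\g]=0$ and in particular $[\g_\ood,[\g_\ood,\g_\ood]]=0$, which is a statement about $\g$ itself rather than about any pair of operators.
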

\begin{proof}
By passing to a quotient one can see that it suffices to show that if $(G,\germ g)$ is nilpotent and \STAR -reduced then
$[\germ g_\ood,[\germ g_\ood,\germ g_\ood]]=\{0\}$.
Without loss of generality one can assume that $\germ g_\eev=[\germ g_\ood,\germ g_\ood]$.
By 
Proposition \ref{propertiesofstarreduced}(iv) there exists a 
Cartan subalgebra $\germ h_\eev$ of $\germ g_\eev$ which is compactly
embedded in $\germ g$. As $\g_\eev$ is nilpotent, we have 
$\g_\eev = \h_\eev$. 
Proposition \ref{propsupercompactcartan}  
implies that $\germ g_\eev$ acts semisimply on $\germ g$.
Nevertheless, since $\germ g$ is nilpotent, for every
 $X\in\germ g_\eev$ the linear map 
\[
\mathrm{ad}(X):\germ g\to\germ g
\] 
is nilpotent. It follows that
$[\germ g_\eev,\germ g]=\{0\}$. 
In particular, 
$[\germ g_\ood,[\germ g_\ood,\germ g_\ood]]=\{0\}$.
\end{proof}

\section{Highest weight theory}

For Lie supergroups whose Lie algebra $\g$ is generated by 
its odd part, we analyse in this section the structure 
of the irreducible unitary representations. 
The main result is Theorem~\ref{thm:hiweistr} which asserts 
that this structure is quite similar to the structure of 
highest weight modules. Here it is generated by an irreducible 
representation of a Clifford Lie superalgebra and not simply by a 
an eigenvector. 


\subsection{A Fr\'echet space of analytic vectors} 

Let $G$ be a connected Lie group with Lie algebra 
$\germ g$. Let $\germ t \subseteq \germ g$ be a compactly embedded Cartan subalgebra, 
and $T =\exp(\germ t)$ be the corresponding subgroup of $G$. 
Then $\germ g^\mathbb C$ carries a norm $\|\cdot\|$ which is invariant under 
$\mathrm{Ad}(T)$. 
In particular, for each $r > 0$, the open ball 
$B_r = \{ X \in \germ g^\mathbb C \: \|X\| < r \}$ is an open subset 
which is invariant under $\mathrm{Ad}(T)$. 

Let $(\pi, \mathscr H)$ be a unitary representation of $G$. 
A smooth vector $v \in \mathscr H^\infty$ is {\it analytic} if and only if 
there exists an $r > 0$ such that the power series 
\begin{equation}\label{eq:posi} 
f_v : B_r \to \mathscr H, \quad 
f_v(X) = \sum_{n = 0}^\infty \frac{1}{n!} \mathsf{d}\pi(X)^nv  
\end{equation}
defines a holomorphic function on $B_r$. 
In fact, if the series \eqref{eq:posi} converges on some $B_r$, 
then it defines a 
holomorphic function, and  
the theory of analytic vectors for unitary one-parameter groups 
implies that $f_v(X) = \pi(\exp(X))v$
for every $X \in B_r \cap \germ g$. Therefore the orbit 
map of $v$ is analytic. 

If the series \eqref{eq:posi} converges on $B_r$, it converges 
uniformly on $B_s$ for every $s < r$ (\cite[Prop.~4.1]{BS71}). 
This means that 
the seminorms 
\[ 
q_n(v) = \sup\{ \|\mathsf{d}\pi(X)^nv\| \ \Big| \ \|X\| \leq 1, X \in \germ g\}
\] 
satisfy 
\[
\sum_{n = 0}^\infty \frac{s^n}{n!} q_n(v) < \infty\textrm{ for every }s<r.
\]
Note that the seminorms $q_n$ define the topology of $\mathscr H^\infty$ 
(cf.\ \cite[Prop.~4.6]{Ne10}). 

For every $r>0$, let $\mathscr H^{\omega,r}$ denote the set of all analytic vectors 
for which \eqref{eq:posi} converges on $B_r$, so that 
\[ \mathscr H^\omega = \bigcup_{r > 0} \mathscr H^{\omega,r}.\] 
If $v \in \mathscr H^{\omega,r}$ and $s < r$, set 
\[ p_s(v) = \sum_{n = 0}^\infty \frac{s^n}{n!} q_n(v) \] 
and note that this is a norm on  $\mathscr H^{\omega,r}$. 

\begin{lemma} \label{lem:2.1x} The norms $p_s$, $s < r$, turn 
$\mathscr H^{\omega,r}$ into a Fr\'echet space. 
\end{lemma}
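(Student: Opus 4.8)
The plan is to verify the three defining features of a Fr\'echet space --- locally convex, metrizable, complete --- for the topology generated by the norms $p_s$, $s<r$. Local convexity is automatic, since the topology is defined by a family of seminorms. For metrizability I would first note that $p_s$ is increasing in $s$ (as each $q_n\ge 0$), so for any sequence $s_k\uparrow r$ the countable family $\{p_{s_k}\}$ is cofinal and defines the same topology. The $n=0$ term gives $p_s(v)\ge q_0(v)=\|v\|$, so each $p_s$ is genuinely a norm; in particular the family separates points and the topology is Hausdorff. A countable separating family of seminorms gives a metrizable locally convex topology, so the only substantial point is completeness.

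For completeness, let $(v_j)$ be Cauchy with respect to every $p_s$. First I would produce a candidate limit and check smoothness. Since the $n$-th term of $p_s$ dominates $\frac{s^n}{n!}q_n$, we have $q_n(v_j-v_l)\le \frac{n!}{s^n}p_s(v_j-v_l)$ for any fixed $s\in(0,r)$, so $(v_j)$ is Cauchy with respect to each $q_n$. Because the $q_n$ define the topology of the Fr\'echet space $\cH^\infty$ and each $v_j\in\cH^{\omega,r}\subseteq\cH^\infty$, the sequence converges in $\cH^\infty$ to some $v\in\cH^\infty$; as $p_s\ge\|\cdot\|$ shows that $v_j$ also converges in $\cH$, the two limits coincide. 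In particular $q_n(v_j-v)\to 0$, and hence $q_n(v_j)\to q_n(v)$, for every $n$.

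The heart of the argument is to show that $v\in\cH^{\omega,r}$ and that $v_j\to v$ in each $p_s$, i.e.\ that passing to the limit does not shrink the radius of analyticity. Since a Cauchy sequence is bounded, $M_s:=\sup_j p_s(v_j)<\infty$, and Fatou's lemma for series gives $p_s(v)=\sum_n \frac{s^n}{n!}\lim_j q_n(v_j)\le \liminf_j \sum_n \frac{s^n}{n!}q_n(v_j)\le M_s<\infty$ for every $s<r$; by the characterization of $\cH^{\omega,r}$ through finiteness of all the $p_s$, this places $v$ in $\cH^{\omega,r}$. For the convergence, fix $s<r$ and choose $s'$ with $s<s'<r$. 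The geometric factor $(s/s')^n$ yields a uniform tail estimate $\sum_{n>N}\frac{s^n}{n!}q_n(u)\le (s/s')^{N+1}p_{s'}(u)$. Applying it to $u=v_j-v$, and using $p_{s'}(v_j-v)\le \sup_j p_{s'}(v_j)+p_{s'}(v)=:M''<\infty$, I would split $p_s(v_j-v)$ into a finite head $\sum_{n\le N}\frac{s^n}{n!}q_n(v_j-v)$, which tends to $0$ as $j\to\infty$ for fixed $N$, and a tail bounded by $(s/s')^{N+1}M''$, made small by taking $N$ large; hence $p_s(v_j-v)\to 0$.

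The step I expect to be the main obstacle is precisely this last one: guaranteeing that the limit is still analytic with the full radius $r$ and that convergence holds in the stronger norms, not merely in $\cH^\infty$. The two devices that make it work are Fatou's lemma, which prevents the radius of analyticity from collapsing, and the geometric decay $(s/s')^n$ coming from the strict inequality $s<s'<r$, which upgrades termwise convergence to $p_s$-convergence. Smoothness of the limit, by contrast, is not an obstacle, since it is offloaded onto the already-established completeness of $\cH^\infty$ in the topology defined by the $q_n$.
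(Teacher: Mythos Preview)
Your proof is correct, but it proceeds by a genuinely different route from the paper's. The paper exploits the holomorphic realization $v\mapsto f_v$ set up just before the lemma: a $p_s$-Cauchy sequence $(v_n)$ yields a sequence $(f_{v_n})$ of $\mathscr H$-valued holomorphic functions on $B_r$ converging uniformly on each $B_s$, hence to a holomorphic limit $f$; setting $v=f(0)$, the paper invokes \cite[Prop.~3.1]{BS71} to get $v\in\mathscr H^\infty$ with $\mathsf d\pi(X)^k v_n\to\mathsf d\pi(X)^k v$, and then identifies $f=f_v$ to conclude $v\in\mathscr H^{\omega,r}$ and $v_n\to v$ there. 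Your argument instead stays entirely at the level of the seminorms $q_n$: completeness of $\mathscr H^\infty$ furnishes the limit $v$, Fatou's lemma keeps the analyticity radius at $r$, and the geometric tail bound from $s<s'<r$ upgrades termwise convergence to $p_s$-convergence. Your approach is more elementary and self-contained (no holomorphic function theory, no appeal to \cite{BS71} beyond what is already absorbed into the completeness of $\mathscr H^\infty$); the paper's approach, on the other hand, meshes with the isomorphism $\mathrm{ev}_0:\Hol(B_r,\mathscr H)^{\mathfrak g}\to\mathscr H^{\omega,r}$ used immediately afterwards, so it is economical in context.
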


\begin{proof} Since $p_s < p_t$ for $s < t < r$, the topology 
on $\mathscr H^{\omega,r}$ is defined by the sequence of seminorms 
$(p_{s_n})_{n \in \mathbb{N}}$ for any sequence $(s_n)$ with $s_n \to r$. 
Therefore $\mathscr H^{\omega,r}$ is metrizable and we have to show that 
it is complete. 

If $(v_n)$ is a Cauchy sequence in $\mathscr H^{\omega,r}$ then 
for every $s < r$ the sequence 
$f_{v_n} : B_r \to \mathscr H$ of 
holomorphic functions converges uniformly 
on each $B_s$ to some function $f : B_r \to \mathscr H$, 
which implies that $f$ is holomorphic. 

Let $v = f(0)$. Then, for each $X \in \germ g$ and $k \in \mathbb{N}$, 
$\mathsf{d}\pi(X)^k v_n$ is a Cauchy sequence in $\mathscr H$. This implies that 
$v \in \mathscr H^\infty$ with 
$\mathsf{d}\pi(X)^k v_n \to \mathsf{d}\pi(X)^kv$ for every $X \in \germ g$ and $k \in \mathbb{N}$ 
(\cite[Prop.~3.1]{BS71}).
Therefore $f = f_v$ on $B_r$, and this means that 
$v \in \mathscr H^{\omega,r}$ with $v_n \to v$ in the topology of 
$\mathscr H^{\omega,r}$.
\end{proof}

\begin{lemma}
\label{lem:2.2} 
If $K \subseteq G$ is a subgroup leaving the norm 
$\|\cdot\|$ on $\germ g^\mathbb C$ invariant, then the norms 
$p_s$, $s < r$, on $\mathscr H^{\omega,r}$ are $K$-invariant 
and the action of $K$ on $\mathscr H^{\omega,r}$ is continuous. 
In particular, the action of $K$ on $\mathscr H^{\omega,r}$ integrates to a 
representation of the convolution algebra $L^1(K)$ on $\mathscr H^{\omega,r}$. 
\end{lemma}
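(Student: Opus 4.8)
The plan is to exploit the compatibility between the $\pi$-action on $\mathscr H$ and the $\mathrm{Ad}$-action on $\germ g$. The starting point is the standard relation $\pi(k)\mathsf{d}\pi(X)\pi(k)^{-1} = \mathsf{d}\pi(\mathrm{Ad}(k)X)$ on $\mathscr H^\infty$, valid for $k \in G$ and $X \in \germ g$, which upon iteration yields $\mathsf{d}\pi(X)^n\pi(k) = \pi(k)\mathsf{d}\pi(\mathrm{Ad}(k)^{-1}X)^n$. First I would establish the $K$-invariance of the norms. Since $\pi(k)$ is unitary and, by hypothesis, $\mathrm{Ad}(k)$ is an isometry of $(\germ g^\mathbb C, \|\cdot\|)$ preserving the real form $\germ g$, the substitution $Y = \mathrm{Ad}(k)^{-1}X$ gives
\[ q_n(\pi(k)v) = \sup_{\|X\| \leq 1,\, X \in \germ g}\big\|\mathsf{d}\pi(\mathrm{Ad}(k)^{-1}X)^n v\big\| = q_n(v) \]
for every $n$, because $\mathrm{Ad}(k)^{-1}$ permutes the closed unit ball of $\germ g$. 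Consequently $\pi(k)v \in \mathscr H^{\omega,r}$ and $p_s(\pi(k)v) = p_s(v)$ for all $s < r$, so each $\pi(k)$ acts as a $p_s$-isometry on $\mathscr H^{\omega,r}$.

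Because the operators $\pi(k)$ are $p_s$-isometries, hence equicontinuous, joint continuity of the action reduces to continuity of the orbit maps $k \mapsto \pi(k)v$, and since $p_s(\pi(k)v - \pi(k_0)v) = p_s(\pi(k_0^{-1}k)v - v)$ by invariance, it suffices to prove continuity at $k = e$, i.e. $p_s(\pi(k)v - v) \to 0$ as $k \to e$. For the tail I would use $q_n(\pi(k)v - v) \leq q_n(\pi(k)v) + q_n(v) = 2q_n(v)$, so that for every $N$ the tail $\sum_{n \geq N}\frac{s^n}{n!}q_n(\pi(k)v - v)$ is bounded, uniformly in $k$, by $2\sum_{n\geq N}\frac{s^n}{n!}q_n(v)$, which tends to $0$ as $N \to \infty$. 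For each fixed $n$ I would show $q_n(\pi(k)v - v) \to 0$ by writing
\[ \mathsf{d}\pi(X)^n(\pi(k)v - v) = \pi(k)\big(\mathsf{d}\pi(\mathrm{Ad}(k)^{-1}X)^n v - \mathsf{d}\pi(X)^n v\big) + (\pi(k)-1)\mathsf{d}\pi(X)^n v. \]
The first summand is controlled by the fact that $Y \mapsto \mathsf{d}\pi(Y)^n v$ is a continuous $\mathscr H$-valued map on the finite-dimensional space $\germ g$, hence uniformly continuous on the compact unit ball, together with $\mathrm{Ad}(k)^{-1} \to \mathrm{id}$ uniformly on that ball as $k \to e$; the second is controlled because $\{\,\mathsf{d}\pi(X)^n v : \|X\| \leq 1\,\}$ is a compact subset of $\mathscr H$, on which the strongly continuous representation $\pi$ converges uniformly to the identity. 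I expect this finite-$n$ estimate to be the main obstacle, since it must be uniform over the unit ball of $\germ g$ while handling the two distinct sources of $k$-dependence at once—the internal conjugation $\mathrm{Ad}(k)^{-1}$ inside $\mathsf{d}\pi(\cdot)^n$ and the external action $\pi(k)$; the finite dimensionality of $\germ g$ and the smoothness of $v$ are precisely what make both uniformities available. Combining the tail and finite-$n$ estimates yields $p_s(\pi(k)v - v) \to 0$.

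Finally, fixing a Haar measure $\mu_K$ on $K$, I would define $\Pi(f)v = \int_K f(k)\pi(k)v\, d\mu_K(k)$ for $f \in L^1(K)$. The integrand is a continuous $\mathscr H^{\omega,r}$-valued function by the continuity just established, and the invariance $p_s(f(k)\pi(k)v) = |f(k)|\,p_s(v)$ gives $\int_K p_s(f(k)\pi(k)v)\,d\mu_K = \|f\|_1\,p_s(v) < \infty$ for every $s < r$. Since $\mathscr H^{\omega,r}$ is complete by Lemma~\ref{lem:2.1x}, the integral exists in $\mathscr H^{\omega,r}$ and satisfies $p_s(\Pi(f)v) \leq \|f\|_1\,p_s(v)$, so each $\Pi(f)$ is continuous. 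The identity $\Pi(f * g) = \Pi(f)\Pi(g)$ then follows from the invariance of Haar measure by the usual substitution argument, exhibiting $\Pi$ as a representation of the convolution algebra $L^1(K)$.
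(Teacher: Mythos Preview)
Your proof is correct and follows essentially the same route as the paper: establish $K$-invariance of the seminorms $q_n$ and $p_s$, reduce joint continuity to orbit-map continuity at the identity, combine the uniform tail bound $q_n(\pi(k)v-v)\leq 2q_n(v)$ with pointwise convergence $q_n(\pi(k)v-v)\to 0$, and then integrate. Two minor differences are worth noting: for the pointwise step the paper simply invokes the continuity of the $G$-action on $\mathscr H^\infty$ (since the $q_n$ are precisely the Fr\'echet seminorms on $\mathscr H^\infty$) and applies the Dominated Convergence Theorem, whereas you unpack that step by hand; and for the $L^1(K)$-action the paper passes to the Banach completions $\mathscr V_s$ and cites \cite{HR70}, while your direct Bochner-integral argument in the Fr\'echet space, relying on the completeness from Lemma~\ref{lem:2.1x}, works just as well and is more self-contained.
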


\begin{proof} Since $K$ preserves the defining family of norms, 
continuity of the $K$-action on $\mathscr H^{\omega,r}$ follows if we 
show that all orbit maps are continuous 
at $\mathbf 1_K$, where $\mathbf 1_K$ denotes the identity element of $K$. 
Let $v \in \mathscr H^{\omega,r}$ and 
suppose that $k_m \to \mathbf 1_K$ in $K$. 
Then 
\[ p_s(\pi(k_m)v-v) 
= \sum_{n = 0}^\infty \frac{s^n}{n!} q_n(\pi(k_m)v-v) \] 
and \[
q_n(\pi(k_m)v-v) \leq q_n(\pi(k_m)v) + q_n(v) = 2 q_n(v).
\] 
Since $K$ acts continuously on $\mathscr H^\infty$, 
$q_n(\pi(k_m)v-v) \to 0$ for every $n \in \mathbb{N}$, and since 
$p_s(v) < \infty$, the Dominated Convergence Theorem implies that 
$p_s(\pi(k_n)v-v) \to 0$. 

The fact that $\mathscr H^{\omega,r}$ is complete implies that it can be considered 
as a subspace of the product space 
$\prod_{s < r} \mathscr V_s$, where $\mathscr V_s$ denotes the 
completion of $\mathscr H^{\omega,r}$ 
with respect to the norm $p_s$. We thus obtain continuous isometric 
representations of $K$ on the Banach spaces $\mathscr V_s$, which leads by 
integration to representations of $L^1(K)$ on these spaces 
(see \cite[(40.26)]{HR70}). 
Finally, since  $\mathscr H^{\omega,r} \subseteq \prod_{s < r} \mathscr V_s$ is closed by completeness (Lemma~\ref{lem:2.1x}) and 
$K$-invariant, it is also invariant under $L^1(K)$. 
\end{proof}

From now on assume that $r$ is small enough such that the exponential 
function of the simply connected Lie group $\tilde{G}^\mathbb C$ with Lie algebra 
$\germ g^\mathbb C$ maps $B_r$ diffeomorphically onto an open subset of 
$\tilde{G}^\mathbb C$. 
For every $X \in \germ g^\mathbb C$ the corresponding 
left and right invariant vector fields 
define differential operators on $\exp(B_r)$ by 
\[ (L_X f)(g) = \frac{d}{dt}\bigg|_{t=0} f(g \exp(tX))\quad \textrm{ and } \quad 
(R_X f)(g) = \frac{d}{dt}\bigg|_{t=0} f(\exp(tX)g).\] 
Define similar operators $L_X^*$ and $R_X^*$ on $B_r$ by 
\[ L_X^*(f  \circ \exp|_{B_r}) = (L_X f) \circ \exp|_{B_r} 
\quad \textrm{ and } \quad 
R_X^*(f  \circ \exp|_{B_r}) = (R_X f) \circ \exp|_{B_r}.\] 
One can see that
\begin{equation}
  \label{eq:diffop}
L_X^*f_v = f_{\mathsf{d}\pi(X)v} 
\quad \textrm{ and } \quad R_X^*f_v = \mathsf{d}\pi(X) \circ f_v.
\end{equation}
If
$\mathpzc{Hol}(B_r,\mathscr H)$ denotes the Fr\'echet 
space of holomorphic $\mathscr H$-valued functions on 
$B_r$, then the subspace 
$\Hol(B_r, \cH)^\g$ defined by
\[ \Hol(B_r, \cH)^\g = \{ f \in \mathpzc{Hol}(B_r, \mathscr H) \ |\  
R_X^* f = \mathsf{d}\pi(X) \circ f
\textrm{ for every }X\in\germ g\},\] 
is a closed subspace, hence a Fr\'echet space.
Therefore the map
\begin{equation}
  \label{eq:eval}
\mathrm{ev}_0 : \Hol(B_r, \cH)^\g \to \mathscr H, \quad 
f \mapsto f(0) 
\end{equation}
is a continuous linear isomorphism onto $\mathscr H^{\omega,r}$, 
hence a topological isomorphism by the 
Open Mapping Theorem (see \cite[Thm.~2.11]{Ru73}).  

This implies in particular that 

\begin{lemma} \label{lem:invlem} The subspace $\mathscr H^{\omega,r} 
\subeq \cH$ is invariant 
under $\mathscr U(\germ g^\mathbb C)$. 
\end{lemma}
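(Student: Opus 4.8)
The plan is to reduce the statement to a single generator and then transport the problem along the topological isomorphism $\mathrm{ev}_0$ of \eqref{eq:eval}. Since $\mathscr U(\g^\C)$ is generated as an associative algebra by $\g^\C$ and $\dd\pi$ is $\mathbb C$-linear, it suffices to prove that $\dd\pi(X)(\mathscr H^{\omega,r}) \subeq \mathscr H^{\omega,r}$ for every $X \in \g^\C$; the general case then follows by induction on the length of a product of elements of $\g^\C$, using that $\dd\pi$ is an algebra homomorphism on $\mathscr U(\g^\C)$. Recalling that $\mathrm{ev}_0 \: \Hol(B_r,\cH)^\g \to \mathscr H^{\omega,r}$, $f \mapsto f(0)$, is a topological isomorphism with inverse $v \mapsto f_v$, I would fix $X \in \g^\C$ and consider the left invariant differential operator $L_X^*$. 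As a first order holomorphic differential operator in the exponential chart, $L_X^*$ maps $\Hol(B_r,\cH)$ continuously into itself, and the first identity in \eqref{eq:diffop} gives $\mathrm{ev}_0(L_X^* f_v) = (L_X^* f_v)(0) = \dd\pi(X)v$. Hence, granting that $L_X^*$ preserves the closed subspace $\Hol(B_r,\cH)^\g$, the element $\dd\pi(X)v = \mathrm{ev}_0(L_X^* f_v)$ lies in $\mathscr H^{\omega,r}$, which is exactly what is needed.

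The heart of the argument is therefore to verify that $L_X^*$ preserves $\Hol(B_r,\cH)^\g$. Here I would use that left and right invariant vector fields on $\tilde G^\C$ commute, so that $L_X^*$ and $R_Y^*$ commute on $\Hol(B_r,\cH)$ for every $Y \in \g$. For $f \in \Hol(B_r,\cH)^\g$ this yields $R_Y^*(L_X^* f) = L_X^*(R_Y^* f) = L_X^*(\dd\pi(Y)\circ f)$, and it remains to move $\dd\pi(Y)$ through the differentiation $L_X^*$, i.e. to establish $L_X^*(\dd\pi(Y)\circ f) = \dd\pi(Y)\circ(L_X^* f)$. Writing $L_X^*$ locally as $\sum_i c_i \partial_{z_i}$ with holomorphic coefficients, this amounts to the componentwise identity $\partial_{z_i}(\dd\pi(Y)\circ f) = \dd\pi(Y)\circ \partial_{z_i} f$, after which $R_Y^*(L_X^* f) = \dd\pi(Y)\circ(L_X^* f)$ for all $Y \in \g$, which is precisely the defining condition for $L_X^* f \in \Hol(B_r,\cH)^\g$.

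The main obstacle is this last identity, since $\dd\pi(Y)$ is unbounded and differentiation does not commute with unbounded operators for free. I would handle it using that $\dd\pi(Y)$ is closed: fixing a base point, the difference quotients of $f$ converge to $\partial_{z_i}f$, while their images under $\dd\pi(Y)$ are the difference quotients of the holomorphic function $\dd\pi(Y)\circ f = R_Y^* f$ and hence converge to $\partial_{z_i}(R_Y^* f)$; closedness of $\dd\pi(Y)$ then forces $\partial_{z_i}f(Z) \in \mathrm{Dom}(\dd\pi(Y))$ together with the desired equality. Alternatively, one can bypass the invariance of $\Hol(B_r,\cH)^\g$ altogether: since $L_X^*$ preserves holomorphy, $L_X^* f_v = f_{\dd\pi(X)v}$ is holomorphic on all of $B_r$, so the defining power series of $f_{\dd\pi(X)v}$ converges on $B_r$, giving $\dd\pi(X)v \in \mathscr H^{\omega,r}$ directly. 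The reduction to one generator then completes the proof that $\mathscr H^{\omega,r}$ is invariant under $\mathscr U(\g^\C)$.
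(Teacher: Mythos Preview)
Your proposal is correct, and in fact the short alternative you give at the very end is exactly the paper's argument: the paper simply records \eqref{eq:diffop}, notes that $L_X^*$ is a holomorphic first-order differential operator on $B_r$ (hence preserves $\Hol(B_r,\cH)$), and reads off that $f_{\dd\pi(X)v}=L_X^*f_v$ is holomorphic on $B_r$, so $\dd\pi(X)v\in\cH^{\omega,r}$; the reduction to a single generator is as you say.

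Your main route---showing that $L_X^*$ preserves the equivariance subspace $\Hol(B_r,\cH)^\g$ via commutation of left and right invariant vector fields and a closedness argument for $\dd\pi(Y)$---is also valid but does strictly more work than necessary. Once the direct argument gives $\dd\pi(X)v\in\cH^{\omega,r}$, the bijectivity of $\mathrm{ev}_0$ already forces $L_X^*f_v=f_{\dd\pi(X)v}\in\Hol(B_r,\cH)^\g$, so the equivariance check you carry out is a consequence rather than an ingredient. The only delicate point in your longer route is that the closed-operator argument a priori lands $(L_X^*f)(Z)$ in the domain of the closure $\overline{\dd\pi(Y)}$ rather than in $\cH^\infty$ itself; this is harmless here, but it is another reason to prefer the direct argument you already identified.
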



\subsection{Spectral theory for analytic vectors} 

We have already seen in Lemma~\ref{lem:2.2} 
that if $(\pi, \mathscr H)$ is a  
unitary representation of $G$ then the subspaces $\mathscr H^{\omega, r}$ 
are invariant under the action of the convolution algebras of certain 
subgroups $K \subeq G$. As a consequence, we shall now derive 
that elements of 
spectral subspaces of certain unitary one-parameter groups can be approximated 
by analytic vectors. 

We begin by a lemma about the relation between one-parameter groups and 
spectral measures. Let $\germ B(\mathbb R)$ denote the space of 
Borel measurabe functions 
on $\mathbb R$ and $\mathscr S(\mathbb R)$ denote the Schwartz space of 
$\mathbb R$.
\begin{lemma} \label{lem:c.1} 
Let $\gamma : \mathbb R \to \mathbf U(\mathscr H)$ be a unitary representation of
the additive group of $\mathbb R$ and $A = A^* = -i\gamma'(0)$ be its self-adjoint 
generator, so that $\gamma(t) = e^{itA}$ in terms of measurable functional 
calculus. Then the following assertions hold.
\begin{description}
\item[\rm(i)]  For each $f \in L^1(\mathbb R,\mathbb C)$, we have 
$\gamma(f) = \hat f(A),$
where \[
\hat f(x) = \int_\mathbb R e^{ixy} f(y)\, dy
\] is the Fourier transform 
of $f$. 
\item[\rm(ii)]  Let $P : \germ B(\mathbb R) \to \mathcal L(\mathscr H)$ be the unique 
spectral measure with $A = P(\mathrm{id}_\mathbb R)$. 
Then for every closed subset $E \subseteq \mathbb R$ the condition 
$v \in P(E)\mathscr H$ is equivalent to $\gamma(f)v = 0$ for 
every $f \in \mathscr S(\mathbb R)$ with $\hat f\big|_E = 0$. 
\end{description}
\end{lemma}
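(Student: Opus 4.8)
The plan is to establish (i) by a direct Fubini computation and then to deduce (ii) from it by feeding in suitable Schwartz test functions. Throughout I would work with the scalar measures attached to $P$: for $v,w \in \mathscr H$ let $\mu_{v,w}(\cdot) = \langle P(\cdot)v, w\rangle$, a complex Borel measure of total variation at most $\|v\|\,\|w\|$, and write $\mu_v = \mu_{v,v}$, a finite positive measure of total mass $\|v\|^2$.

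For (i), recall that $\gamma(f) = \int_{\mathbb R} f(y)\gamma(y)\, dy$ is the integrated representation and that $\gamma(y) = e^{iyA} = \int_{\mathbb R} e^{iyx}\, dP(x)$. Testing against $v,w$ gives $\langle\gamma(f)v,w\rangle = \int_{\mathbb R}\int_{\mathbb R} f(y)e^{iyx}\, d\mu_{v,w}(x)\, dy$. Because $\int_{\mathbb R}\int_{\mathbb R}|f(y)|\, d|\mu_{v,w}|(x)\, dy \le \|f\|_1\,\|v\|\,\|w\| < \infty$, Fubini's theorem lets me carry out the $y$-integration first, producing $\int_{\mathbb R}\hat f(x)\, d\mu_{v,w}(x) = \langle\hat f(A)v,w\rangle$. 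Since $v,w$ are arbitrary this yields $\gamma(f) = \hat f(A)$; the only point to verify is the absolute-integrability estimate justifying the interchange.

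For (ii) I would use the two standard identities $\|\hat f(A)v\|^2 = \int_{\mathbb R}|\hat f|^2\, d\mu_v$ and $v \in P(E)\mathscr H \iff P(E)v = v \iff \mu_v(\mathbb R\setminus E) = 0$. The forward implication is then immediate: if $v \in P(E)\mathscr H$ then $\mu_v$ is concentrated on $E$, so any $f \in \mathscr S(\mathbb R)$ with $\hat f|_E = 0$ satisfies $\|\gamma(f)v\|^2 = \int_E|\hat f|^2\, d\mu_v = 0$ by (i). For the converse I argue by contraposition: assuming $\mu_v(\mathbb R\setminus E) > 0$, inner regularity of the finite Borel measure $\mu_v$ provides a compact $K \subseteq \mathbb R\setminus E$ with $\mu_v(K) > 0$, and $K$ has positive distance from the closed set $E$. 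Exploiting that $\mathcal F$ maps $\mathscr S(\mathbb R)$ bijectively onto itself, I choose $g \in C_c^\infty(\mathbb R) \subseteq \mathscr S(\mathbb R)$ with $\mathrm{supp}(g) \subseteq \mathbb R\setminus E$ and $g \equiv 1$ on $K$, and set $f = \mathcal F^{-1}g$, so that $\hat f = g$ vanishes on $E$. This $f$ is an admissible test function, yet $\|\gamma(f)v\|^2 = \int|g|^2\, d\mu_v \ge \mu_v(K) > 0$, contradicting $\gamma(f)v = 0$; hence $\mu_v(\mathbb R\setminus E) = 0$ and $v \in P(E)\mathscr H$.

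I expect the converse direction of (ii) to be the only genuine obstacle. It rests on two structural facts rather than on any computation: the inner regularity needed to capture the mass of $\mu_v$ by a compact subset of the open set $\mathbb R\setminus E$, and the self-duality of $\mathscr S(\mathbb R)$ under the Fourier transform, which is exactly what permits the construction of an honest Schwartz function $f$ whose transform is a prescribed bump supported away from $E$.
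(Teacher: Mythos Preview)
Your proof is correct and follows essentially the same line as the paper's, particularly for (ii), where both arguments pass to a compact subset of the open complement $E^c$, build a $C_c^\infty$ bump function supported there, and invoke the Fourier self-duality of $\mathscr S(\mathbb R)$ to produce the required test function. The only minor difference is in (i): the paper first reduces to cyclic representations, realized as multiplication on $L^2(\mathbb R,\mu)$, and computes pointwise, whereas you work directly with the scalar spectral measures $\mu_{v,w}$ and apply Fubini---both are standard and yield the same conclusion.
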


\begin{proof} 
Since the unitary representation $(\gamma,\mathscr H)$ is a direct sum 
of cyclic representations, it suffices to prove the assertions for 
cyclic representations. Every cyclic representation of 
$\mathbb R$ is equivalent to the representation on 
some space $\mathscr H = L^2(\mathbb R,\mu)$, where $\mu$ is a Borel probability 
measure on $\mathbb R$ and $(\gamma(t)\xi)(x) = e^{itx}\xi(x)$ 
(see \cite[Thm.~VI.1.11]{Ne00}). 

(i) This means that $(A\xi)(x) = x\xi(x)$, so that 
$\hat f(A)\xi(x) = \hat f(x)\xi(x)$. For 
every $f \in L^1(\mathbb R,\mathbb C)$ 
the equalities 
\[ (\gamma(f)\xi)(x) 
= \int_\mathbb R f(t) e^{itx}\xi(x)\, dt = \hat f(x)\xi(x) \] 
hold in the space $\mathscr H = L^2(\mathbb R,\mu)$.

(ii) In terms of functional calculus, we have 
$P(E) = \chi_E(A)$, where $\chi_E$ is the characteristic function of $E$. 
If $\hat f\big|_E = 0$, then Part (i) and the fact that $\hat f \chi_E = 0$ 
imply that 
\[ 0 = (\hat f \cdot \chi_E)(A) 
= \hat f(A) \chi_E(A) = \gamma(f)P(E). 
\]

Conversely, suppose that $v \in \mathscr H$ satisfies 
$\gamma(f)v =0$ for every $f \in \mathscr S(\mathbb R)$ with $\hat f\big|_E = 0$. 
If $v \not\in P(E)\mathscr H$, then $P(E^c)v \neq 0$, and since $E^c$ is open and a 
countable union of compact subsets, there exists a compact subset 
$B \subseteq E^c$ with $P(B)v \neq 0$. 
Let $\psi \in C^\infty_c(\mathbb R)$ be such that $\psi\big|_B = 1$ and 
$\mathrm{supp}(\psi) \subseteq E^c$. Then 
\[ 0\neq P(B) v = \chi_B(A) v 
= (\chi_B \cdot \psi)(A) v = \chi_B(A)\psi(A)v  
\] 
implies that $\psi(A)v \neq 0$. 
Since the Fourier transform defines a bijection 
$\mathscr S(\mathbb R) \to \mathscr S(\mathbb R)$ (\cite{Ru73}), 
there exists an $f \in \mathscr S(\mathbb R)$ 
with $\hat f= \psi$. Then 
$\gamma(f)v = \hat f(A)v = \psi(A)v \neq 0$, contradicting our assumption. 
This implies that $v \in P(E)\mathscr H$. 
\end{proof}

\begin{proposition} \label{prop:c.3} 
Let $(\pi, \mathscr H)$ be a  unitary representation 
of the Lie group $G$ and $X \in \germ g$ such that the group 
$e^{\mathbb R\mathrm{ad} (X)}$ 
preserves a norm $\|\cdot\|$ on $\germ g^\mathbb C$. 
If $
P : \germ B(\mathbb R) \to \mathcal L(\mathscr H)$
is
the spectral measure 
of the unitary one-parameter group $\pi_X(t) = \pi(\exp(tX))$
then for every open subset $E \subseteq \mathbb R$ the subspace
$(P(E) \mathscr H) \cap \mathscr H^{\omega,r}$ is dense in $P(E)\mathscr H^{\omega,r}$. 
\end{proposition}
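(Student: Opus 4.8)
The plan is to show, for each $w \in \cH^{\omega,r}$, that the vector $P(E)w$ can be approximated in the Hilbert space norm by vectors lying in $(P(E)\cH)\cap\cH^{\omega,r}$, the approximants being produced as $\pi_X(f)w$ for suitable $f \in \mathscr S(\R)$ whose Fourier transforms are supported in $E$. The starting point is that the one-parameter group $\pi_X(t) = \pi(\exp(tX))$ acts on $\cH^{\omega,r}$: since $\mathrm{Ad}(\exp(tX)) = e^{t\ad(X)}$ preserves the norm $\|\cdot\|$ by hypothesis, Lemma~\ref{lem:2.2} (applied to $\exp(\R X)$) shows that this action is continuous and preserves each norm $p_s$. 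It therefore integrates to an action of $L^1(\R)$ leaving $\cH^{\omega,r}$ invariant, so that $\pi_X(f)w \in \cH^{\omega,r}$ for every $f \in L^1(\R)$ and every $w \in \cH^{\omega,r}$.

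The key algebraic step is to identify these operators spectrally. By Lemma~\ref{lem:c.1}(i) we have $\pi_X(f) = \hat f(A)$, where $A = -i\pi_X'(0)$ is the generator with spectral measure $P$. Hence, if I arrange $\hat f$ to be supported in $E$, then $\hat f = \chi_E\hat f$ as functions, whence $P(E)\hat f(A) = (\chi_E\hat f)(A) = \hat f(A)$, so $\pi_X(f)w = \hat f(A)w$ lies in $P(E)\cH$. Combined with the previous paragraph, any such $\pi_X(f)w$ lies in $(P(E)\cH)\cap\cH^{\omega,r}$.

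It remains to produce enough such $f$ to recover $P(E)w$ in the limit, and this is where openness of $E$ enters. I would exhaust $E$ by compact sets and choose $g_n \in C^\infty_c(\R)$ with $0 \le g_n \le 1$, $\mathrm{supp}(g_n)\subseteq E$, and $g_n \to \chi_E$ pointwise (for instance monotonically increasing). Since $g_n$ is Schwartz and the Fourier transform is a bijection of $\mathscr S(\R)$, there is an $f_n \in \mathscr S(\R)$ with $\hat f_n = g_n$. By the preceding two paragraphs, $\pi_X(f_n)w = g_n(A)w \in (P(E)\cH)\cap\cH^{\omega,r}$, and by the spectral theorem
\[
\|g_n(A)w - P(E)w\|^2 = \int_\R |g_n - \chi_E|^2\, d\mu_w,
\]
where $\mu_w = \langle P(\cdot)w, w\rangle$ is the finite scalar spectral measure of $w$. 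The Dominated Convergence Theorem then yields $g_n(A)w \to \chi_E(A)w = P(E)w$ in $\cH$, which proves the density since $w$ was arbitrary.

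The main obstacle is to meet two competing requirements on the approximants simultaneously: they must lie in the spectral subspace $P(E)\cH$, which forces $\mathrm{supp}(\hat f_n)\subseteq E$, while remaining analytic vectors in $\cH^{\omega,r}$. The first is delivered by the spectral identification of Lemma~\ref{lem:c.1}(i) together with the openness of $E$, which is exactly what permits the cutoffs $g_n$ supported in $E$ to increase to $\chi_E$; the second is precisely the content of Lemma~\ref{lem:2.2}, namely that the $L^1$-action of the norm-preserving one-parameter group $\pi_X$ leaves $\cH^{\omega,r}$ invariant. Once these two facts are in place, the approximation is a routine application of the spectral theorem.
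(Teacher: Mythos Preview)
Your proof is correct and follows essentially the same route as the paper: invoke Lemma~\ref{lem:2.2} with $K=\exp(\R X)$ to get that $\pi_X(f)$ preserves $\cH^{\omega,r}$ for $f\in L^1(\R)$, choose smooth cutoffs $g_n\in C_c^\infty(\R)$ supported in the open set $E$ with $g_n\to\chi_E$ pointwise, take $f_n\in\mathscr S(\R)$ with $\hat f_n=g_n$, and use Lemma~\ref{lem:c.1}(i) together with the spectral theorem to conclude $\pi_X(f_n)w=g_n(A)w\in(P(E)\cH)\cap\cH^{\omega,r}$ and $g_n(A)w\to P(E)w$. The only cosmetic difference is that the paper phrases the limit via the explicit estimate $\|h_n(A)w-w\|^2\le\|P(E\setminus E_n)w\|^2$ rather than the Dominated Convergence Theorem, and writes the target vector as $w=P(E)v$ with $v\in\cH^{\omega,r}$ before observing $h_n(A)P(E)v=h_n(A)v$.
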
  

\begin{proof} On $\mathscr H^{\omega,r}$ we consider the Fr\'echet topology 
defined by the seminorms $(p_s)_{s < r}$ 
in Lemma~\ref{lem:2.1x}. Applying Lemma~\ref{lem:2.2}
to $K = \exp(\mathbb R X)$ implies that
all of these seminorms are invariant under $\pi_X(\mathbb R)$ 
and $\pi_X$ defines a continuous representation 
of $\mathbb R$ on $\mathscr H^{\omega,r}$ which integrates to a representation 
\[ \tilde\pi_X : (L^1(\mathbb R,\mathbb C), *) \to 
\mathrm{End}_\mathbb C(\mathscr H^{\omega,r})
\]
of the convolution algebra
that is given by
\[
\tilde\pi_X(f) = \int_\mathbb R f(t)\pi_X(t)\, dt. 
\]
This essentially means that the operators 
$\tilde\pi_X(f)$ of the integrated representation 
$L^1(\mathbb R) \to \mathcal L(\mathscr H)$ preserve the subspace 
$\mathscr H^{\omega,r}$. 

Next we write the open set $E$ as the union of the compact subsets 
\[
E_n := \Big\{ t \in E \Big| |t| \leq n, \mathrm{dist}(t,E^c) \geq \frac{1}{n}\Big\} 
\] 
and observe that $\bigcup_n P(E_n) \mathscr H$ is dense in $P(E)\mathscr H$.  
For every $n$, there exists a compactly supported function 
$h_n \in C^\infty_c(\mathbb R,\mathbb R)$ such that $\mathrm{supp}(h_n) \subseteq E$, 
$0 \leq h_n \leq 1$, and $h_n\big|_{E_n} = 1$. Let $f_n \in \mathscr S(\mathbb R)$ with 
$\hat f_n = h_n$. Then 
\[ \tilde\pi_X(f_n) = \hat f_n(-i\mathsf{d}\pi(X)) = h_n(-i\mathsf{d}\pi(X)) \] 
and consequently
\[ P(E_n)\mathscr H \subseteq \tilde\pi_X(f_n)\mathscr H \subseteq 
P(E)\mathscr H. \]
Therefore the subspace 
$\tilde\pi_X(f_n)\mathscr H^{\omega,r}$ of $\mathscr H^{\omega,r}$ is contained 
in $P(E)\mathscr H$. 
If $w = P(E)v$ for some $v \in \mathscr H^{\omega,r}$
then 
\[ \tilde\pi_X(f_n)w  = \tilde\pi_X(f_n)P(E)v = \tilde\pi_X(f_n)v
\in \mathscr H^{\omega,r}\] 
and 
\[ \|\tilde\pi_X(f_n)w-w\|^2 
=  \|h_n(-i\mathsf{d}\pi(X))w -w\|^2 \leq \|P(E\backslash E_n)w\|^2 \to 0\] 
from which it follows that $\tilde\pi_X(f_n)w \to w$. 
\end{proof}

\begin{proposition} \label{prop:4.3} If $Y \in \germ g^\mathbb C$ satisfies 
$[X,Y] = i\mu Y$ then for every open subset 
$E \subseteq \mathbb R$ the spectral measure of $\pi_X$ satisfies 
\begin{equation}
  \label{eq:shift}
\mathsf{d}\pi(Y)\big(P(E)\mathscr H  \cap \mathscr H^\infty\big) 
\subseteq P(E+ \mu)\mathscr H. 
\end{equation}
\end{proposition}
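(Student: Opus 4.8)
The plan is to promote the infinitesimal relation $[X,Y] = i\mu Y$ to the group level and then read off the spectral shift from the functional calculus of the generator $A = -i\,\mathsf{d}\pi(X)$, whose spectral measure is $P$. Since $\mathrm{ad}(X)Y = [X,Y] = i\mu Y$ we have $\mathrm{Ad}(\exp(tX))Y = e^{t\,\mathrm{ad}(X)}Y = e^{it\mu}Y$, so the standard covariance $\pi(g)\,\mathsf{d}\pi(Z)\,\pi(g)^{-1} = \mathsf{d}\pi(\mathrm{Ad}(g)Z)$ on the $G$-invariant space $\cH^\infty$ yields, for every $v \in \cH^\infty$,
\[
\pi_X(t)\,\mathsf{d}\pi(Y)\,v = e^{it\mu}\,\mathsf{d}\pi(Y)\,\pi_X(t)\,v .
\]
First I would record this identity, noting that $\pi_X(t)v \in \cH^\infty$ for all $t$, so that both sides are meaningful.

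Next I would integrate this against a Schwartz function. Fix $f \in \mathscr S(\R)$, put $g(t) = e^{it\mu}f(t)$, so that $\hat g = \hat f(\,\cdot\, + \mu)$, and use Lemma~\ref{lem:c.1}(i) to write $\pi_X(f) = \hat f(A)$. Because the hypothesis that $e^{\R\,\mathrm{ad}(X)}$ preserves a norm on $\g^\mathbb C$ makes $t \mapsto \pi_X(t)v$ bounded in every seminorm of $\cH^\infty$, the $\cH^\infty$-valued integrals below converge in $\cH^\infty$, and the continuity of $\mathsf{d}\pi(Y)$ on $\cH^\infty$ (Lemma~\ref{lem:4.2.1}) lets me pull it through the integral:
\[
\hat f(A)\,\mathsf{d}\pi(Y)\,v = \int_\R f(t)\,\pi_X(t)\,\mathsf{d}\pi(Y)\,v\,dt = \mathsf{d}\pi(Y)\int_\R g(t)\,\pi_X(t)\,v\,dt = \mathsf{d}\pi(Y)\,\hat f(A+\mu)\,v .
\]
Thus $\psi(A)\,\mathsf{d}\pi(Y)\,v = \mathsf{d}\pi(Y)\,\psi(A+\mu)\,v$ for every $\psi \in \mathscr S(\R)$ and every $v \in \cH^\infty$, with both sides lying in $\cH^\infty$ (recall that every $\psi \in \mathscr S(\R)$ equals $\hat f$ for some $f \in \mathscr S(\R)$).

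Finally I would fix $v \in P(E)\cH \cap \cH^\infty$, set $w = \mathsf{d}\pi(Y)v$ and $F = (E+\mu)^c$, and choose $\psi_n \in C^\infty_c(\R)$ with $0 \le \psi_n \le 1$, $\psi_n\big|_F = 1$, and supports shrinking to $F$, so that $\psi_n \to \chi_F$ pointwise. The identity above gives $\mathsf{d}\pi(Y)\,\psi_n(A+\mu)v = \psi_n(A)w$. Since the scalar measure $\|P(\cdot)v\|^2$ is carried by $E$, dominated convergence gives $\psi_n(A+\mu)v \to 0$ in $\cH$, while $\psi_n(A)w \to \chi_F(A)w = P(F)w$. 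As $\mathsf{d}\pi(Y)$ is closable (each $\mathsf{d}\pi(Z)$, $Z \in \g$, is skew-symmetric), closedness of its closure forces $P(F)w = 0$, i.e. $\mathsf{d}\pi(Y)v \in P(E+\mu)\cH$, which is the claim. The main obstacle throughout is the unbounded-operator bookkeeping: securing convergence of the $\cH^\infty$-valued integrals (where the norm-invariance hypothesis on $e^{\R\,\mathrm{ad}(X)}$ is essential), tracking domains so that every application of $\mathsf{d}\pi(Y)$ and every functional-calculus step in $A$ is legitimate, and invoking closedness to pass to the characteristic function $\chi_F$; once these are in place the spectral shift is exactly by $\mu$, with no loss at the boundary of $E+\mu$.
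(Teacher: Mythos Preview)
Your approach is essentially the paper's: derive the covariance $\pi_X(t)\,\mathsf d\pi(Y)=e^{it\mu}\,\mathsf d\pi(Y)\,\pi_X(t)$ on $\cH^\infty$, integrate against $f\in\mathscr S(\R)$ to obtain $\tilde\pi_X(f)\,\mathsf d\pi(Y)v=\mathsf d\pi(Y)\,\tilde\pi_X(e_\mu f)v$, and then pass to the spectral measure. The only difference is the endgame: the paper invokes Lemma~\ref{lem:c.1}(ii) directly (if $\hat f$ vanishes on $E+\mu$ then $(e_\mu f)\,\hat{}$ vanishes on $E$, so $\tilde\pi_X(e_\mu f)v=0$, hence $\tilde\pi_X(f)w=0$ for all such $f$, whence $w\in P(E+\mu)\cH$), whereas you run a functional-calculus approximation and use closability of $\mathsf d\pi(Y)$. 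Both are fine.

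There is, however, a concrete slip in your last step. You ask for $\psi_n\in C_c^\infty(\R)$ with $\psi_n|_F=1$, where $F=(E+\mu)^c$. Since $E$ is open and typically bounded or with bounded complement only by accident, $F$ is closed and generically \emph{unbounded}; no compactly supported function can be identically $1$ on such an $F$. The remedy is immediate: drop the requirement $\psi_n|_F=1$ and simply take $\psi_n\in C_c^\infty(\R)$ with $0\le\psi_n\le 1$ and $\psi_n\to\chi_F$ pointwise (e.g.\ multiply a smooth function that is $1$ on $F\cap[-n,n]$ and supported in a $1/n$-neighborhood of $F$ by a cutoff). Your dominated-convergence arguments use only $|\psi_n|\le 1$ and $\psi_n\to\chi_F$ pointwise: for $x\in E$ one has $x+\mu\notin F$, so $\psi_n(x+\mu)\to 0$ and hence $\psi_n(A+\mu)v\to 0$; and $\psi_n(A)w\to\chi_F(A)w=P(F)w$. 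With this correction the closability step goes through and yields $P(F)w=0$, i.e.\ $w\in P(E+\mu)\cH$ exactly as claimed.
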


\begin{proof}
To verify this relation, we first observe that 
\[ \pi_X(t) \mathsf{d}\pi(Y)v 
= \mathsf{d}\pi(e^{t \mathrm{ad} X}(Y))\pi_X(t)v 
= e^{it\mu} \mathsf{d}\pi(Y)\pi_X(t)v  \]
for every $v \in \mathscr H^\infty$. 
For $f \in \mathscr S(\mathbb R)$, the continuity of the map 
\[ \mathscr S(\mathbb R) \to \mathscr H^\infty, \quad f \mapsto \tilde\pi_X(f)v\]
leads to 
\[ \tilde\pi_X(f)\mathsf{d}\pi(Y)v 
= \mathsf{d}\pi(Y) \int_\mathbb R f(t) e^{it\mu} \pi_X(t)v  
= \mathsf{d}\pi(Y) \tilde\pi_X(f \cdot e_\mu)v
\]
where $e_\mu(t) = e^{it\mu}$. If $v \in P(E)\mathscr H$ 
and $\hat f$ vanishes on $E+\mu$ then the function
$(e_\mu f)\,\hat{} = \hat f(\mu + \cdot) $
vanishes on $E$, and Lemma 
\ref{lem:c.1}(ii) implies that $\tilde\pi_X(f \cdot e_\mu) v = 0$.
Applying Lemma~\ref{lem:c.1}(ii) again, we 
derive that $\mathsf{d}\pi(Y)v \in P(E+\mu)\mathscr H$. 
\end{proof}

\subsection{Application to irreducible unitary representations of Lie supergroups}

Let $(\pi,\rho^\pi, \mathscr H)$ be an irreducible unitary representation of the 
Lie supergroup $\mathcal G = (G, \germ g)$. Before we turn to the fine 
structure of such a representation, we verify 
that Lemma~\ref{lem:invlem} generalizes to the super context. 

\begin{lemma} \label{lem:invlem2} The subspace $\mathscr H^{\omega,r} 
\subeq \cH$ is invariant 
under $\mathscr U(\germ g^\mathbb C)$. 
\end{lemma}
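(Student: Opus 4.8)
The plan is to reduce the super statement (Lemma~\ref{lem:invlem2}) to the purely even case already established in Lemma~\ref{lem:invlem}. Recall that Lemma~\ref{lem:invlem} asserts that $\mathscr H^{\omega,r}$ is invariant under $\mathscr U(\germ g_\eev^\mathbb C)$, coming from the identification $\mathrm{ev}_0 \colon \Hol(B_r,\cH)^\g \to \mathscr H^{\omega,r}$ together with the formula $L_X^* f_v = f_{\mathsf d\pi(X)v}$ of \eqref{eq:diffop}. Since $\mathscr U(\germ g^\mathbb C)$ is generated by $\mathscr U(\germ g_\eev^\mathbb C)$ together with the odd part $\germ g_\ood^\mathbb C$, it suffices to show that $\mathscr H^{\omega,r}$ is invariant under $\rho^\pi(X)$ for each $X \in \germ g_\ood^\mathbb C$; combining this with the already-known even invariance and using that these operators generate $\mathscr U(\germ g^\mathbb C)$ as an algebra finishes the argument.

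First I would fix $X \in \germ g_\ood$ and recall from Definition~\ref{defofunirep}(iii) that $\rho^\pi(X)^2 = \tfrac{1}{2i}\rho^\pi([X,X]) = \tfrac{1}{2i}\mathsf d\pi([X,X])$, which is (up to the scalar $\tfrac{1}{2i}$) an element of $\mathsf d\pi(\mathscr U(\germ g_\eev^\mathbb C))$. Thus $\rho^\pi(X)^2$ already preserves $\mathscr H^{\omega,r}$ by Lemma~\ref{lem:invlem}. The difficulty is that this controls the square of $\rho^\pi(X)$ but not $\rho^\pi(X)$ itself. The key step I would take is to produce a holomorphic-function description of the odd action parallel to the one used for the even case: for $v \in \mathscr H^{\omega,r}$ I want to exhibit an explicit holomorphic $\cH$-valued function on $B_r$ whose value at $0$ is $\rho^\pi(X)v$ and which lies in (a suitable super-analogue of) $\Hol(B_r,\cH)^\g$, so that $\rho^\pi(X)v$ is again a convergent analytic vector in $\mathscr H^{\omega,r}$. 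Concretely, since the even orbit map of $v$ is realized by $f_v(Z) = \pi(\exp Z)v$ on $B_r$, I would compose $f_v$ with $\rho^\pi(X)$ using the commutation relation from Definition~\ref{defofunirep}(vi), namely $\rho^\pi(X)\pi(g) = \pi(g)\rho^\pi(\mathrm{Ad}(g)^{-1}X)$, and estimate the resulting series.

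The main obstacle will be the convergence and holomorphy bookkeeping for the odd operator, since $\rho^\pi(X)$ is only a symmetric (unbounded) operator on $\mathscr H^\infty$, not bounded, so one cannot simply apply it termwise to the power series \eqref{eq:posi} without an a priori estimate. The clean way around this is to use the seminorm machinery already set up: because $e^{\mathbb R\,\mathrm{ad}(X_\eev)}$-type bounds control the $q_n$, and because $\rho^\pi(X)^2 = \tfrac{1}{2i}\mathsf d\pi([X,X])$ lies in the even enveloping algebra whose action is continuous on $\mathscr H^{\omega,r}$ (Lemma~\ref{lem:invlem} combined with Lemma~\ref{lem:4.2.1}), one gets control of $\rho^\pi(X)v$ from control of $v$ and of $\rho^\pi(X)^2 v$. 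More precisely, I would argue that the map $v \mapsto \rho^\pi(X)v$ has a closed graph from $\mathscr H^{\omega,r}$ into itself: if $v_n \to v$ and $\rho^\pi(X)v_n \to w$ in the Fr\'echet topology of $\mathscr H^{\omega,r}$, then convergence in this topology implies convergence in $\cH$, and the continuity of $\rho^\pi(X)$ on $\mathscr H^\infty$ from Lemma~\ref{lem:4.2.1} forces $w = \rho^\pi(X)v$.

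Having shown that $\rho^\pi(X)$ maps $\mathscr H^{\omega,r}$ into $\cH$ with closed graph, I would invoke the Closed Graph Theorem for Fr\'echet spaces (as in the proof of Lemma~\ref{lem:4.2.1}, citing \cite[Thm.~2.15]{Ru73}) to conclude that $\rho^\pi(X)$ preserves $\mathscr H^{\omega,r}$ and is continuous there. For this to apply I must first verify that $\rho^\pi(X)v \in \mathscr H^{\omega,r}$ at all for $v$ in a dense subspace, which is where the holomorphic-function realization enters: on the dense set of vectors coming from $\Hol(B_r,\cH)^\g$ via $\mathrm{ev}_0$, the identity $\rho^\pi(X)v = \mathrm{ev}_0\big(Z \mapsto \rho^\pi(\mathrm{Ad}(\exp Z)^{-1}X)\,f_v(Z)\big)$ exhibits the image as an evaluation of a holomorphic function, using that $\mathrm{Ad}(\exp Z)^{-1}X$ depends holomorphically on $Z \in B_r$. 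Since $\mathscr U(\germ g^\mathbb C)$ is generated as an algebra by $\rho^\pi(\germ g_\eev^\mathbb C)$ and the $\rho^\pi(X)$ with $X \in \germ g_\ood$, and each such generator preserves $\mathscr H^{\omega,r}$, the whole of $\mathscr U(\germ g^\mathbb C)$ preserves $\mathscr H^{\omega,r}$, completing the proof.
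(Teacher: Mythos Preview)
Your overall strategy matches the paper's: reduce to the odd generators and, for $Y\in\g_\ood$ and $v\in\cH^{\omega,r}$, exhibit the orbit map of $\rho^\pi(Y)v$ as the holomorphic function $Z\mapsto \rho^\pi(e^{\ad Z}Y)\,f_v(Z)$ on $B_r$. But two things go wrong in the execution.

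First, the closed graph detour is a red herring. The Closed Graph Theorem, applied to a map $\cH^{\omega,r}\to\cH$ (or even $\cH^{\omega,r}\to\cH^{\omega,r}$, once you know the map lands there), yields continuity; it cannot upgrade the target from $\cH$ to $\cH^{\omega,r}$. So it does not help establish the invariance you want. Moreover, $\ev_0$ is a \emph{bijection} onto $\cH^{\omega,r}$, so your ``dense subspace coming from $\Hol(B_r,\cH)^\g$'' is all of $\cH^{\omega,r}$, and the density argument collapses; if the holomorphic-function step works at all, it already handles every $v\in\cH^{\omega,r}$.

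Second, and more seriously, you do not justify why $Z\mapsto\rho^\pi(e^{\ad Z}Y)\,f_v(Z)$ is holomorphic. Noting that $Z\mapsto e^{\ad Z}Y$ is holomorphic is only half of it: the operator $\rho^\pi(\cdot)$ is unbounded on $\cH$, so one cannot compose holomorphic maps blindly. The paper's key point, which your proposal skips, is that $f_v$ is holomorphic as a map $B_r\to\cH^\infty$ (with its Fr\'echet topology). This follows from the defining relation $\dd\pi(X)\circ f_v = R_X^*f_v$ for $X\in\g_\eev$, which shows each $\dd\pi(X)\circ f_v$ is holomorphic into $\cH$; by the definition of the topology on $\cH^\infty$ this forces $f_v\colon B_r\to\cH^\infty$ to be holomorphic. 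Then Lemma~\ref{lem:4.2.1} says the bilinear map $\g_\ood^\C\times\cH^\infty\to\cH^\infty$ is continuous, hence holomorphic, and the composition is legitimate. Without this step your formula is just formal.
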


\begin{proof} In view of Lemma~\ref{lem:invlem}, it only remains 
to show that, for every $Y \in \g_\ood$ and 
$v \in \cH^{\omega,r}$, we have 
$\rho^\pi(Y)v \in \cH^{\omega,r}$. For every 
$X \in \g_\eev \cap B_r$, we have the relation 
\begin{equation}
  \label{eq:comrel}
\pi(\exp X)\rho^\pi(Y)v 
= \rho^\pi(e^{\ad X}Y)\pi(\exp X)v 
= \rho^\pi(e^{\ad X}Y)f_v(X).
\end{equation}
The complex bilinear map 
\[ \g_\ood^\C \times \cH^\infty \to \cH^\infty, \quad 
(Z,v) \mapsto \rho^\pi(Z)v\] 
is continuous by Lemma~\ref{lem:4.2.1} and therefore holomorphic. 
Moreover, the map 
\[\g_\eev^\C \to \g_\ood^\C\quad,\quad X \mapsto e^{\ad X}Y
\] 
is holomorphic. Since compositions of holomorphic maps 
are holomorphic, it therefore suffices to show that 
$f_v(B_r) \subeq \cH^\infty$ and that the map $f_v \: B_r \to \cH^\infty$ 
is holomorphic. In fact, this implies that 
the map 
\[ \g_\eev \cap B_r \to \cH, \quad X \mapsto \pi(\exp X) \rho^\pi(Y) v\] 
extends holomorphically to $B_r$, i.e., 
$\rho^\pi(Y)v \in \cH^{\omega, r}$. 

We recall the topological isomorphism 
\[ \ev_0 \: \Hol(B_r, \cH)^\g \to \cH^{\omega,r}, \quad 
f \mapsto f(0).\]  
By definition of $\Hol(B_r, \cH)^\g$, we have for each 
$X \in \g_\eev$ the relation 
\[ \dd\pi(X) \circ f_v = R_X^* f_v,\] 
showing in particular that $\dd\pi(X) \circ f_v \: B_r \to \cH$ 
is a holomorphic function. From the definition of the 
topology on $\cH^\infty$, it therefore follows that 
$f_v$ is holomorphic as a map $B_r \to \cH^\infty$. 
\end{proof}

The following theorem clarifies the key features of the 
$\g$-representation on~$\cH^\infty$. 

\begin{theorem} \label{thm:hiweistr}
Let $(\pi,\rho^\pi, \mathscr H)$ be an irreducible 
unitary representation of the 
Lie supergroup $\mathcal G = (G, \germ g)$ 
which is \STAR -reduced and 
satisfies 
\[ \germ g_\eev = [\germ g_\ood, \germ g_\ood].\] 
Pick a regular element $X_0 \in \INT(\CONE(\mathcal G))$ 
and let $\ft=\ft_\eev\oplus\ft_\ood$ be the corresponding Cartan subsuperalgebra of 
$\g$ (see Lemma \ref{lemma-compact-cartan} and Proposition 
\ref{proppenkovserganova}). Suppose that no root vanishes on $X_0$. 
Then the following assertions hold.  
\begin{description}
\item[\rm(i)] $\ft_\eev$ is compactly embedded and 
$\Delta^+ = \{\ \alpha \in \Delta\ |\ \alpha(X_0) > 0\ \}$ 
satisfies $\Delta\backslash \{0\} = \Delta^+ \dot\cup - \Delta^+.$ 
\item[\rm(ii)] The space $\cH^{\ft}$ of $\ft$-finite elements 
in $\cH^\infty$ is an irreducible $\g$-module 
which is a $\ft_\eev$-weight module and dense in $\cH$. 
\item[\rm(iii)] The maximal eigenspace $\sV$ of $i\rho^\pi(X_0)$ 
is an irreducible finite dimensional $\ft$-module on which 
$\ft_\eev$ acts by some weight $\lambda \in \ft_\eev^*$. 
It generates the $\g$-module $\cH^\ft$ and all 
other $\ft_\eev$-weights in this space are of the form 
\[ \lambda - m_1\alpha_1 - \cdots - m_k\alpha_k, \quad 
\alpha_j \in \Delta^+,\ \  k\in\N,\,m_1,\ldots,m_k \in \N\cup \{0\}.\] 
\item[\rm(iv)] Two representations 
$(\pi,\rho^\pi, \mathscr H)$ and 
$(\pi',\rho^{\pi'}, \mathscr H')$ of $\cG$ are 
isomorphic 
if and only if the corresponding $\ft$-representations 
on $\sV$ and $\sV'$ are isomorphic. 
\end{description}
\end{theorem}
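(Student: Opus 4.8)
The plan is to treat $A := -i\rho^\pi(X_0)$ as a lowest-weight operator and to reconstruct the representation from the bottom of its spectrum, in analogy with highest weight theory. For~(i) I would assemble the structural results already available: as $X_0 \in \INT(\CONE(\mathcal G))$ is regular and $\CONE(\mathcal G)$ is pointed and generating (the latter because $\g_\eev = [\g_\ood,\g_\ood]$), Lemma~\ref{lemma-compact-cartan} makes $\ft_\eev = \ker(\ad X_0)$ a compactly embedded Cartan subalgebra of $\g_\eev$, Proposition~\ref{proppenkovserganova} produces the Cartan subsuperalgebra $\ft$, and Proposition~\ref{propsupercompactcartan} gives the root decomposition with $\Delta = -\Delta$. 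Since no root vanishes on $X_0$, each nonzero $\alpha$ has $\alpha(X_0) \neq 0$, and $\alpha(X_0) > 0 \Leftrightarrow (-\alpha)(X_0) < 0$ shows that $\Delta^+$ and $-\Delta^+$ partition $\Delta \setminus \{0\}$.

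For the analytic heart of~(ii)--(iii), I would first note that $A$ is self-adjoint by unitarity and that the dissipativity estimate~\eqref{eq:dissi} (applicable since $X_0 \in \CONE(\mathcal G)$) gives $A \geq 0$; hence $\mathrm{spec}(A)$ has a finite infimum $a_0$, whose eigenspace is the maximal eigenspace $\sV$ of $i\rho^\pi(X_0)$. By Proposition~\ref{prop:4.3} a root vector in $\g^{\C,\alpha}$ shifts the spectral subspaces of $A$ by $\alpha(X_0)$, so those with $\alpha \in -\Delta^+$ strictly lower the $A$-value. Fixing $\eps < \min\{\alpha(X_0) : \alpha \in \Delta^+\}$ and invoking Proposition~\ref{prop:c.3}, I can choose a nonzero analytic vector in the bottom strip $P([a_0,a_0+\eps))\cH$; applying any $A$-lowering root vector pushes it below $a_0$ and hence annihilates it, so this vector is a lowest-weight vector for the triangular decomposition.

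The next step is to see that $\sV$ is a finite-dimensional irreducible $\ft$-module on which $\ft_\eev$ acts by a single weight $\lambda$. Because $\ft_\ood \subseteq \g^{\C,0}$ commutes with $X_0$ it preserves each $A$-eigenspace, and $\ft_\eev$ is central in $\ft$ (being the zero weight space, $[\ft_\eev,\ft_\ood]=0$). On a common $\ft_\eev$-weight vector the identity $\rho^\pi(Z)\rho^\pi(W)+\rho^\pi(W)\rho^\pi(Z) = -i\rho^\pi([Z,W])$ of Definition~\ref{defofunirep}(iii), together with $[\ft_\ood,\ft_\ood]\subseteq\ft_\eev$ acting by the scalar $\lambda$, exhibits $\{\rho^\pi(Z)\}_{Z\in\ft_\ood}$ as a Clifford module; symmetry of the $\rho^\pi(Z)$ forces the form $Z\mapsto\lambda([Z,Z])$ to be positive semidefinite, its radical acts by zero, and the residual finite-dimensional Clifford algebra has only finite-dimensional irreducibles. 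Thus the lowest-weight vector from the previous step generates a finite-dimensional irreducible $\ft$-module, which is the $\sV$ of~(iii).

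Finally I would put $\cH^\ft = \mathscr U(\g^\C)\sV$. The PBW factorization of $\mathscr U(\g^\C)$ along $\g^\C = \germ n^- \oplus \ft^\C \oplus \germ n^+$, with $\germ n^\pm$ the $A$-raising and $A$-lowering root spaces, together with the annihilation of $\sV$ by the lowering spaces, shows that every $\ft_\eev$-weight in $\cH^\ft$ arises by applying $A$-raising root vectors (those in $\g^{\C,\alpha}$, $\alpha\in\Delta^+$, which lower the weight by $\alpha$) and is therefore of the form $\lambda - m_1\alpha_1 - \cdots - m_k\alpha_k$; Lemma~\ref{lem:invlem2} keeps this inside the analytic vectors, Proposition~\ref{prop:c.3} upgrades $\cH^\ft$ to a dense subspace, and irreducibility of $\cH$ forces $\cH^\ft$ to be an irreducible $\g$-module and $\sV$ to exhaust the maximal eigenspace. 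Part~(iv) then follows: an isomorphism of representations restricts to a $\ft$-isomorphism $\sV \to \sV'$, and conversely a $\ft$-isomorphism $\sV \to \sV'$ propagates uniquely along $\cH^\ft = \mathscr U(\g^\C)\sV$ to an intertwiner. The hard part will be the passage from boundedness of $\mathrm{spec}(A)$ to an \emph{attained}, finite-dimensional extreme weight space, since a priori $a_0$ need be neither an eigenvalue nor isolated and the $\ft_\eev$-weights need not be discrete; this is exactly why the analytic machinery is needed --- Proposition~\ref{prop:c.3} gives analytic vectors in arbitrarily thin strips, Proposition~\ref{prop:4.3} quantizes the shifts by the finitely many numbers $\alpha(X_0)$, and the Clifford structure bounds the multiplicities --- and combining these to exclude continuous spectrum at the bottom and to terminate the lowering process is the technical crux.
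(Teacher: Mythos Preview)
Your strategy is the paper's, and you have named exactly the right tools.  Two steps, however, are not yet arguments.

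The first is the one you flag yourself: going from an analytic $v_0$ in a thin bottom strip $P([a_0,a_0+\eps))\cH$ to the statement that $a_0$ is an eigenvalue.  The missing mechanism is a \emph{density} argument, not just quantized shifts.  Since $v_0\in\cH^{\omega,r}$ is killed by the $A$-lowering root spaces, PBW gives $\sU(\g^\C)v_0=\sU(\germ n^+)\sU(\ft^\C)v_0$; as $\ft^\C$ commutes with $X_0$ it keeps $v_0$ in the strip, while every nontrivial monomial in $\germ n^+$ raises the $A$-value by at least $\delta=\min_{\alpha\in\Delta^+}\alpha(X_0)>\eps$.  Hence $\sU(\g^\C)v_0\subseteq P([a_0,a_0+\eps))\cH+P([a_0+\delta,\infty))\cH$.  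By Lemma~\ref{lem:invlem2} this cyclic space sits in $\cH^{\omega,r}$, which is dense in $\cH$ by irreducibility; therefore the spectral interval $[a_0+\eps,a_0+\delta)$ is empty for every $\eps<\delta$, so $P([a_0,a_0+\eps))=P(\{a_0\})$ and $v_0$ is an honest eigenvector.  Note also that this gives only an $X_0$-eigenvector, not a $\ft_\eev$-weight vector, so your Clifford argument cannot start yet: one must either repeat the argument for a basis of regular elements in $\ft_\eev\cap\INT(\CONE(\cG))$, or use the integrated $L^1(T)$-action on $\cH^{\omega,r}$ (Lemma~\ref{lem:2.2}) to project an analytic vector of $\sV$ onto a $T$-weight component $v_\alpha$, and then a second density pass (now $\sU(\ft_\ood^\C)v_\alpha$ is finite-dimensional because $\sU(\ft_\ood^\C)$ is a Grassmann algebra) forces $\sV=\sV^\alpha$ to be a single weight space.

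The second gap is in~(iv).  Saying that a $\ft$-isomorphism $\phi:\sV\to\sV'$ ``propagates uniquely along $\sU(\g^\C)\sV$'' presupposes that $u\cdot v=0$ in $\cH^\ft$ implies $u\cdot\phi(v)=0$ in $(\cH')^\ft$, which is precisely what is at stake.  The clean fix is to work in the direct sum $\cK=\cH\oplus\cH'$: the graph $\Gamma(\phi)\subseteq\sV\oplus\sV'$ is annihilated by the lowering root spaces, so the $\g$-submodule $W=\sU(\g^\C)\Gamma(\phi)$ meets $\sV\oplus\sV'$ exactly in $\Gamma(\phi)$; since $W$ consists of analytic vectors its closure $\overline W$ is a proper $\cG$-invariant subspace of $\cK$ distinct from $\cH$ and $\cH'$, and Schur's Lemma then forces the two irreducible representations to be equivalent.
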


\begin{proof} (i) Proposition~\ref{propertiesofstarreduced} implies that
$\CONE(\mathcal G)$ is a pointed generating invariant cone and 
$\germ g_\eev$ has a Cartan subalgebra $\germ t_\eev$ which is 
compactly embedded in $\germ g$. Then 
the corresponding Cartan supersubalgebra is given by its centralizer 
$\ft = \sZ_\g(\ft_\eev)$. 
Pick a regular element $X_0 \in \germ t_\eev \cap \INT(\CONE(\mathcal G))$, 
so that $\Delta^+$ satisfies 
$\Delta\backslash \{0\} = \Delta^+ \dot\cup - \Delta^+.$ 

(ii) Recall from \eqref{eq:dissi} that $i \rho^\pi(X_0) \leq 0$. 
We want to prove the existence of an eigenvector of maximal 
eigenvalue for $i\rho^\pi(X_0)$. 
Let 
\[\delta = \min \{  \alpha(X_0) | \alpha \in \Delta^+\} \]
and note that $\delta > 0$. 
Let $P([a,b])$, $a\leq b \in \mathbb R$, 
denote the spectral projections of the selfadjoint operator 
$i\overline{\rho^\pi(X_0)}$ and put 
\[ \lambda = \sup(\mathrm{Spec}(i\overline{\rho^\pi(X_0)})) \leq 0.\] 

Since $(\pi,\rho^\pi,\mathscr H)$ is irreducible and the space 
$\mathscr H^\omega$ of analytic vectors is dense, there exists 
an $r > 0$ with $\mathscr H^{\omega,r} \neq\{0\}$.  
Then the invariance of $\mathscr H^{\omega,r}$ under 
$\mathscr U(\germ g^\mathbb C)$ (Lemma~\ref{lem:invlem2}) 
implies that $\mathscr H^{\omega,r}$ 
is dense in~$\mathscr H$. 
Hence Proposition~\ref{prop:c.3} implies that, 
for every $\eps > 0$, the intersection 
\[ P(]\mu - \eps,\mu]) \mathscr H \cap \mathscr H^{\omega, r}\] 
is dense in $P(]\mu - \eps,\mu]) \mathscr H$. 
In particular, it contains a non-zero vector 
$v_0$. We then obtain 
with Proposition~\ref{prop:4.3} for $\eps < \delta$ 
and $\alpha \in \Delta_+$: 
\[  \rho(\germ g^{\mathbb C,\alpha}) v_0 \subseteq P(]\mu, \infty[)\mathscr H = \{0\}.\] 
In view of the Poincar\'e--Birkhoff--Witt Theorem, this leads to  
\[ \mathscr U(\germ g^\C)v_0 = \mathscr U(\germ g^- \rtimes 
\germ t^\mathbb C)v_0.\] 
Since $\germ t^\mathbb C$ commutes with $\germ t_\eev$, the subspace 
$\mathscr U(\germ t^\mathbb C)v_0$ 
is contained in \\ $P([\mu-\eps,\mu])\mathscr H$, so that 
Proposition~\ref{prop:4.3} yields 
\[ \mathscr U(\germ g^\mathbb C)v_0 \subseteq 
P(]-\infty,\mu-\delta])\mathscr H + P([\mu - \eps, \mu])\mathscr H \] 
for every $\eps > 0$. As $\mathscr U(\germ g^\mathbb C)v_0$ is dense in 
$\mathscr H$, we obtain 
for every $\eps > 0$ the relation $P([\mu - \eps, \mu]) = P(\{\mu\})$. 
Hence $i\rho(X_0)v_0 = \mu v_0$. Since 
$\germ g^\mathbb C$ is spanned by $\mathrm{ad}(X_0)$-eigenvectors, the same holds for 
$\mathscr U(\germ g^\mathbb C)$, and hence for 
$\mathscr U(\germ g^\mathbb C)v_0$. This means that $i\rho^\pi(X_0)$ is diagonalizable. 
Repeating the same argument for other regular elements  in 
$\germ t \cap \INT(\CONE(\mathcal G))$ forming a basis of $\germ t$, 
we conclude that $\rho^\pi(\germ t)$ is diagonalizable, i.e., that 
$\mathscr H$ is the orthogonal direct sum of weight spaces for 
$\germ t$, resp., the corresponding group~$T$. 

Let $\mathscr V = P(\{\mu\})\mathscr H$ be the maximal eigenspace of
$i\rho^\pi(X_0)$. Then Proposition~\ref{prop:c.3} applied to sets of the 
form $E = ]\mu-\eps, \mu+\eps[$ implies that 
$\mathscr H^{\omega,r} \cap \mathscr V$ is dense in $\mathscr V$. 
Further $\mathscr V$ is $T$-invariant, hence 
an orthogonal direct sum of $T$-weight spaces. 
From Lemma~\ref{lem:2.2}, applied to $K = T$, we now derive that 
in each $T$-weight space $\mathscr V^\alpha(T)$, the intersection 
with $\mathscr H^{\omega,r}$ is dense. 

Let $v_\alpha \in \mathscr V^\alpha \cap \mathscr H^{\omega,r}$ be a 
$T$-eigenvector. From the density of 
$\mathscr U(\germ g^\mathbb C)v_\alpha = \mathscr U(\germ g^-)
\mathscr U(\germ t^\mathbb C)v_\alpha$ 
in $\mathscr H$ we then derive as above that 
\[ \mathscr U(\germ t^\mathbb C)v_\alpha = 
\mathscr U(\germ t_\ood^\mathbb C)v_\alpha \subseteq \mathscr V^\alpha \] 
is dense in $\mathscr V$. As $\mathscr U(\germ t_{\overline 1}^\mathbb C)$ 
is finite dimensional, 
this proves that $\mathscr V = \mathscr V^\alpha$ is finite dimensional and contained in $\mathscr H^{\omega, r}$. 

Since all $\ft_\eev$-weight spaces in $\sU(\g^-)$ are finite 
dimensional and $\sU(\ft_\ood)$ is finite dimensional, we conclude that 
$\sU(\g^\C)\sV$ is a locally finite $\ft$-module with 
finite $\ft_\eev$-multiplicities. In view of the finite multiplicities, 
its density in $\cH$ leads to the equality $\cH^\ft = \sU(\g^\C) \sV$. 
As this $\g$-module consists of analytic vectors, its irreducibility 
follows from the irreducibility of the $\cG$-representation 
on $\cH$.  

(iii) If $\mathscr V' \subseteq \mathscr V$ is a non-zero $\germ t$-submodule, then 
$\mathscr U(\germ t)\mathscr V'$ is dense in $\mathscr V$ and orthogonal to 
the subspace $\mathscr V'' = (\sV')^\bot$, 
which leads to $\mathscr V'' = \{0\}$. Therefore the $\germ t$-module
$\mathscr V$ is irreducible. All other assertions have already been 
verified above. 

(iv) Clearly, the equivalence of the $\cG$-representations 
implies equivalence of the $\ft$-representations on $\sV$ and $\sV'$. 

Suppose, conversely, that there exists a 
$\ft$-isomorphism $\phi \: \sV \to \sV'$. 
We consider the direct 
sum representation $\cK = {\cal H} \oplus {\cal H}'$ of $\cG$, 
for which 
\[ \cK^\ft = \cH^\ft \oplus (\cH')^\ft \] 
as $\g$-modules. Consider the 
$\g$-submodule $W \subeq \cK^\ft$ generated by the 
$\ft$-submodule 
\[ \Gamma(\phi) = \{ (v, \phi(v)) \: v \in \sV \} 
\subeq \sV \oplus \sV'.\] 
Since $\Gamma(\phi)$ is annihilated by 
$\g^+$, the PBW Theorem implies that 
\[ W 
= \sU(\g)\Gamma(\phi) = \sU(\g^-)\sU(\ft)\sU(\g^+) \Gamma(\phi) 
= \sU(\g^-)\Gamma(\phi).\] 
It follows that 
\[ W \cap (\sV \oplus \sV') = \Gamma(\phi)\] 
is the maximal eigenspace for $iX_0$ on $W$. 

As $W$ consists of analytic vectors, 
its closure $\overline W$ is a proper $G$-invariant subspace of $\cK$, so 
that we obtain a unitary $\cG$-representation on this space. 

If the two $\cG$ representations 
$(\pi,\rho^\pi, \mathscr H)$ and 
$(\pi',\rho^{\pi'}, \mathscr H')$ are not equivalent, 
then Schur's Lemma implies that 
$\cH$ and $\cH'$ are the only non-trivial 
$\cG$-invariant subspaces of $\cK$, contradicting the 
existence of $\overline W$. 
\end{proof}

\begin{remark}  (a) The preceding theorem suggests to call 
the $\g$-representation on $\cH^\ft$ a {\it highest weight representation} 
because it is generalized by a weight space space of 
$\ft_\eev$ which is an irreducible $\ft$-module, hence a 
(finite dimensional) irreducible 
module of the Clifford Lie superalgebra 
$\ft_\ood + [\ft_\ood, \ft_\ood]$. 

(b) Suppose that $\g$ is $\star$-reduced with 
$\g_\eev = [\g_\ood,\g_\ood]$. 
Let $\cH$ be a complex Hilbert space and 
$\cD \subeq \cH$ a dense subspace on which we have a 
unitary representation $(\rho,\cD)$ of $\g$ in the sense that 
(i), (iii), (v) in Definition~\ref{defofunirep} are 
satisfied. 

Suppose further that the action of 
$\ft_\eev$ on $\cD$ is diagonalizable with finite dimensional 
weight spaces. Then the $\g$-module $\cD$ is semisimple, 
hence irreducible if it is generated by a $\ft_\eev$-weight space 
$V$ on which $\ft$ acts irreducibly. 


The finite dimensionality of the $\ft_\eev$-weight spaces 
on $\cD$ also implies the semisimplicity of $\cD$ as a 
$\g_\eev$-module. Hence, as $i\rho(X_0) \leq 0$, 
an argument as in the proof of Theorem~\ref{thm:hiweistr} 
implies that each simple submodule of $\cD$ is a unitary highest weight module, 
hence integrable by \cite[Cor.~XII.2.7]{Ne00}. 
We conclude in particular that the $\g_0$-representation on $\cD$  
is integrable with $\cD$ consisting of analytic vectors. 
\end{remark}


\section{The orbit method and nilpotent Lie supergroups}\label{sec:8}

One of the most elegant and powerful ideas in the theory of unitary representations of 
Lie groups since the early stages of its development is the \emph{orbit method}. 
The basic idea of the orbit method is to attach unitary representations to special homogeneous 
symplectic manifolds, such as the coadjoint orbits,  in a natural way. One of the goals of the 
orbit method is to obtain a concrete realization of the representation and to extract 
information about the representation (e.g., its distribution character)
from this realization.

Recall that a Lie supergroup $\mathcal G=(G,\germ g)$ is called \emph{nilpotent} when the 
Lie superalgebra $\germ g$ is nilpotent.
In this article the orbit method is only studied for nilpotent Lie supergroups.
It is known that among Lie groups, the orbit method works best for the class of nilpotent ones.
For further reading on the subject of the orbit method, the reader is
referred to \cite{kirillovbook} and \cite{voganarticle}.

\subsection{Quantization and polarizing subalgebras}
\label{classical-orbit-method}
All of the irreducible unitary representations
of nilpotent Lie groups can be classified by the orbit method. 
Let $G$ be a nilpotent real Lie group and $\germ g$ be its Lie algebra.
For simplicity, $G$ is assumed to be simply connected. In this case, there exists a bijective 
correspondence between coadjoint orbits (i.e., $G$-orbits in $\germ g^*$) and irreducible unitary
representations of $G$. In some sense the correspondence is surprisingly simple. To construct a 
representation $\pi_\mathcal O$ of $G$ which corresponds to a coadjoint orbit 
$\mathcal O\subseteq\germ g^*$, one first chooses 
an element $\lambda\in\mathcal O$ and considers the skew symmetric form 
\begin{equation}
\label{omega-equation}
\Omega_\lambda:\germ g\times\germ g\to\mathbb R
\textrm{\ \  defined by\ \  }\Omega_\lambda(X,Y)=\lambda([X,Y]).
\end{equation}
It can be shown that there exist maximal isotropic subspaces of 
$\Omega_\lambda$ which are also subalgebras of $\germ g$.
Such subalgebras are called \emph{polarizing subalgebras}. 
For a given polarizing subalgebra $\germ m$ of $\germ g$, one 
can consider the one dimensional representation of 
the subgroup $M=\exp(\germ m)$ of $G$ given by
\[
\chi_\lambda(m)=e^{i\lambda(\log(m))}\quad\textrm{ for }m\in M.
\]
The unitary representation of $G$ corresponding to $\mathcal O$ 
is $\pi_\mathcal O=\IND_M^G\chi_\lambda$. Of course one needs to prove that the construction
is independent of the choices of $\lambda$ and $\germ m$, the representation
$\pi_\mathcal O$ is irreducible, and the correspondence is bijective.
These statements are proved in \cite{kirillovpaper}. Many other proofs have been found as well.

\subsection{Heisenberg--Clifford Lie supergroups} Heisenberg groups play a distinguished role in the harmonic analysis of nilpotent 
Lie groups. Therefore 
it is
natural to expect that 
the analogues of Heisenberg groups in the category of Lie supergroups play a similar role 
in the representation theory of
nilpotent Lie supergroups. These analogues, which deserve to be called \emph{Heisenberg--Clifford} Lie supergroups,
can be described as follows. Let $(\mathsf W,\mathsf\Omega)$ be a finite dimensional real \emph{super symplectic} vector space. This means that
$\mathsf W=\mathsf W_\eev\oplus\mathsf  W_\ood$ is endowed with a bilinear form
\[
\mathsf\Omega:\mathsf W\times \mathsf W\to\mathbb R
\]
that satisfies the following properties.
\begin{description}[iiiiii]
\item[\rm(i)] $\mathsf\Omega(\mathsf W_\eev,\mathsf W_\ood)=
\mathsf\Omega(\mathsf W_\ood,\mathsf W_\eev)=\{0\}$.
\item[\rm(ii)] The restriction of $\mathsf\Omega$ to $\mathsf W_\eev$ is a symplectic form.
\item[\rm(iii)] The restriction of $\mathsf\Omega$ to $\mathsf W_\ood$ is a nondegenerate symmetric form.
\end{description}
The Heisenberg--Clifford Lie supergroup corresponding to $(\mathsf W,\mathsf \Omega)$ is 
the super Harish--Chandra pair $(H^\mathsf W,\germ h^\mathsf W)$ where

\begin{description}[iiiiii]
\item[\rm(i)] $\displaystyle\germ h^\mathsf W_\eev=\mathsf W_\eev\oplus \mathbb R$ and 
$\displaystyle \germ h^\mathsf W_\ood=\mathsf W_\ood\,$ (as vector spaces).
\item[\rm(ii)] for every $X,Y\in\mathsf W$ and every $a,b\in\mathbb R$,
the superbracket of $\germ h^\mathsf W$ is defined by
\[
[(X,a),(Y,b)]=(0,\mathsf\Omega(X,Y)).
\] 
\item[\rm(iii)] $H^{\mathsf W}$ is the simply connected Lie group 
with Lie algebra $\germ h^\mathsf W_\eev$.
\end{description}

When $\dim\mathsf W_\ood=0$ the Lie supergroup $(H^\mathsf W,\germ h^\mathsf W)$ is purely even, 
i.e., it is a Lie group. In this case, it is usually called a
\emph{Heisenberg Lie group}. When $\dim\mathsf W_\eev=0$ the Lie supergroup
 $(H^\mathsf W,\germ h^\mathsf W)$ is  
called a \emph{Clifford Lie supergroup}.

Irreducible unitary representations of Heisenberg Lie groups are
quite easy to classify. One can use the orbit method 
of Section \ref{classical-orbit-method} to classify them, but their classification was known
as a consequence of the Stone--von Neumann Theorem long before
the orbit method was developed. The Stone--von Neumann Theorem implies that
there exists a bijective correspondence between infinite dimensional 
irreducible unitary representations of a Heisenberg Lie group and nontrivial 
characters (i.e., one dimensional unitary representations) of its center.

For Heisenberg--Clifford Lie supergroups there is a similar classification of representations.
Let $(\pi,\rho^\pi,\mathscr H)$ be an irreducible unitary representation of 
$(H^\mathsf W,\germ h^\mathsf W)$.
By a super version of Schur's Lemma, for every $Z\in\mathscr Z(\germ h^\mathsf W)$ the action of 
$\rho^\pi(Z)$ is via multiplication by a scalar $c_{\rho^\pi}(Z)$.
If $c_{\rho^\pi}(Z)=0$ for every $Z\in\mathscr Z(\germ h^\mathsf W)$, then
$\mathscr H$ is one dimensional, and essentially obtained from a unitary character of $\mathsf W_\eev$. 
The irreducible unitary representations for which $\rho^\pi(\mathscr Z(\germ h^\mathsf W))\neq \{0\}$ 
are classified by the following statement 
(see \cite[Thm.~5.2.1]{salmasian}).

\begin{theorem}
\label{heiscliff}
Let $\mathsf S$ be the set of unitary equivalence classes of
irreducible unitary representations $(\pi,\rho^\pi,\mathscr H)$  
of $(H^\mathsf W,\germ h^\mathsf W)$ for which
$\rho^\pi(\mathscr Z(\germ h^\mathsf W))\neq \{0\}$. 
Then $\mathsf S$ is nonempty if and only if 
the restriction of $\mathsf\Omega$ to $\mathsf W_\ood$ is (positive or negative) definite.
Moreover, 
the map 
\[
[(\pi,\rho^\pi,\mathscr H)]\mapsto c_{\rho^\pi}
\]
yields a surjection from $\mathsf S$
onto the set of $\mathbb R$-linear functionals $\gamma:\mathscr Z(\germ h^\mathsf W)\to\mathbb R$ 
which satisfy
\[
i\gamma([X,X])<0\textrm{ for every } 0 \not= X\in\mathsf W_\ood.
\]
When $\dim\mathsf W_\ood$ is odd the latter map is a bijection, and when $\dim\mathsf W_\ood$ is even it is two-to-one, and the
two representations in the fiber are isomorphic via parity change.
\end{theorem}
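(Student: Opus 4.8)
The plan is to exploit the fact that in $\germ h^\mathsf W$ the even and odd parts commute modulo the center. Since $\mathsf\Omega(\mathsf W_\eev,\mathsf W_\ood)=\{0\}$, we have $[\mathsf W_\eev,\mathsf W_\ood]=\{0\}$, so $\germ h^\mathsf W$ is the central product, over $\mathscr Z(\germ h^\mathsf W)=\mathbb R Z_0$ with $Z_0=(0,1)$, of the ordinary Heisenberg algebra attached to $(\mathsf W_\eev,\mathsf\Omega|_{\mathsf W_\eev})$ and the Clifford Lie superalgebra attached to $(\mathsf W_\ood,\mathsf\Omega|_{\mathsf W_\ood})$. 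Accordingly, I would realize every class in $\mathsf S$ as a tensor product $\mathscr H=\mathscr H_0\otimes\mathscr S$ of a Stone--von Neumann representation $\mathscr H_0$ of the even Heisenberg group $H^\mathsf W$ with a finite dimensional graded spinor module $\mathscr S$, the two factors sharing the central character $\gamma=c_{\rho^\pi}$. The counting of fibers should then be reduced entirely to the graded representation theory of the Clifford factor, since the Heisenberg factor is unique.

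The necessity of definiteness comes first and uses Proposition \ref{reducedlemma}. For $(\pi,\rho^\pi,\mathscr H)\in\mathsf S$ the super version of Schur's Lemma forces $\rho^\pi(Z_0)$ to be a nonzero scalar $\gamma(Z_0)$, and Definition \ref{defofunirep}(iii) with $X=Y$ gives $\rho^\pi(X)^2=-\tfrac{i}{2}\rho^\pi([X,X])=-\tfrac{i}{2}\mathsf\Omega(X,X)\gamma(Z_0)$. Symmetry of $\rho^\pi(X)$ makes this scalar $\geq 0$, so $i\mathsf\Omega(X,X)\gamma(Z_0)\leq 0$ for all $X$. If $\mathsf\Omega|_{\mathsf W_\ood}$ were indefinite I could choose $X,Y\in\mathsf W_\ood$ with $\mathsf\Omega(X,X)=-\mathsf\Omega(Y,Y)\neq 0$, whence $[X,X]+[Y,Y]=0$; then Proposition \ref{reducedlemma} forces $\rho^\pi(X)=0$, and since $\mathsf\Omega(X,X)\neq 0$ this gives $\rho^\pi(Z_0)=\tfrac{2i}{\mathsf\Omega(X,X)}\rho^\pi(X)^2=0$, contradicting membership in $\mathsf S$. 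Hence $\mathsf\Omega|_{\mathsf W_\ood}$ must be definite, and the same computation applied to each $X\neq 0$ yields the sharp inequality $i\gamma([X,X])<0$, so that $c_{\rho^\pi}$ indeed lands in the asserted set of functionals.

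For the converse (existence), assuming $\mathsf\Omega|_{\mathsf W_\ood}$ definite and given any $\gamma$ with $i\gamma([X,X])<0$, I would take the essentially unique irreducible unitary representation $(\pi_0,\mathscr H_0)$ of the simply connected group $H^\mathsf W$ with the central character determined by $\gamma$ (Stone--von Neumann), together with the irreducible $*$-representation on a finite dimensional graded space $\mathscr S$ of the Clifford algebra of $\mathsf W_\ood$ for the positive definite form $(X,Y)\mapsto -i\gamma(Z_0)\mathsf\Omega(X,Y)$. Setting $\mathscr H=\mathscr H_0\otimes\mathscr S$ with grading inherited from $\mathscr S$, $\pi=\pi_0\otimes 1$, $\rho^\pi|_{\germ h^\mathsf W_\eev}=\mathsf d\pi$, and $\rho^\pi(X)=1\otimes\rho_{\mathscr S}(X)$ for $X\in\mathsf W_\ood$, I would verify Definition \ref{defofunirep}: symmetry of the odd operators is self-adjointness of the spinor generators, relation (iii) is the Clifford relation, and (vi) holds because $\mathrm{Ad}(g)$ acts trivially on $\mathsf W_\ood$ (again as $\mathsf\Omega(\mathsf W_\eev,\mathsf W_\ood)=\{0\}$), so $\rho^\pi(X)=1\otimes\rho_{\mathscr S}(X)$ commutes with $\pi(g)=\pi_0(g)\otimes 1$. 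The analytic subtleties around $\mathscr H^\infty$ are harmless here since $\mathscr S$ is finite dimensional.

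Classification then follows from uniqueness. For an irreducible $(\pi,\rho^\pi,\mathscr H)$ with central character $\gamma$, Stone--von Neumann identifies the $H^\mathsf W$-module with $\mathscr H_0\otimes\mathscr M$ for a graded multiplicity space $\mathscr M$; since the odd operators $\rho^\pi(X)$ commute with $\pi(H^\mathsf W)$ they descend to $\mathscr M$ and make it an irreducible graded unitary module over the Clifford algebra of $\mathsf W_\ood$. Surjectivity of $[(\pi,\rho^\pi,\mathscr H)]\mapsto c_{\rho^\pi}$ is exactly the existence step, and the size of each fiber is the number of such graded irreducibles. The main obstacle, and the only genuinely representation theoretic input, is this count: by complex Clifford periodicity modulo $2$ it depends only on $\dim\mathsf W_\ood\bmod 2$, giving a single graded irreducible when $\dim\mathsf W_\ood$ is odd (so the map is a bijection) and two graded irreducibles interchanged by parity reversal when $\dim\mathsf W_\ood$ is even (so the map is two-to-one). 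I expect the delicate point to be tracking the parity reversal precisely: one must confirm that for odd $\dim\mathsf W_\ood$ the parity-reversed module is evenly isomorphic to the original, so that no new equivalence class appears, whereas for even $\dim\mathsf W_\ood$ it is not, producing exactly the two parity-conjugate representations of the statement.
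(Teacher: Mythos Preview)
The paper does not actually prove this theorem: it is stated with a citation to \cite[Thm.~5.2.1]{salmasian} and no argument is given in the present article. So there is nothing to compare against here beyond noting that your approach is correct and is, in all likelihood, the same as the one in the cited reference.

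Your proof is sound. The decomposition of $\germ h^\mathsf W$ as a central product of the even Heisenberg algebra and the odd Clifford Lie superalgebra, followed by Stone--von~Neumann on the first factor and graded Clifford module theory on the second, is exactly the standard route. Two small remarks. First, the reason the tensor factorization $\mathscr H\cong\mathscr H_0\otimes\mathscr M$ goes through without analytic pain is that each $\rho^\pi(X)$ for $X\in\mathsf W_\ood$ is \emph{bounded}: its square is a scalar, so it extends from $\mathscr H^\infty$ to all of $\mathscr H$ and genuinely commutes with $\pi(H^\mathsf W)$, hence is of the form $1\otimes T_X$. You allude to this but it is the key technical point and worth stating explicitly. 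Second, there is a slight notational inconsistency in the paper itself regarding whether $c_{\rho^\pi}$ takes values in $\mathbb R$ or in $i\mathbb R$ (compare the statement of the theorem with the convention $\rho^\sigma(Z)=i\lambda(Z)$ used in Section~\ref{aconstruction}); your computation is consistent with the latter reading, under which the inequality $i\gamma([X,X])<0$ is a genuine real inequality.
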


Every irreducible unitary representation of a Clifford Lie supergroup is finite dimensional
(see \cite[Sec. 4.5]{salmasian}).
In fact the theory of Clifford modules implies that the only possible values for the dimension of such a representation
are one or 
\[
2^{\left(\dim \mathsf W_\ood\,-\,\lfloor\frac{\dim\mathsf W_\ood}{2}\rfloor\right)}.
\] 
It will be seen below that Clifford Lie supergroups are used to define analogues of polarizing subalgebras for Lie supergroups.

\subsection{Polarizing systems and a construction}
\label{aconstruction}

In order to construct the irreducible unitary representations of a nilpotent Lie supergroup 
using the orbit method, first we need to generalize the notion of polarizing subalgebras. 
What makes the case of Lie supergroups more complicated than the case of Lie groups is
the fact that irreducible unitary representations of nilpotent Lie supergroups
are not necessarily induced from one dimensional representations. However, it will be seen that
they are induced from certain finite dimensional representations
which are obtained from representations of Clifford Lie supergroups.

Let $(G,\germ g)$ be a Lie supergroup. Associated to
every $\lambda\in\germ g_\eev^*$ there exists a 
skew symmetric bilinear form $\Omega_\lambda$ on $\germ g_\eev$ which 
is defined in \eqref{omega-equation}.
There is also a symmetric bilinear form 
\begin{equation}
\label{defofomegalambda}
\mathsf\Omega_\lambda:\germ g_\ood\times\germ g_\ood\to\mathbb R
\end{equation}
associated to $\lambda$, which is defined by
$\mathsf\Omega_\lambda(X,Y)=\lambda([X,Y])$.

\begin{definition}
\label{defofpolarizingsystem}
Let $\mathcal G=(G,\germ g)$ be a nilpotent Lie supergroup. A polarizing system in $(G,\germ g)$ is a 
pair $(\mathcal M,\lambda)$ satisfying the following properties.
\begin{description}[iiiiii]
\item[\rm(i)] $\lambda\in\germ g_\eev^*$ and $\mathsf\Omega_\lambda$ is a positive semidefinite form.
\item[\rm(ii)] $\mathcal M=(M,\germ m)$ is a 
Lie subsupergroup of $\mathcal G$ and $\dim\germ m_\ood=\dim\germ g_\ood$.
\item[\rm(iii)] $\germ m_\eev$ is a polarizing subalgebra of $\germ g_\eev$ with respect to $\lambda$, i.e.,
a subalgebra of $\germ g_\eev$ which is also a maximal isotropic subspace with respect to $\Omega_\lambda$.
\end{description}
\end{definition}
Given a polarizing system $(\mathcal M,\lambda)$, one can construct a unitary representation of 
$\mathcal G$ as follows. Let 
\begin{equation}
\label{jdefined}
\germ j=\ker\lambda\oplus\mathrm{rad}(\mathsf\Omega_\lambda)
\end{equation}
where $\mathrm{rad}(\mathsf\Omega_\lambda)$
denotes the radical of $\mathsf\Omega_\lambda$. One can show that $\germ j$ is an ideal of $\germ m$ 
that corresponds to a Lie subsupergroup $\mathcal J=(J,\germ j)$ of $\mathcal M$,
and the quotient $\mathcal M/\mathcal J$ is a Clifford Lie supergroup.
Let $\mathscr Z(\germ m/\germ j)$ denote the center of $\germ m/\germ j$.
Since $\mathsf\Omega_\lambda$ is positive semidefinite, from Theorem \ref{heiscliff} it follows that
up to parity and unitary equivalence there exists a unique unitary representation 
$(\sigma,\rho^\sigma,\mathscr K)$ 
of $\mathcal M/\mathcal J$ such that 
for every $Z\in \mathscr Z(\germ m/\germ j)$, the operator $\rho^\sigma(Z)$ acts via multiplication by $i\lambda(Z)$.
Clearly $(\sigma,\rho^\sigma,\mathscr K)$ can also be thought of as a representation of $\mathcal M$, and
one can consider the induced representation
\begin{equation}
\label{piinduction}
(\pi,\rho^\pi,\mathscr H)=\IND_\mathcal M^\mathcal G(\sigma,\rho^\sigma,\mathscr K).
\end{equation}

\subsection{Existence of polarizing systems}
\label{existenceofpolarization}
Throughout this section $\mathcal G=(G,\germ g)$ will be a nilpotent Lie supergroup 
such that $G$ is simply connected.

It is natrual to ask for which
$\lambda\in\germ g_\eev^*$ such that  
$\mathsf\Omega_\lambda$ is positive semidefinite there exists a polarizing system 
$(\mathcal M,\lambda)$ in the sense of Definition \ref{defofpolarizingsystem}.
It turns out that for all such $\lambda$ the answer is affirmative. 
The latter statement can be proved as follows. 
Fix such a $\lambda\in\germ g_\eev^*$. Proving the existence of a polarizing system $(\mathcal M,\mathcal \lambda)$ 
amounts to showing that there exists a polarizing subalgebra $\germ m_\eev$ of $\germ g_\eev$ such that 
$\germ m_\eev\supseteq[\germ g_\ood,\germ g_\ood]$. Since $[\germ g_\ood,\germ g_\ood]$ is an ideal of
the Lie algebra $\germ g_\eev$ and $\germ g_\eev$ is nilpotent, one can find a sequence of ideal of $\germ g_\eev$
such as 
\[
\{0\}=\germ i^{(0)}\subseteq\germ i^{(1)}\subseteq\cdots\subseteq\germ i^{(k-1)}\subseteq
\germ i^{(k)}=
[\germ g_\ood,\germ g_\ood]
\subseteq\germ i^{k+1}\subseteq\cdots\subseteq\germ i^{(r)}=\germ g_\eev
\]
where for every $0\leq s\leq r-1$, the codimension of  $\germ i^{(s)}$ in $\germ i^{(s+1)}$ is equal to one.
For every $0\leq s\leq r-1$, let 
\[
\Omega_{\lambda}^{(s)}:\germ i^{(s)}\times\germ i^{(s)}\to\mathbb R
\]
be the skew symmetric form 
defined by 
\[
\Omega_{\lambda}^{(s)}(X,Y)=\lambda([X,Y])
\] and let $\mathrm{rad}(\Omega_\lambda^{(s)})$ denote the radical of 
$\Omega_\lambda^{(s)}$.
It is known that the subspace of $\germ g_\eev$ defined by
\[
\mathrm{rad}(\Omega_\lambda^{(1)})+\cdots+\mathrm{rad}(\Omega_\lambda^{(s)})
\]
is a polarizing subspace of $\germ g_\eev$ corresponding to $\lambda$ (see \cite[Th. 1.3.5]{vergnelemmacorwingreenleaf}).
To prove that
\[
[\germ g_\ood,\germ g_\ood]\subseteq\mathrm{rad}(\Omega_\lambda^{(1)})+\cdots+\mathrm{rad}(\Omega_\lambda^{(s)})
\]
it suffices to show that $\mathrm{rad}(\Omega_\lambda^{(k)})=\germ i^{(k)}$.
This is where one needs the fact that $\mathsf\Omega_\lambda$ is positive semidefinite.
The proof is by a backward induction on the dimension of $\mathcal G$. Details of the argument appear in
\cite[Sec. 6.3]{salmasian}.

\subsection{A bijective correspondence}
\label{bijectivesection}
Throughout this section $\mathcal G=(G,\germ g)$ will be a nilpotent Lie supergroup 
such that $G$ is simply connected.

One can check easily that the set
\[
\mathscr P(\mathcal G)=\big\{\ \lambda\in\germ g_\eev^*\ \big|\ \mathsf\Omega_\lambda\textrm{ is positive semidefinite}\ \big\}
\]
is an invariant cone in $\germ g_\eev^*$.
Section \ref{existenceofpolarization} shows that for every 
$\lambda\in\mathscr P(\mathcal G)$ 
one can find
a polarizing system $(\mathcal M,\lambda)$. Therefore the construction of Section \ref{aconstruction} 
yields a unitary representation $(\pi_\lambda,\rho^{\pi_\lambda},\mathscr H_\lambda)$ of 
$\mathcal G$ which is given by \eqref{piinduction}.
The main result of \cite{salmasian} can be stated as follows.
\begin{theorem}
\label{mainnilp}
The map which takes a $\lambda\in\mathscr P(\mathcal G)$ to the representation 
$(\pi_\lambda,\rho^{\pi_\lambda},\mathscr H_\lambda)$ 
results in a bijective correspondence between $G$-orbits in 
$\mathscr P(\mathcal G)$ and irreducible unitary representations of $\mathcal G$ up to unitary equivalence and 
parity change.
\end{theorem}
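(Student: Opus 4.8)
The plan is to adapt the classical Kirillov orbit method for nilpotent Lie groups (\cite{kirillovpaper}) to the super setting, replacing the Stone--von Neumann theorem by the Heisenberg--Clifford classification of Theorem~\ref{heiscliff} and proceeding by induction on $\dim\germ g$. Since $\mathscr P(\mathcal G)$ is an $\mathrm{Ad}^*(G)$-invariant cone, the $G$-orbits in it are well defined, and Section~\ref{existenceofpolarization} guarantees that the assignment $\lambda\mapsto(\pi_\lambda,\rho^{\pi_\lambda},\mathscr H_\lambda)$ is defined on all of $\mathscr P(\mathcal G)$.

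First I would check that the map is well defined, i.e.\ independent of the chosen polarizing system $(\mathcal M,\lambda)$. Because $\germ m_\ood\subseteq\germ g_\ood$ and $\dim\germ m_\ood=\dim\germ g_\ood$ by Definition~\ref{defofpolarizingsystem}(ii), every polarizing system has $\germ m_\ood=\germ g_\ood$; hence the ambiguity lies entirely in the even polarizing subalgebra $\germ m_\eev$. There the independence is the classical Pukanszky--Kirillov fact for nilpotent Lie groups, which I would transport through the construction \eqref{piinduction} using induction in stages (\cite[Prop.~3.2.1]{salmasian}) together with the uniqueness clause of Theorem~\ref{heiscliff} applied to the Clifford quotient $\mathcal M/\mathcal J$. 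Orbit-invariance then follows formally: conjugating a polarizing system for $\lambda$ by $g\in G$ yields one for $\mathrm{Ad}^*(g)\lambda$, and left translation by $g$ intertwines the two induced representations, so $\pi_{\mathrm{Ad}^*(g)\lambda}\simeq\pi_\lambda$ and the map descends to $G$-orbits.

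The core of the argument is a single induction on $\dim\germ g$ delivering irreducibility, injectivity and surjectivity at once. As $\germ g$ is nilpotent we may pick $0\neq Z\in\mathscr Z(\germ g)_\eev$; in any irreducible representation Schur's lemma forces $\rho^\pi(Z)=i\ell\cdot\mathrm{id}$ for some $\ell\in\mathbb R$. If $\ell=0$ the representation factors through the smaller nilpotent supergroup $\mathcal G/\exp(\mathbb R Z)$ and I would invoke the inductive hypothesis after checking that $\mathscr P$ and its orbits pull back correctly under the quotient. If $\ell\neq0$ I would peel off a Heisenberg pair on the even side, or a Clifford factor on the odd side, via Theorem~\ref{heiscliff}, and realize $\pi_\lambda$ as induced from a strictly smaller subsupergroup through a super Mackey tensor-product mechanism, lowering the dimension and closing the induction. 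For surjectivity one starts from an arbitrary irreducible $(\pi,\rho^\pi,\mathscr H)$, reads off the scalars by which $\mathscr Z(\germ g)_\eev$ acts to produce a candidate $\lambda$, notes that the dissipativity relation \eqref{eq:dissi} forces $\mathsf\Omega_\lambda\geq0$ so that $\lambda\in\mathscr P(\mathcal G)$, and then matches $\pi$ with $\pi_\lambda$ through the same reduction; injectivity emerges because equivalent representations force the inducing data, hence the orbit of $\lambda$, to coincide. The earlier vanishing result $\rho^\pi([\germ g_\ood,[\germ g_\ood,\germ g_\ood]])=\{0\}$ is what guarantees that the odd contribution is genuinely of Clifford type, so that Theorem~\ref{heiscliff} applies verbatim; its two-to-one behaviour when $\dim\mathsf W_\ood$ is even is exactly the parity-change ambiguity in the statement.

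I expect the principal obstacle to be the inductive step when $\ell\neq0$: one must build a super analogue of the Mackey machine that is compatible with the fact that super-induction is defined only along purely even homogeneous spaces, i.e.\ the case $\dim\germ m_\ood=\dim\germ g_\ood$. The delicate point is to organize every reduction so that the full odd part $\germ g_\ood$ remains inside the inducing subsupergroup at each stage, keeping each fibration purely even, while simultaneously tracking the Clifford central character through Theorem~\ref{heiscliff}. Coordinating the even Heisenberg reduction with the odd Clifford structure, and verifying that the resulting induced representation is precisely the one attached to the orbit, is where the substantial work lies; the complete details are carried out in \cite[Sec.~6]{salmasian}.
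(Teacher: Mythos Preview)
Your outline is in the right spirit and matches the paper's overall strategy: Kirillov-style induction on $\dim\germ g$, with Theorem~\ref{heiscliff} playing the role of the Stone--von Neumann theorem. However, two points deserve comment.

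First, you propose to handle the inductive step ``through a super Mackey tensor-product mechanism.'' The paper explicitly warns that this does not go through: to prove irreducibility of $(\pi_\lambda,\rho^{\pi_\lambda},\mathscr H_\lambda)$ one cannot use Mackey theory, and new ideas are needed. The analytic machinery for Lie supergroups is weaker than for Lie groups, so the classical Mackey arguments do not transplant directly; this is precisely where the substantial technical work in \cite[Sec.~6]{salmasian} lies.

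Second, your inductive step (``pick $Z\in\mathscr Z(\germ g)_\eev$, and if $\ell\neq0$ peel off a Heisenberg pair or a Clifford factor'') presupposes a dichotomy that is not automatic. The paper's route is to first use Proposition~\ref{reducedlemma} to pass to a quotient in which no nonzero $X\in\germ g_\ood$ satisfies $[X,X]=0$; only under this hypothesis does Proposition~\ref{kirillovlmma} guarantee that either $\mathcal G$ is a Clifford Lie supergroup or it contains a $(3|0)$-dimensional Heisenberg subsupergroup, which is what drives the induction. Your invocation of the vanishing of $\rho^\pi([\germ g_\ood,[\germ g_\ood,\germ g_\ood]])$ is related but not the same reduction; the sharper condition of no self-commuting odd elements is what is actually used. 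Without this preliminary step your dichotomy is not justified, and indeed $\mathscr Z(\germ g)_\eev$ need not be nonzero before the reduction.
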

To prove Theorem \ref{mainnilp} one needs to show that the construction given in Section \ref{aconstruction} 
yields an irreducible
representation and is independent of the choice of $\lambda$ in a $G$-orbit or the polarizing system. One also has to show that
if $\lambda$ and $\lambda'$ are not in the same $G$-orbit then inducing from 
polarizing systems $(\mathcal M,\lambda)$ and $(\mathcal M',\lambda')$
does not lead to representations which are identical up to parity or unitary equivalence. The proofs of all of these facts are 
given in \cite[Sec. 6]{salmasian}. To some extent, the method of proof is similar to the original proof of the Lie group 
case in \cite{kirillovpaper}, where induction on the dimension is used. In the Lie group case, what makes the inductive argument work 
is the existence of three dimensional Heisenberg subgroups in any nilpotent Lie group
of dimension bigger than one with one dimensional center.  
For Lie supergroups a similar statement only holds under extra assumptions.
The next proposition shows that it suffices to assume that the corresponding 
Lie superalgebra has no self-commuting odd elements. 
\begin{proposition}
\label{kirillovlmma}
Let $\mathcal G=(G,\germ g)$ be as above. Assume that
there are no nonzero $X\in\germ g_\ood$
such that $[X,X]=0$. If $\dim \mathscr Z(\germ g)=1$
then either $\mathcal G$ is a Clifford Lie supergroup, 
or it has a Heisenberg Lie subsupergroup of
dimension $(3|0)$.
\end{proposition}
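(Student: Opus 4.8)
The plan is to push everything down to the even part $\germ g_\eev$, which is a nilpotent Lie algebra, and to play the nilpotency of the adjoint action against the hypothesis that $[X,X]\neq 0$ for every nonzero $X\in\germ g_\ood$. First I would pin down the center: since the supercommutant is $\mathbb Z_2$-graded, $\mathscr Z(\germ g)$ is graded, and any nonzero odd central element $Z$ would satisfy $[Z,Z]=0$, which is excluded; hence $\mathscr Z(\germ g)\subseteq\germ g_\eev$, and $\dim\mathscr Z(\germ g)=1$ gives $\mathscr Z(\germ g)=\mathbb R Z$ with $Z$ even. I would then split into two cases according to whether $\germ g_\eev$ is abelian.

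Suppose first that $[\germ g_\eev,\germ g_\eev]\neq\{0\}$. Here I claim $\germ g_\eev$ already contains a three-dimensional Heisenberg subalgebra, by the standard argument for nilpotent Lie algebras: letting $m$ be maximal with $\germ g_\eev^{(m)}\neq\{0\}$ in the lower central series, one has $m\geq 2$ and $\germ g_\eev^{(m)}\subseteq\mathscr Z(\germ g_\eev)$; since $\germ g_\eev^{(m)}=[\germ g_\eev,\germ g_\eev^{(m-1)}]$ is nonzero, there exist $A\in\germ g_\eev$ and $D\in\germ g_\eev^{(m-1)}$ with $C:=[A,D]\neq 0$, and then $A,D,C$ span a Heisenberg subalgebra because $C$ is central in $\germ g_\eev$. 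Being purely even this is a Lie subsuperalgebra, and since $G$ is simply connected it integrates to a Heisenberg Lie subsupergroup of dimension $(3|0)$.

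Now suppose $\germ g_\eev$ is abelian; I would show that $\germ g$ must then be Clifford, i.e.\ $\germ g_\eev=\mathbb R Z$. Fix $W\in\germ g_\eev$ and set $N=\mathrm{ad}(W)|_{\germ g_\ood}$, which is nilpotent because $\germ g$ is nilpotent. Applying the super Jacobi identity to $W\in\germ g_\eev$ and $X,Y\in\germ g_\ood$ and using $[\germ g_\eev,\germ g_\eev]=\{0\}$ gives $[NX,Y]+[X,NY]=0$, so $N$ is skew-symmetric for the symmetric $\germ g_\eev$-valued form $B(X,Y)=[X,Y]$. If $N\neq 0$, let $k$ be its index of nilpotency and pick $X$ with $V:=N^{k-1}X\neq 0$; then
\[
[V,V]=B(N^{k-1}X,N^{k-1}X)=-B(N^{k-2}X,N^{k}X)=0,
\]
contradicting the hypothesis on odd elements. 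Hence $N=0$ for every $W$, so $[\germ g_\eev,\germ g_\ood]=\{0\}$; together with abelianness this forces $\germ g_\eev\subseteq\mathscr Z(\germ g)=\mathbb R Z$, whence $\germ g_\eev=\mathbb R Z$ and $\germ g$ is a Clifford Lie supergroup (the odd bracket, valued in $\mathbb R Z$, is symmetric and nondegenerate, since its radical would be central).

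The main obstacle is the abelian case: the crux is to turn the two a priori unrelated facts\,---\,nilpotency of $\mathrm{ad}(W)$ on $\germ g_\ood$ and its skew-symmetry with respect to the odd bracket\,---\,into the existence of a self-commuting odd vector, which the hypothesis forbids. Choosing $V$ at the top of a Jordan chain for $N$ is precisely what makes these two facts collide, and deriving the skew-symmetry relation carefully from the graded Jacobi identity is the step most prone to sign errors.
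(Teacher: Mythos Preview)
The paper does not actually supply a proof of this proposition; it is stated in the expository Section~\ref{sec:8} with the remark that complete proofs are in \cite{salmasian}. So there is no in-paper argument to compare against, and the relevant question is simply whether your proof is correct. It is.

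Your case split on whether $\germ g_\eev$ is abelian is the natural one. In the non-abelian case the Heisenberg triple you extract from the last nonzero term of the lower central series of $\germ g_\eev$ is standard and correct; since $G$ is simply connected nilpotent, the subalgebra integrates to a closed $(3|0)$-dimensional subsupergroup. In the abelian case your key identity $[NX,Y]+[X,NY]=0$ is exactly what the graded Jacobi identity gives (with $W$ even, $|W||X|=0$), and the telescoping step $B(N^{k-1}X,N^{k-1}X)=-B(N^{k-2}X,N^kX)=0$ is valid even though $B$ is $\germ g_\eev$-valued rather than scalar-valued. The conclusion $\germ g_\eev\subseteq\mathscr Z(\germ g)=\mathbb R Z$ then forces the Clifford structure, and your remark that the radical of the odd bracket would be odd central (hence zero by the hypothesis on $[X,X]$) gives the required nondegeneracy.

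One cosmetic point: you might say explicitly that in Case~1 the element $C=[A,D]$ is linearly independent of $A,D$ (it is central in $\germ g_\eev$ while $A,D$ cannot both be, else $C=0$), so that the span really is three-dimensional. Otherwise the argument is complete.
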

Using 
Proposition \ref{reducedlemma} one can pass to a quotient and reduce the analysis of
the general case to the case 
where the assumptions of Proposition \ref{kirillovlmma} are satisfied. 
Proposition \ref{kirillovlmma}  makes induction on the dimension of 
$\germ g$ possible. 

Although the proof of Theorem \ref{mainnilp} 
is insipred by the methods and arguments in \cite{kirillovpaper} 
and \cite{vergnelemmacorwingreenleaf},
one must tackle numerous additional analytic technical difficulties which
emerge in the case of Lie supergroups. This is because many facts in the theory of unitary representations 
of Lie supergroups are generally not as powerful as 
their analogues for Lie groups. For instance to 
prove that $(\pi_\lambda,\rho^{\pi_\lambda},\mathscr H_\lambda)$ is irreducible one 
cannot use Mackey theory and needs new ideas.

\subsection{Branching to the even part}
Let $\mathcal G=(G,\germ g)$ be as in Section \ref{bijectivesection}.
For every $\lambda\in\mathscr P(\mathcal G)$ let  
$(\pi_\lambda,\rho^{\pi_\lambda},\mathscr H_\lambda)$
be the representation of $\mathcal G$ associated to $\lambda$ in Section \ref{bijectivesection}.
As an application of Theorem \ref{mainnilp} one can obtain a simple decomposition formula for the restriction of 
$(\pi_\lambda,\rho^{\pi_\lambda},\mathscr H_\lambda)$  to $G$.

Recall that  
$
(\pi_\lambda,\rho^{\pi_\lambda},\mathscr H_\lambda)
$ 
is induced from a polarizing system $(\mathcal M,\lambda)$.
Let $\germ m$ be the Lie superalgebra of $\mathcal M$ and 
$\germ j$ be defined as in \eqref{jdefined}.
\begin{corollary}
The representation $(\pi_\lambda,\mathscr H_\lambda)$
of $G$ decomposes into a direct sum of $2^{\dim\germ m-\dim\germ j}$ copies of the irreducible unitary representation
of $G$ which is associated to the coadjoint orbit containing $\lambda$ (in the sense of 
Section \ref{classical-orbit-method}). 
\end{corollary}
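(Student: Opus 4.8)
The plan is to recognize the restriction $(\pi_\lambda,\mathscr H_\lambda)\big|_G$ as an ordinary unitarily induced representation of the Lie group $G$ and then to invoke Kirillov theory. First I would use that the polarizing system $(\mathcal M,\lambda)$ satisfies $\dim\germ m_\ood=\dim\germ g_\ood$, so that the homogeneous space $\mathcal M\backslash\mathcal G$ is purely even. By the very construction of $\IND_\mathcal M^\mathcal G$, the space $\mathscr H_\lambda$ consists of the $\mathscr K$-valued functions $f$ on $G$ with $f(mg)=\sigma(m)f(g)$ and $\int_{M\backslash G}\|f\|^2\,d\mu<\infty$, on which $G$ acts by the right regular action. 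Forgetting the $\mathbb Z_2$-grading and the odd action $\rho^{\pi_\lambda}$, this is precisely the classical induced representation $\IND_M^G(\sigma\big|_M)$, where $\sigma\big|_M$ is the underlying unitary representation of the even Lie group $M$ on the full graded space $\mathscr K$.

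Next I would determine $\sigma\big|_M$ explicitly. Since $(\sigma,\rho^\sigma,\mathscr K)$ factors through the Clifford Lie supergroup $\mathcal M/\mathcal J$, and the even part $(\germ m/\germ j)_\eev=\germ m_\eev/\ker\lambda$ of this quotient is its one-dimensional center, the even Lie group $M/J$ is one-dimensional and central, acting on $\mathscr K$ through the scalar prescribed in the construction, namely $\rho^\sigma(Z)=i\lambda(Z)\,\mathrm{id}_{\mathscr K}$ for a central $Z$. Exponentiating, $M$ acts on $\mathscr K$ by $\chi_\lambda(m)=e^{i\lambda(\log m)}$ times the identity, so that as an $M$-module $\sigma\big|_M\cong\chi_\lambda^{\oplus\dim\mathscr K}$; here $\chi_\lambda$ is a genuine character of $M$ because $\germ m_\eev$ is isotropic for $\Omega_\lambda$, whence $\lambda$ vanishes on $[\germ m_\eev,\germ m_\eev]$.

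Finally, since unitary induction commutes with finite orthogonal direct sums, I obtain
\[
(\pi_\lambda,\mathscr H_\lambda)\big|_G\;\cong\;\IND_M^G\bigl(\chi_\lambda^{\oplus\dim\mathscr K}\bigr)\;\cong\;\bigl(\IND_M^G\chi_\lambda\bigr)^{\oplus\dim\mathscr K}.
\]
By the classical orbit method for simply connected nilpotent Lie groups (Section~\ref{classical-orbit-method}), the fact that $\germ m_\eev$ is a polarizing subalgebra for $\lambda$ guarantees that $\IND_M^G\chi_\lambda$ is irreducible and coincides with the representation $\pi_{\mathcal O}$ attached to the coadjoint orbit $\mathcal O=G\cdot\lambda$. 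Hence the restriction is a direct sum of $\dim\mathscr K$ copies of $\pi_{\mathcal O}$, and it only remains to identify the multiplicity $\dim\mathscr K$ with $2^{\dim\germ m-\dim\germ j}$ by means of the dimension count for irreducible representations of Clifford Lie supergroups recalled in the discussion preceding Theorem~\ref{heiscliff}. The main obstacle is the bookkeeping in the first two steps: verifying carefully that super-induction followed by restriction to $G$ really reduces to ordinary Mackey induction of $\sigma\big|_M$, and that $\sigma\big|_M$ acts by a scalar on $\mathscr K$. Once these compatibilities are in place, together with the Kirillov identification of $\IND_M^G\chi_\lambda$, the stated decomposition follows at once.
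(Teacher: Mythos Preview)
The paper states this corollary without proof, so there is no argument to compare against. Your strategy is the natural one and the bulk of it is correct: by construction the underlying $G$-representation of $\IND_{\mathcal M}^{\mathcal G}(\sigma,\rho^\sigma,\mathscr K)$ is literally the ordinary induced representation $\IND_M^G(\sigma\big|_M)$; the even group $M$ acts on $\mathscr K$ through the one-dimensional quotient $M/J$ by the character $\chi_\lambda$, so $\sigma\big|_M\cong\chi_\lambda^{\oplus\dim\mathscr K}$; and Kirillov's theorem identifies $\IND_M^G\chi_\lambda$ with the irreducible representation attached to $G\cdot\lambda$ because $\mathfrak m_\eev$ is polarizing. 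This yields the multiplicity $\dim\mathscr K$, exactly as you say.

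The gap is in your final sentence, where you assert that the Clifford dimension count identifies $\dim\mathscr K$ with $2^{\dim\mathfrak m-\dim\mathfrak j}$. Set $n=\dim(\mathfrak m/\mathfrak j)_\ood=\dim\mathfrak g_\ood-\dim\mathrm{rad}(\mathsf\Omega_\lambda)$. When $\lambda\big|_{\mathfrak m_\eev}\neq 0$ the even part $(\mathfrak m/\mathfrak j)_\eev$ is one-dimensional, so $\dim\mathfrak m-\dim\mathfrak j=n+1$; on the other hand the formula recorded in the paper for the irreducible Clifford module gives $\dim\mathscr K=2^{\,n-\lfloor n/2\rfloor}=2^{\lceil n/2\rceil}$. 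These do not match: already for $n=1$ (a Clifford supergroup with a single odd generator) one has $\dim\mathscr K=2$ while $2^{\dim\mathfrak m-\dim\mathfrak j}=4$. So the identification you invoke cannot be made, and what your argument actually proves is that the multiplicity equals $2^{\lceil n/2\rceil}$. The exponent $\dim\mathfrak m-\dim\mathfrak j$ in the corollary as printed appears to be a slip; your proof is otherwise sound.
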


\section{Conclusion} 

In this note we discussed irreducible unitary representations 
of Lie supergroups in some detail for the case where 
$\cG$ is either nilpotent or $\g$ is $\star$-reduced and 
satisfies $\g_\eev =[\g_\ood, \g_\ood]$. The overlap between 
these two classes is quite small because for any nilpotent Lie superalgebra 
satisfying the latter conditions $\g_\eev$ is central, so that 
it essentially is a Clifford--Lie superalgebra, 
possibly with a multidimensional center, and in this case 
the irreducible unitary representations are the well-known 
spin representations. Precisely these representations occur 
as the $\ft$-modules on the highest weight space $\sV$ in the other 
case. 

Clearly, the condition of being $\star$-reduced is natural 
if one is interested in unitary representations. 
The requirement that $\g_\eev =[\g_\ood, \g_\ood]$ is more 
serious, as we have seen in the nilpotent case. In general 
one can consider the ideal
$\g_c = [\g_\ood, \g_\ood]\oplus\g_\ood$ 
and our results show that the irreducible unitary representations 
of this ideal are highest weight representations.
For nilpotent Lie supergroups, how to use them 
to parametrize the irreducible unitary representations 
of $\cG$ was explained in 
Section~\ref{sec:8}. It is conceivable that other larger 
classes of groups could be studied by combining 
tools from the Orbit Method, induction procedures and 
highest weight theory.

\begin{table}

\renewcommand{\arraystretch}{1.3}
\scriptsize
\begin{tabular}{|c|ll|l|}
\hline
\multicolumn{1}{|c|}{$\germ s\otimes_\mathbb R \mathbb C$} & \multicolumn{2}{c|}{ $ \germ s$} &
\multicolumn{1}{c|}{$\germ s_\eev/\mathrm{rad}(\germ s_\eev)$}\\
\hline
\multirow{3}{2cm}{\begin{center}$\mathbf A(m-1|n-1)$ \\	$m>n>1$\end{center} 	 }
					& $\germ{su}(p,q|r,s)$		& ($p+q=m,r+s=n$)		& $\germ{su}(p,q)\oplus\germ{su}(r,s)$\\
					& $\germ{su}^*(2p|2q)$ 			&  ($m = 2p$, $n= 2q$ even)	&  $\germ{su}^*(2p)\oplus\germ{su}^*(2q)$\\
					& $\germ{sl}(m|n,\mathbb R)$	&						& $\germ{sl}(m,\mathbb R)\oplus\germ{sl}(n,\mathbb R)$\\
\hline
\multirow{5}{2cm}{\begin{center}$\mathbf A(m-1|m-1)$\\	$n>1$\end{center} }
		& 	$\germ{psu}(p,q|r,s)$		& $(p+q=r+s=m)$			& $\germ{su}(p,q)\oplus\germ{su}(r,s)$\\
		& $\germ{psu}^*(2p|2p)$ 			&  ($m=2p$ even)			&  $\germ{su}^*(2p)\oplus\germ{su}^*(2p)$\\
		& $\germ{psl}(m|m,\mathbb R)$	&						& $\germ{sl}(m,\mathbb R)\oplus\germ{sl}(m,\mathbb R)$\\
		& $\germ{p\overline{q}}(m)$ 	&		  				& $\germ{sl}(m,\mathbb C)$\\
		& $\germ{usp}(m)$ 				&  						& $\germ{sl}(m,\mathbb C)$\\


\hline
\multirow{1}{2cm}[5pt]{\begin{center}$\germ{osp}(m|2n,\C)$\end{center}}  
						& $\germ{osp}(p,q|2n)$  		&  ($p+q=2m+1$)						& $\germ{so}(p,q)\oplus\germ{sp}(2n,\mathbb R)$\\
				& 	$\germ{osp}^*(m|p,q)$ 	& ($p+q=n$)	& $\germ{so}^*(m)\oplus\germ{sp}(p,q)$\\
\hline
\multirow{5}{2cm}{\begin{center}$\mathbf D(2\,|\,1,\alpha)$\\[3mm] $\alpha=\overline\alpha$ \ \ or\\ $\alpha=-1-\overline\alpha$\end{center}}
								 & $D(2|1,\alpha,2)$ 			 		&  $\alpha\in\mathbb R$ 
								 	& $\germ{sl}(2,\mathbb R)\oplus\germ{sl}(2,\mathbb R)\oplus\germ{sl}(2,\mathbb R)$\\
								 &$D(2|1,\alpha,0)$ 			 		& $\alpha\in\mathbb R$
								 	& $\germ{sl}(2,\mathbb R)\oplus\germ{su}(2)\oplus\germ{su}(2)$\\
								 &$D(2|1,\frac{1}{\alpha},0)$	 		& $\alpha\in\mathbb R$ 
								 	& $\germ{sl}(2,\mathbb R)\oplus\germ{su}(2)\oplus\germ{su}(2)$\\
								 &$D(2|1,-\frac{\alpha}{1+\alpha},0)$   & $\alpha\in\mathbb R$ 
								 	& $\germ{sl}(2,\mathbb R)\oplus\germ{su}(2)\oplus\germ{su}(2)$\\
								 &$D(2|1,\alpha,1)$  					& $\alpha=-1-\overline\alpha$
								 	& $\germ{sl}(2,\mathbb R)\oplus\germ{sl}(2,\mathbb C)$\\
\hline
\multirow{4}{2cm}{\begin{center}$\mathbf F(4)$\end{center}} 
			   & $\mathbf F(4,0)$& & $\germ{sl}(2,\mathbb R)\oplus\germ{so}(7)$\\
			   & $\mathbf F(4,1)$& & $\germ{su}(2)\oplus\germ{so}(1,6)$\\
			   & $\mathbf F(4,2)$& & $\germ{su}(2)\oplus\germ{so}(2,5)$\\
               & $\mathbf F(4,3)$& & $\germ{sl}(2,\mathbb R)\oplus\germ{so}(3,4)$\\
\hline
\multirow{2}{2cm}[5pt]{\begin{center}$\mathbf G(3)$\end{center}} 	
				& $\mathbf G(3,1)$& & $\germ{sl}(2,\mathbb R)\oplus\mathrm{Der}_\mathbb R(\mathbb O)$\\
			   	& $\mathbf G(3,2)$& & $\germ{sl}(2,\mathbb R)\oplus\mathrm{Der}_\mathbb R(
			   	\mathbb O_\mathsf{split})$\\
\hline
\multirow{2}{2cm}[5pt]{\begin{center}$\mathbf P(n-1)$\end{center}}
				 			 & $\germ{sp}(n,\mathbb R)$ 	& 				& 
				 			 $\germ{sl}(n,\mathbb R)$\\
				 			 & $\germ{sp}^*(n)$ 			& ($n$ even)   	& $\germ{su}^*(n)$\\
\hline
\multirow{3}{2cm}{\begin{center}$\mathbf Q(n-1)$\end{center}}  	
					& $\germ{psq}(n,\mathbb R)$ 				&				 & $\germ{sl}(n,\mathbb R)$\\
			  	  	& $\germ{psq}(p,q)$ 						& ($p+q=n$) 	 & $\germ{su}(p,q)$\\
			  		&  $\germ{psq}^*(n)$ 					& ($n$ even) 	 & $\germ{su}^*(n)$\\
\hline
$\mathbf W(n)$	
					 & $\mathbf W(n,\mathbb R)$& &$\germ{gl}(n,\mathbb R)$\\

\hline
$\mathbf S(n)$
						& $\mathbf S(n,\mathbb R)$ & &$\germ{sl}(n,\mathbb R)$\\
\hline
$\tilde{\mathbf S}(n)$\ \ \  $n$ even
										 & $\tilde{\mathbf S}(n,\mathbb R)$& &$\germ{sl}(n,\mathbb R)$ \\
\hline
$\mathbf H(n)$
							 & $\mathbf H(p,q)$&($p+q=n$) & $\germ{so}(p,q)$\\
							 
\hline
\end{tabular}
\vspace{1mm}

\caption{Simple real Lie superalgebras with nontrivial odd part}
\end{table}

\end{document}